\documentclass[11pt,twoside,reqno]{amsart}
\usepackage[utf8]{inputenc}
\usepackage[T1]{fontenc}
\usepackage{import}
\usepackage{newclude}
\usepackage{amsmath,amssymb,amsthm,mathtools}
\usepackage[toc,page]{appendix}
\usepackage{longtable,booktabs,array}
\usepackage{anysize}
\usepackage{amscd}
\usepackage{stmaryrd}
\usepackage{wasysym}
\usepackage{mathrsfs}
\usepackage[scr=boondox]{mathalfa}
\usepackage{enumitem}
\usepackage{accents}
\usepackage{dsfont}
\usepackage{soul}
\usepackage{color}
\usepackage[all]{xy}
\usepackage{float}
\usepackage{bm}
\usepackage{makecell}
\usepackage{thmtools}      
\usepackage{setspace}
\usepackage{comment}
\usepackage{tikz}
\usepackage{tikz-cd}
\usepackage{mathdots}
\usepackage{mathtools}
\usepackage{graphicx}
\usepackage[myheadings]{fullpage}
\usepackage{bbm}

\usepackage[hypertexnames=false,colorlinks,linkcolor=black,citecolor=black]{hyperref}
\usepackage[capitalize,nameinlink]{cleveref}

\crefname{appendix}{Appendix}{Appendices}
\Crefname{appendix}{Appendix}{Appendices}

\numberwithin{equation}{section}
\newcommand\mtop{.95in}
\newcommand\mbottom{.95in}
\newcommand\mleft{1in}
\newcommand\mright{1in}
\usepackage[top = \mtop, bottom = \mbottom, left = \mleft, right=\mright]{geometry}

\DeclareMathOperator{\Mat}{Mat}

\newtheorem{thm}{Theorem}[section]

\newtheorem{prop}[thm]{Proposition}

\newtheorem{lemma}[thm]{Lemma}
\newtheorem{conj}[thm]{Conjecture}

\theoremstyle{definition}
\newtheorem{defi}[thm]{Definition}

\newtheorem{rmk}[thm]{Remark}

\newcommand\reallywidehat[1]{%
\savestack{\tmpbox}{\stretchto{%
  \scaleto{%
    \scalerel*[\widthof{\ensuremath{#1}}]{\kern-.6pt\bigwedge\kern-.6pt}%
    {\rule[-\textheight/2]{1ex}{\textheight}}
  }{\textheight}%
}{0.5ex}}%
\stackon[1pt]{#1}{\tmpbox}%
}
\DeclareSymbolFont{bbold}{U}{bbold}{m}{n}
\DeclareSymbolFontAlphabet{\mathbbold}{bbold}

\makeatletter
\def\@tocline#1#2#3#4#5#6#7{\relax
  \ifnum #1>\c@tocdepth 
  \else
    \par \addpenalty\@secpenalty\addvspace{#2}%
    \begingroup \hyphenpenalty\@M
    \@ifempty{#4}{%
      \@tempdima\csname r@tocindent\number#1\endcsname\relax
    }{%
      \@tempdima#4\relax
    }%
    \parindent\z@ \leftskip#3\relax \advance\leftskip\@tempdima\relax
    \rightskip\@pnumwidth plus4em \parfillskip-\@pnumwidth
    #5\leavevmode\hskip-\@tempdima
      \ifcase #1
       \or\or \hskip 1em \or \hskip 2em \else \hskip 3em \fi%
      #6\nobreak\relax
    \hfill\hbox to\@pnumwidth{\@tocpagenum{#7}}\par
    \nobreak
    \endgroup
  \fi}
\makeatother

\newcommand{\R}{\mathbb{R}}

\newcommand{\Z}{\mathbb{Z}}

\newcommand{\A}{\mathfrak{A}}
\renewcommand{\S}{\mathfrak{S}}

\newcommand{\C}{\mathbb{C}}

\newcommand{\E}{\mathbb{E}}

\newcommand{\mc}{\mathcal}

\newcommand{\bbone}{\mathbbold{1}}
\renewcommand{\l}{\lambda}

\renewcommand{\L}{\mathcal{L}}

\DeclareMathOperator{\Tr}{Tr}

\DeclareMathOperator{\Id}{Id}

\DeclareMathOperator{\Ind}{Ind}
\DeclareMathOperator{\Res}{Res}

\DeclareMathOperator{\sgn}{sgn}

\DeclareMathOperator{\sym}{sym}

\DeclareMathOperator{\Pois}{Pois}

\title{The $k$-cycle shuffling with repeated cards}
\author{Jiahe Shen}

\date{\today}

\begin{document}

\thanks{The author thanks Evita Nestoridi for helpful discussions, and acknowledges the Simons Center for Geometry and Physics for funding the Columbia/Stony Brook Probability Day where these discussions occurred. The author also thanks Ivan Corwin and Roger Van Peski for reading the draft and providing comments. The author acknowledges support from Ivan Corwin's NSF grant DMS-2246576 and Simons Investigator grant 929852.}

\maketitle

\begin{abstract}
We investigate the $k$-cycle shuffle on repeated cards, namely on a deck consisting of $l$ identical copies of each of $m$ card types, with total size $n=ml$. We establish asymptotic results for the total variation mixing of this shuffle, including cutoff and explicit limiting profiles. For fixed $l$, we show that the walk exhibits cutoff at time $\frac{n}{k}\log n$ with window of order $\frac{n}{k}$, and we identify the limiting profile in terms of the total variation distance between Poisson distributions arising from quotient fixed-point statistics. When $l\to\infty$ with sufficiently slow growth, more precisely when $l=o(\log n)$, we prove that the cutoff location shifts to $\frac{n}{k}\left(\log n-\frac 12\log l\right)$, again with window of order $\frac{n}{k}$, and that the limiting profile is asymptotically Gaussian, arising from a Poisson comparison after normal approximation.

The proof is based on an approximation of the shuffling measure by an explicitly tractable auxiliary measure, generalizing the $k=2$ case from Jain and Sawhney \cite{jain2024hitting}. The representation-theoretic framework underlying the analysis of this auxiliary measure follows from the work of Hough \cite{hough2016random} and Nestoridi and Olesker-Taylor \cite{nestoridi2022limit}.
\end{abstract}

\textbf{Keywords: }\keywords{Card shuffling, Random walk, Symmetric group, Representation theory}

\textbf{Mathematics Subject Classification (2020): }\subjclass{60J10 (primary); 60B15, 20C30 (secondary)}

\tableofcontents

\section{Introduction}\label{sec: Intro}

\subsection{Main results}

Questions about how repeated shuffling randomizes a deck of cards naturally lead to the study of random walks on the symmetric group. Roughly speaking, the cutoff phenomenon means that convergence to randomness happens abruptly: the system remains far from equilibrium for a long time, and then becomes well mixed within a much shorter time window. A landmark result of Diaconis--Shahshahani \cite{diaconis1981generating} showed that the random transposition shuffle exhibits cutoff at time $\frac12 n\log n$, thereby introducing representation-theoretic techniques that have since become central in the subject. Much later, Teyssier \cite{teyssier2020limit} determined the corresponding limiting profile, proving that at time $\frac12 n(\log n+c)$ the total variation distance converges to
\[
d_{TV}\!\left(\Pois(1+\exp(-c)),\Pois(1)\right).
\]
For the random \(k\)-cycle shuffle, Berestycki, Schramm, and Zeitouni \cite{MR2884874} established cutoff at time \(\frac{n}{k}\log n\) when \(k\) is fixed. Hough \cite{hough2016random} later extended the cutoff result to the regime \(2\le k\le o(n/\log n)\) by developing refined character estimates for the symmetric group. Building on the same character-estimate framework, Nestoridi and Olesker-Taylor \cite{nestoridi2022limit}, together with Teyssier's approximation method, determined the limiting profile in the same regime: at time \(\frac{n}{k}(\log n+c)\), the total variation distance converges to
\[
d_{TV}\!\left(\Pois(1+\exp(-c)),\Pois(1)\right).
\]
More recently, Olesker-Taylor, Teyssier, and Thévenin \cite{olesker2025sharp} developed sharp character bounds in a substantially more general setting, strong enough to recover cutoff for the full range of \(k\) and to suggest corresponding profile results as well. 

Based on the above results, it is natural to consider the analogous shuffling problem when the deck contains repeated cards, namely when there are $l$ indistinguishable copies of each of $m$ card types. This leads to a Markov chain on a quotient of the symmetric group, a viewpoint emphasized for instance in the book of Diaconis-Fulman \cite[Chapter 4]{diaconis2023mathematics}. The goal of the present paper is to determine the cutoff time and limiting profile for the random $k$-cycle shuffle in this repeated-card setting, in a form that recovers the classical single-deck results when $l=1$. Before stating our main results, we first introduce the basic definitions and notation.
For all $n\ge 1$, let $\S_n$ be the symmetric group over $n$ symbols, $\A_n\subset\S_n$ be the alternating group, and $\A_n^c\subset\S_n$ be the complement of the alternating group. Denote by $U_{\S_n},U_{\A_n},U_{\A_n^c}$ the uniform distribution over $\S_n,\A_n,\A_n^c$, respectively. 

\begin{defi}\label{defi: quotient of space}
Suppose $n=ml$ where $m,l\ge 1$. Then we abbreviate $\sim_l\backslash\S_n$ for the set of left cosets
$$(\underbrace{\S_l\times\cdots\times \S_l}_{m\text{ times}})\backslash\S_n:=\{(\underbrace{\S_m\times\cdots\times \S_m}_{l\text{ times}})\circ\sigma:\sigma\in\S_n\},$$
where 
$$\underbrace{\S_l\times\cdots\times \S_l}_{m\text{ times}}:=\{\sigma\in\S_n:\sigma(i)\equiv i\pmod m\quad\forall1\le i\le n\}$$ 
is a subgroup of $\S_n$. 
Denote by $\sim_l\backslash U_{\S_n}$ the uniform distribution over $\sim_l\backslash\S_n$.
\end{defi}

Now we give the definition of a random $k$-cycle. Our setup involves parity constraints which follow from Nestoridi and Olesker-Taylor \cite{nestoridi2022limit} and Hough \cite{hough2016random}:

\begin{defi}\label{defi: Pnk and its convolution}
Let $P_{n,k}$ be the random permutation in $\S_n$ given by
\begin{equation}\label{eq: measure of k-cycle}
P_{n,k}:=\begin{cases}
(i_1\ldots i_k) & \text{with probability }\frac{k}{n(n-1)\cdots(n-k+1)}\\
\text{otherwise} & \text{with probability zero}\end{cases}.
\end{equation}
Moreover, for any $t\ge 1$, let $P_{n,k}^{*t}$ be the $t$-fold convolution of $P_{n,k}$. If $k$ is even and $t$ is odd, we view $P_{n,k}^{*t}$ as a random variable in $\A_n^c$; otherwise, we view $P_{n,k}^{*t}$ as a random variable in $\A_n$. Given $n=ml$ with $l\ge 2$, the random variable $\sim_l\backslash P_{n,k}^{*t}\in\sim_l\backslash\S_n$ is induced from \Cref{defi: quotient of space} in the obvious way.
\end{defi}

It is worth highlighting that in the above definition, we regard $P_{n,k}^{*t}$ as a random variable in $\A_n$ or its complement, but we regard $\sim_l\backslash P_{n,k}^{*t}\in\sim_l\backslash\S_n$ as a random variable in the quotient space of $\S_n$. 

We begin with the regime in which the multiplicity parameter \(l\) grows with \(n\). In this case, the repeated-card structure already changes the location of cutoff and leads to a different limiting profile from the classical one.

\begin{thm}[\Cref{thm: restate of window growing m} in the text]\label{thm: window growing m}
Let \(n=ml\), such that \(l=\omega(1)\) but \(l=o(\log n)\). Suppose \(2\le k\le n^{c_0+o(1)}\), where \(c_0\in[0,1)\) is an absolute constant, and
$$
t=\frac nk\left(\log n-\frac 12\log l+c\right),
$$
where \(c\in \R\) is an absolute constant. Then, we have
$$
d_{TV}(\sim_l\backslash P_{n,k}^{*t},\sim_l\backslash U_{\S_n})
=
2\Phi(\exp(-c)/2)-1+o(1).
$$
Here, \(\Phi(x):=\frac{1}{\sqrt{2\pi}}\int_{-\infty}^x\exp(-y^2/2)\,dy\) denotes the cumulative distribution function of the normal distribution \(\mathcal{N}(0,1)\).
\end{thm}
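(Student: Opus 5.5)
The plan is to compare $P_{n,k}^{*t}$ with an explicit auxiliary measure, in the spirit of Teyssier and of Nestoridi--Olesker-Taylor, and then to push everything to the quotient $\sim_l\backslash\S_n$. Taking a quotient can only decrease total variation, so any approximation on $\S_n$ (or on $\A_n$, $\A_n^c$) transfers to the quotient.

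\textbf{Step 1: the auxiliary measure.} At time $t=\frac nk(\log n-\frac12\log l+c)$, the expected number of cards never touched by any of the $t$ cycles is
\[
n\bigl(1-\tfrac kn\bigr)^t\approx n e^{-kt/n}=\sqrt{l}\,e^{-c}\to\infty .
\]
Define $Q_t$ as follows: draw a random set $F$ of "untouched" positions, each included independently with probability $e^{-kt/n}$ (or with the exact hitting law), and draw a uniform permutation of the complement of $F$, with its parity corrected so that $Q_t$ lives on $\A_n$ or $\A_n^c$ as prescribed in \Cref{defi: Pnk and its convolution}. I would show that $d_{TV}(P_{n,k}^{*t},Q_t)\to 0$ by generalizing the $k=2$ argument of Jain--Sawhney. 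The ingredients are:
\begin{itemize}
\item Fourier/character expansion of $P_{n,k}^{*t}$, using the character ratio bounds of Hough and Nestoridi--Olesker-Taylor for $k\le n^{c_0+o(1)}$;
\item the observation that conditional on the set of touched cards, the walk restricted to them is essentially mixed, because $t$ exceeds the cutoff time for a deck of size $n-|F|$ by a margin tending to infinity.
\end{itemize}

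\textbf{Step 2: reduction to a one-dimensional statistic.} For $\sigma\in\S_n$, let $N(\sigma)$ be the number of positions $i$ with $\sigma(i)\equiv i\pmod m$. This is the number of quotient fixed points, and it is well defined on cosets.
\begin{itemize}
\item Under $\sim_l\backslash U_{\S_n}$, each position matches with probability $1/m$. The matches are weakly dependent, so $N$ is approximately $\Pois(n/m)=\Pois(l)$.
\item Under $Q_t$, the untouched positions always match. This adds about $\sqrt l\,e^{-c}(1-1/m)$ extra fixed points, so $N$ is approximately $\Pois(l+\sqrt l\,e^{-c})$.
\end{itemize}
I would prove these Poisson approximations by the method of moments or Stein--Chen, uniformly for $l=o(\log n)$. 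Since $l\to\infty$, both laws are approximately normal with variance $l$ and means differing by $e^{-c}\sqrt l$. Their total variation therefore tends to $d_{TV}(\mathcal N(0,1),\mathcal N(e^{-c},1))=2\Phi(e^{-c}/2)-1$. Using $N$ as a test statistic gives the lower bound.

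\textbf{Step 3: the upper bound, which I expect to be the main obstacle.} Here one must show that, after quotienting, $Q_t$ differs from uniform only through $N$. Concretely, conditional on $N=j$, the law of the coset under $Q_t$ should be close in total variation to the uniform measure on cosets with $N=j$, after averaging over $j$.

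I would first try an exchangeability argument. Conditional on $F$, the coset of a $Q_t$-sample is the coset of a uniform permutation of $F^c$. Its law is invariant under conjugation by $\S_l^{\times m}$-compatible relabelings, and it is obtained from the uniform quotient law by forcing the positions in $F$ to match. Comparing the likelihood ratio
\[
\frac{d(\sim_l\backslash Q_t)}{d(\sim_l\backslash U_{\S_n})}(x)
\]
with a function of $N(x)$, I expect to get a ratio of the form $\binom{N(x)}{|F|}$ divided by its mean, up to factors $1+o(1)$.

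The restriction $l=o(\log n)$ should enter exactly here, and in Step 1. It is needed to control:
\begin{itemize}
\item collisions between $F$ and repeated card types;
\item the dependence among matches;
\item parity effects, which disappear in the quotient once $l\ge2$.
\end{itemize}
These error terms should be polynomial in $l$ times $e^{O(l)}/n$, and hence negligible.
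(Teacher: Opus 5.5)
Your overall route is the paper's: an auxiliary measure with a forced fixed set of Poisson size $\gamma_t=\sqrt{l}\,e^{-c}$ and a parity-corrected uniform permutation elsewhere, obtained by generalizing Jain--Sawhney; then pushforward to $\sim_l\backslash\S_n$, reduction to quotient fixed points, and the comparison of $\Pois(l+\sqrt{l}e^{-c})$ with $\Pois(l)$. However, the justification you give for the second ingredient of Step 1 is false, so it cannot serve as a proof step. With $n'=n-|F|$ and $|F|\approx\sqrt{l}e^{-c}$,
\[
t-\frac{n'}{k}\log n'=-\frac{n}{2k}\log l+\frac{cn}{k}+O\Bigl(\frac{|F|\log n}{k}\Bigr),
\]
so $t$ lies $\omega(n/k)$ \emph{before} the cutoff for a deck of size $n'$, not after it. Indeed, a $k$-cycle walk on $n'$ cards at time $t$ would itself still have about $\sqrt{l}e^{-c}$ untouched cards. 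The approximation $P_{n,k}^{*t}\approx Q_t$ is true, but only for Fourier reasons. By Young's rule, the auxiliary measure with $|S|=M$ has coefficient $\binom{M}{r}d_{\lambda^*}/d_\lambda$ at $\lambda$ with $\lambda_1=n-r\ge n-M$, and it vanishes when both $\lambda_1$ and $l(\lambda)$ are below $n-M$. Averaging over $M\sim\Pois(\gamma_t)$ gives $\approx(\gamma_t/n)^r\approx e^{-rkt/n}$, which matches $s_\lambda(k)^t$ to leading order for $r<\frac12\log n$. Partitions outside $\mathcal{L}$ are killed by the Hough and Nestoridi--Olesker-Taylor character bounds. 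The level-$r$ $L^2$ mass is $\approx\gamma_t^{2r}/r!$, so the $L^2$ distance to uniform is of order $e^{le^{-2c}}\to\infty$; smallness comes only from cancellation against $Q_t$. This requires $\sum_{r\ge\frac12\log n}\gamma_t^{2r}/r!=n^{-\omega(1)}$ and $e^{\gamma_t^2}=n^{o(1)}$, which the paper gets from $\gamma_t^2=le^{-2c}=o(\log n)$. That is where $l=o(\log n)$ is genuinely needed.

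Conversely, Step 3 is not the obstacle you expect, and $l=o(\log n)$ plays no role there. Given the forced set $S$, a coset $x$ receives mass $\mathbf{1}[S\subseteq F_{n,l}(x)]\prod_{i=1}^m(l-|S\cap T_{i,l}|)!/(n-|S|)!$. Hence the density of the pushed-forward auxiliary measure with respect to $\sim_l\backslash U_{\S_n}$ is \emph{exactly} a function of $F_{n,l}(x)$, and total variation on the quotient equals total variation between the laws of $F_{n,l}$. Invariance under relabelings preserving the blocks $T_{i,l}$ reduces this further to the laws of $|F_{n,l}|$. The only error is the probability of clustered configurations, which is $n^{-1+o(1)}$ once $l=n^{o(1)}$. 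On non-clustered $x$, for a Poisson-sized forced set, your likelihood ratio is $e^{-\gamma_t}(1+\gamma_t/l)^{N(x)}$ up to a factor $1+n^{-\omega(1)}$. This is precisely the $\Pois(l+\gamma_t)$-to-$\Pois(l)$ ratio, and it is the paper's argument.
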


\Cref{thm: window growing m} shows that once the number of repeated copies tends to infinity, the chain mixes earlier than in the classical \(k\)-cycle shuffle: compared with the usual cutoff time \(\frac{n}{k}\log n\), the cutoff is advanced by \(\frac{n}{2k}\log l\). The window remains of order \(\frac{n}{k}\), but the limiting profile is now Gaussian rather than Poissonian. As far as we know, this is a new phenomenon in the study of random \(k\)-cycle shuffles.

We next turn to the fixed-multiplicity regime. In that case, the cutoff location is the same as in the classical setting, but the limiting profile is shifted by the repeated-card structure.

\begin{thm}[\Cref{thm: restate of window fixed m} in the text]\label{thm: window fixed m}
Let \(n=ml\), such that \(l\ge 2\) is fixed, and \(m=\omega(1)\). Suppose \(2\le k\le o(n/\log n)\), and
$$
t=\frac nk(\log n+c),
$$
where \(c\in \R\) is an absolute constant. Then, we have
$$
d_{TV}(\sim_l\backslash P_{n,k}^{*t},\sim_l\backslash U_{\S_n})
=
d_{TV}(\Pois(l+\exp(-c)),\Pois(l))+o(1).
$$
Here, \(\Pois(\cdot)\) denotes the Poisson distribution.
\end{thm}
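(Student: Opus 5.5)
We adapt the Teyssier / Nestoridi--Olesker-Taylor approximation scheme to the quotient $\sim_l\backslash\S_n$. The scheme has three steps: replace $P_{n,k}^{*t}$ by a tractable auxiliary measure, compute the law of the relevant quotient statistic under that measure, and compare it with the law of the same statistic under $\sim_l\backslash U_{\S_n}$.

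\textbf{Step 1: auxiliary measure.} We introduce an auxiliary measure $Q_t$ on $\S_n$ (respecting the same parity class as $P_{n,k}^{*t}$), generalizing the $k=2$ construction of Jain--Sawhney. Concretely, $Q_t$ is uniform on permutations conditioned on their fixed-point set: we first choose a random set $F$ of "untouched" points, whose size is approximately $\Pois(e^{-c})$ at $t=\frac nk(\log n+c)$, and then take $Q_t$ uniform over permutations fixing $F$ pointwise. Using the character bounds of Hough and Nestoridi--Olesker-Taylor, valid for $k=o(n/\log n)$, we will show
\[
\|P_{n,k}^{*t}-Q_t\|_{TV}=o(1).
\]
Ideally we prove this in $\ell^2$ on the relevant parity coset, by showing that the Fourier coefficients agree up to negligible error on all irreducibles. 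The dominant representations are those whose first row is $n-j$ with $j$ small, and these are exactly the ones captured by fixed points. Since projection to the quotient is a contraction in total variation, it suffices to treat $\sim_l\backslash Q_t$.

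\textbf{Step 2: the statistic under $Q_t$.} For a coset $\sim_l\backslash\sigma$, define the quotient fixed-point count
\[
X(\sigma)=\#\{i:\sigma(i)\equiv i \pmod m\}.
\]
This is well defined on cosets. Under $\sim_l\backslash U_{\S_n}$, the standard Poisson limit for matches of a multiset permutation gives $X\to\Pois(l)$ when $l$ is fixed. Under $\sim_l\backslash Q_t$, conditional on $F$, the statistic splits as $X=|F|+X'$. Here $X'$ is the quotient fixed-point count of a near-uniform permutation of the remaining $n-|F|$ points, so $X'$ is asymptotically $\Pois(l)$ independently of $F$. Hence $X\approx\Pois(l+e^{-c})$.

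\textbf{Step 3: sufficiency of $X$.} We show that, conditionally on $X$ (more precisely, on the data determining $Q_t$), the distribution of $\sim_l\backslash Q_t$ is asymptotically the same as that of $\sim_l\backslash U$ conditioned on $X$. The key observation is that $Q_t$ conditioned on $|F|=j$ is uniform over permutations with a marked fixed set. Pushing forward to cosets, the likelihood ratio with respect to the uniform measure is an explicit function of the quotient fixed-point configuration, and we expect it to equal, up to $1+o(1)$ on typical cosets, a function of $X$ alone. This would give
\[
d_{TV}\bigl(\sim_l\backslash Q_t,\sim_l\backslash U\bigr)=d_{TV}\bigl(\mathcal L_{Q_t}(X),\mathcal L_U(X)\bigr)+o(1)=d_{TV}\bigl(\Pois(l+e^{-c}),\Pois(l)\bigr)+o(1).
\]

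\textbf{Main obstacle.} The hardest part is Step 1 for $k$ as large as $o(n/\log n)$: controlling character ratios uniformly to obtain the $o(1)$ comparison, including the parity bookkeeping. We would reuse the bounds from \cite{hough2016random,nestoridi2022limit} essentially verbatim. A secondary difficulty is the likelihood-ratio computation in Step 3, which requires counting the permutations in a coset that fix a given set of points. We plan to handle this by inclusion--exclusion over matched positions, using that the multiplicity $l$ is fixed.
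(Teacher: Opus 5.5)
Your proposal is correct and follows essentially the same route as the paper. You approximate $P_{n,k}^{*t}$ in $\ell^2$ by the Jain--Sawhney-type measure $\nu_{n,k}^t$ (about $\Pois(e^{-c})$ pinned points, uniform with the correct parity on the rest), push forward to the quotient, and compare quotient fixed-point counts, $\Pois(l+e^{-c})$ versus $\Pois(l)$. The paper's version of your Step 3 is a bit cleaner and needs no inclusion--exclusion: for $l\ge 2$ the parity restriction disappears on the quotient, and the two pushforwards have exactly the same conditional law given the quotient fixed-point \emph{set} $F_{n,l}$. A non-clustering estimate, saying each card type contributes at most one quotient fixed point with high probability, then reduces that set to its cardinality.
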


We emphasize that \Cref{thm: window growing m} and \Cref{thm: window fixed m} concern convergence to the uniform distribution on the quotient space \(\sim_l\backslash \S_n\), rather than to the uniform distribution on \(\A_n\) or on \(\A_n^c\). This is a feature specific to the repeated-card setting. Nevertheless, when \(l=1\), the conclusion of \Cref{thm: window fixed m} parallels the classical \(k\)-cycle result: the cutoff occurs at time \(\frac{n}{k}\log n\) with a window of order \(\frac{n}{k}\), and the limiting profile is exactly
$$
d_{TV}\!\left(\Pois(1+\exp(-c)),\Pois(1)\right).
$$
To the best of our knowledge, \Cref{thm: window growing m} and \Cref{thm: window fixed m} provide the first explicit determination of both the cutoff location and the limiting profile for the random \(k\)-cycle shuffle on repeated cards. In particular, they exhibit a transition from a Poissonian profile to a Gaussian one as the multiplicity parameter \(l\) grows.

We now briefly outline the proof of our main results. A key step is inspired by the work of Jain and Sawhney \cite[Theorem 1.3]{jain2024hitting}, which treats the case $k=2$. More precisely, we construct another probability measure on the shuffling group that follows a completely different sampling procedure from the original random $k$-cycle walk, but is considerably more amenable to analysis. The advantage of this auxiliary measure is that its quotient fixed-point statistics can be described explicitly, which ultimately allows us to extract the cutoff location and limiting profile.

\begin{defi}\label{defi: construction of nu}
Suppose $t\in\Z$. Set $t':=t-\lfloor\frac{n\log n}{k}\rfloor$, and $\gamma_t:=\exp(-kt'/n)$. Let $\nu_{n,k}^t$ denote the measure on $\S_n$ defined by the following sampling process: first, sample $M_t\in\{0,1,\ldots,n\}$ according to the distribution $\mathbf{P}[M_t=x]=\mathbf{P}(\Pois(\gamma_t)=x)/\mathbf{P}(\Pois(\gamma_t)\le n)$, then sample a uniformly random subset $S_t$ of size $M_t$ in $[n]$. Finally,
\begin{enumerate}
\item When $k$ is odd, sample a uniformly random element of $\A_{[n]\backslash S_t}$ and view it as an element of $\A_n$ by fixing all of the elements in $S_t$.
\item When $k$ is even, and $t$ is odd, sample a uniformly random element of $\A_{[n]\backslash S_t}^c$ and view it as an element of $\A_n^c$ by fixing all of the elements in $S_t$.
\item When $k$ is even, and $t$ is even, sample a uniformly random element of $\A_{[n]\backslash S_t}$ and view it as an element of $\A_n$ by fixing all of the elements in $S_t$.
\end{enumerate}
\end{defi}

The next theorem shows that the original shuffling measure can be well approximated by the auxiliary measure $\nu_{n,k}^t$, in the sense that their total variation distance is small in the regimes relevant to our main results.

\begin{thm}[\Cref{thm: restate of pnkt and nu} in the text]\label{thm: pnkt and nu}
For $n=\omega(1)$ and $k\ge 2$, the following holds. Let $t':=t-\frac{n\log n}{k}$. Let $P_{n,k}^{*t}$ be defined as in \Cref{defi: Pnk and its convolution}, and $\nu_{n,k}^t$ be defined as in \Cref{defi: construction of nu}. 
\begin{enumerate}
\item Suppose that $2\le k\le o(n/\log n)$, and $t'=cn/k$, where $c\in\R$ is an absolute constant. Then, we have $$d_{TV}(P_{n,k}^{*t},\nu_{n,k}^t)=o(1).$$ 
\item Suppose that $2\le k\le n^{c_0+o(1)}$, where $c_0\in[0,1)$ is an absolute constant, and $|t'|\le \frac{n(\log\log n-\omega(1))}{2k}$.
Then, we have $$d_{TV}(P_{n,k}^{*t},\nu_{n,k}^t)\le n^{c_0-1+o(1)}.$$ 
\end{enumerate}
\end{thm}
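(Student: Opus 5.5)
Both $P_{n,k}^{*t}$ and $\nu_{n,k}^t$ are conjugation-invariant measures supported on the same coset ($\A_n$ or $\A_n^c$). I will therefore compare them through characters, following Teyssier's approach for random transpositions as adapted by Nestoridi and Olesker-Taylor. For a class function $f$ on $\S_n$ the Fourier coefficients are scalars $\hat f(\lambda)=\sum_\sigma f(\sigma)\chi^\lambda(\sigma)/d_\lambda$. For $P_{n,k}^{*t}$ this gives $\hat P(\lambda)=r_\lambda^t$, where $r_\lambda=\chi^\lambda(k\text{-cycle})/d_\lambda$.

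For $\nu_{n,k}^t$, condition on $M_t=x$. The measure is uniform on $\A_{[n]\setminus S}$ (or its complement), which is $\frac12(U_{\S_{n-x}}\pm \sgn\cdot U_{\S_{n-x}})$ embedded in $\S_n$. Averaging over $S$ and applying the branching rule, its Fourier coefficient at $\lambda$ equals $\binom{n}{x}^{-1}$ times the number of standard skew tableaux of $\lambda/(n-x)$ (for the trivial part), plus the analogous quantity for $\lambda'$ with the appropriate sign. Equivalently, $\hat\nu(\lambda)=\mathbf E\big[d_{\lambda/(n-M_t)}\binom{n}{M_t}/ d_\lambda\cdot\ldots\big]$. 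The point is that for $\lambda$ with first row $\lambda_1=n-j$, $j$ small, this is approximately $\E[\binom{M_t}{j}]/\binom nj\cdot(\text{const})\approx \gamma_t^j/ n^j\cdot(\cdots)$, which matches $r_\lambda^t\approx (1-kj/n)^t\approx n^{-j}\gamma_t^j$. The $\sgn$ part handles $\lambda$ with a long first column in the same way.

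I would then bound $d_{TV}$ via Cauchy--Schwarz restricted to the relevant coset. Since both measures live on $\A_n$ (or $\A_n^c$), $\lambda$ and $\lambda'$ pair up, and it suffices to show
\[
4\,d_{TV}^2\le \sum_{\lambda\ne (n),(1^n)} d_\lambda^2\,\big|r_\lambda^t-\hat\nu(\lambda)\big|^2 .
\]
This is too lossy near $\lambda=(n-j,\mu)$ with small $j$, because $d_\lambda\sim n^j/j!$, and the $\ell^2$ bound fails at the cutoff window. I would therefore follow Teyssier and split the difference as a signed measure. For small $|\mu|=j\le J$, compute $\hat P(\lambda)-\hat\nu(\lambda)$ precisely and show that the main terms agree up to a factor $1+O(j^2k/n+ j^2/n\cdot\ldots)$. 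Then sum $d_\lambda|\chi^\mu|$-weighted errors in $\ell^1$ using the explicit Teyssier formula $\chi^\lambda(\sigma)\approx$ polynomial in fixed points. For large $j$, bound both $\sum d_\lambda^2 r_\lambda^{2t}$ and $\sum d_\lambda^2\hat\nu(\lambda)^2$ separately by $o(1)$ using Hough's character bound $|r_\lambda|\le (\max(\lambda_1/n, \lambda_1'/n)+O(\ldots))^{k(1-\ldots)}$, which is available for $k=o(n/\log n)$.

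The main obstacle is the precise character ratio expansion $r_\lambda=\big(1-\tfrac{j}{n}\big)^{k}\cdot(1+O(\cdot))$ for $\lambda_1=n-j$ with $j$ up to about $\log n$, uniformly in $k$. I would take this from the Hough/Nestoridi--Olesker-Taylor estimates, and compare it with the hypergeometric moment $\E\binom{M_t}{j}/\binom nj=\gamma_t^j n^{-j}(1+O(j^2/n))$. Part (1) follows with $o(1)$ errors. For part (2), with $k\le n^{c_0}$, the relative error per level is $O(j^2k/n)$. Summing over $j$ weighted by $\gamma_t^j/j!$, with $\gamma_t\le(\log n)^{1/2-\omega(1)}$ allowed by the constraint on $|t'|$ (so that $e^{\gamma_t}$ factors stay $n^{o(1)}$), gives the bound $n^{c_0-1+o(1)}$. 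The truncation by $\mathbf P(\Pois(\gamma_t)\le n)$ contributes only super-polynomially small error.
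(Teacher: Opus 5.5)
Your overall architecture is the paper's. You compare the scalar Fourier coefficients of $P_{n,k}^{*t}$ and $\nu_{n,k}^t$, match them on partitions with a long first row or column (your ``main obstacle'' is exactly \Cref{prop: small r}), and dispose of all other $\lambda$ with character bounds. You depart from the paper in claiming that Cauchy--Schwarz plus Plancherel is too lossy for $\lambda=(n-j,\mu)$ with $j$ small, which leads you to a Teyssier-type $\ell^1$ step that you never specify. That claim is false here. The $\ell^2$ method fails at cutoff when one compares with the \emph{uniform} measure. There the coefficient difference is $s_\lambda(k)^t$ itself, and $\sum_{\lambda_1=n-r}d_\lambda^2e^{-2rkt/n}\approx\gamma_t^{2r}/r!$ sums over $r\ge1$ to $e^{\gamma_t^2}-1$, which is of order one. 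The point of the Jain--Sawhney auxiliary measure is that it cancels this main term.

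For $r=n-\lambda_1<\frac12\log n$ the paper gets $|s_\lambda(k)^t-\widehat\nu(\lambda)|\le\epsilon_r e^{-rkt/n}$ with $\epsilon_r=O(\min\{1,r(k+r)\log n/n\})$. The $\log n$ comes from $t\approx n\log n/k$, and it is why part (1) needs $k=o(n/\log n)$; your $O(j^2k/n)$ misses it. The paper then simply bounds
\[
\sum_{r<\frac12\log n}\ \sum_{\lambda_1=n-r}d_\lambda^2\,\epsilon_r^2\,e^{-2rkt/n}\ \le\ \sum_{r\ge 1}\epsilon_r^2\,\frac{\gamma_t^{2r}}{r!}.
\]
This is $o(1)$ in part (1), where $\gamma_t=e^{-c}$ is fixed, and at most $n^{2c_0-2+o(1)}e^{\gamma_t^2}$ in part (2). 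The range $r\in[\frac12\log n,\log n]$ is negligible, since $\gamma_t^{2r}/r!=n^{-\omega(1)}$ there. So the factor that matters is $e^{\gamma_t^2}$, not $e^{\gamma_t}$. The hypothesis $|t'|\le n(\log\log n-\omega(1))/(2k)$ says exactly $\gamma_t^2=o(\log n)$, i.e.\ $e^{\gamma_t^2}=n^{o(1)}$. It gives $\gamma_t=o(\sqrt{\log n})$, not $\gamma_t\le(\log n)^{1/2-\omega(1)}$. Even a crude $\ell^1$ bound $\sum_\lambda d_\lambda|s_\lambda(k)^t-\widehat\nu(\lambda)|$ would close, so your route is not wrong, just an unnecessary detour.

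On the remaining pieces, first note that the paper truncates $M_t$ at $\log n$, at cost $n^{-\omega(1)}$ in total variation. By the Kostka-number formula the auxiliary coefficients then vanish for $\lambda\notin\mathcal L$ and never need to be bounded there. Also, the trivial-part coefficient of the $M_t=x$ component is $f^{\lambda/(n-x)}/d_\lambda$, where $f^{\lambda/(n-x)}$ counts standard skew tableaux. This equals $\binom{x}{r}d_{\lambda^*}/d_\lambda$ when $\lambda_1\ge n-x$, with no factor $\binom{n}{x}^{-1}$.

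For $\lambda\notin\mathcal L$ you need $\sum d_\lambda^2|s_\lambda(k)|^{2t}=n^{-\omega(1)}$ uniformly over the allowed $t'$; an $o(1)$ bound does not give part (2). This is where the technical work lies (\Cref{thm: removal}, split by $r$ and by $k\le 6\log n$ versus $k>6\log n$). A generic ``Hough bound'' does not suffice. Take $k>6\log n$ and $\lambda$ with short first row and column. Hough's published bound $|s_\lambda(k)|\le e^{-k/2}$ (his Theorem 5(b)), combined with $\sum_\lambda d_\lambda^2\le n!$, gives only $e^{-n-kt'+O(\log n)}$. This does not vanish once $kt'\le -n$, which happens in part (1) with $c\le -1$ and on much of the range in part (2). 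The paper needs the strict improvement $|s_\lambda(k)|\le e^{-0.51k}$ of \Cref{prop: short first row and large k}.
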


With \Cref{thm: pnkt and nu} in hand, it remains to compare the auxiliary measure $\nu_{n,k}^t$ with the uniform probability measure on the quotient space. We first reduce this comparison to the law of the set of quotient fixed points. By symmetry, conditional on the number of quotient fixed points, both measures are uniform over all configurations with that cardinality, so the problem further reduces to comparing the distribution of the number of quotient fixed points. Since it is well known that, under the uniform measure, the number of quotient fixed points converges to a Poisson distribution, our task is thereby reduced to comparing the total variation distance between two Poisson laws with different parameters, which leads to the desired limiting profile. At the same time, following \Cref{thm: pnkt and nu}, one also obtains an alternative proof of the limiting profile theorem of Nestoridi and Olesker-Taylor \cite[Theorem B]{nestoridi2022limit}.

\subsection{What's next?}

Our results for the repeated-card setting suggest several directions for further investigation. In the present paper, we only treat the regime $l=o(\log n)$, that is, when the multiplicity parameter tends to infinity very slowly. The main reason is technical: the Jain--Sawhney approach underlying \Cref{thm: pnkt and nu} only gives effective control in a relatively narrow time interval around $\frac{n}{k}\log n$. Nevertheless, the conclusion of \Cref{thm: window growing m} may well remain valid for substantially larger values of $l$.

A basic example is the case $k=2$, $m=2$, and $l=n/2$. In this regime, the model is exactly the simple exclusion process on the complete graph with $n$ vertices and $n/2$ particles. For this chain, Lacoin-Leblond \cite[Theorem 1.1]{MR2869447} showed that the mixing time is of order
\[
\frac{n}{2}\log \min\!\left(\frac{n}{2},\sqrt n\right)=\frac{n}{4}\log n,
\]
and that the cutoff window has order $n$. This is fully consistent with the prediction of \Cref{thm: window growing m}, which in this case gives the cutoff location
\[
\frac{n}{2}\left(\log n-\frac12\log \frac n2\right)
=
\frac{n}{4}\log n + O(n),
\]
together with a window of order $n$. The above observation motivates us to formulate the following conjecture.

\begin{conj}
Let $n=ml=\omega(1)$, where we require $m,l\ge 2$. Then the random transposition shuffle on repeated cards exhibits cutoff at time
$$\frac{n}{2}(\log n-\frac 12\log l)$$
with window of order $n$.
\end{conj}

It should be reasonable to replace random transpositions (resp. $2$-cycle) in the above conjecture by random $k$-cycles, where $k$ is not too large. In this case, the cutoff time would be replaced by $\frac{n}{k}(\log n-\frac 12\log l)$ and the time window would be replaced by $\frac nk$.

Another natural direction is to replace the random $k$-cycle shuffle by more general conjugacy-invariant shuffles on the symmetric group. Instead of taking the driving measure to be supported on a single conjugacy class of $k$-cycles, one may consider random walks generated by an arbitrary conjugacy class, or more generally by a class function on $\S_n$. From the representation-theoretic point of view, this would require extending the character estimates used in the present paper beyond the $k$-cycle setting. In this direction, one may hope to draw on the recent sharp character bounds developed by Olesker-Taylor, Teyssier, and Th\'evenin \cite{olesker2025sharp}. It would be very interesting to understand whether the repeated-card phenomena exhibited in the present paper--notably, the shift in the cutoff location and the transition from a Poisson profile to a Gaussian profile persist in such a general setting.

\subsection{Notation}

Throughout the paper, $d_{TV}(\cdot,\cdot)$ refers to the total variation distance, $\Pois(\cdot)$ refers to the Poisson distribution, and $\Phi(\cdot)$ refers to the cumulative distribution function of the normal distribution $\mathcal{N}(0,1)$. We will write $[n]$ for the set $\{1,2,\ldots,n\}$, and $2^{[n]}$ to be the set consisting of all subsets of $[n]$. When $n=ml$, we write $T_{i,l}:=\{i,i+m,\ldots,i+(l-1)m\}$ for all $1\le i\le m$ (i.e., all repeated cards of a single type), so that $[n]=\bigsqcup_{i=1}^{m}T_{i,l}$. We use the symbol $|\cdot|$ for the order of a finite set or the absolute value of a complex number.

We write \(f=O(g)\) to mean that \(|f| \leq C|g|\) for some absolute constant \(C>0\), and we write \(f=o(g)\) to mean that \(f/g \to 0\). Similarly, we write \(f=\omega(g)\) to mean that \(f/g \to +\infty\). Here all asymptotic notation is understood in absolute value. Moreover, all such asymptotic statements are taken along the given sequence of positive integer inputs. In particular, they are not meant uniformly over all such inputs, but only with respect to the particular sequence under consideration.

\subsection{Outline of the paper}

In \Cref{sec: Approximating the shuffling with an explicitly tractable measure}, we prove \Cref{thm: pnkt and nu}. Based on the approximation provided by \Cref{thm: pnkt and nu}, we prove \Cref{thm: window growing m} and \Cref{thm: window fixed m}  in \Cref{sec: From tractable measure to total variation distance}.

\section{Approximating the shuffling with an explicitly tractable measure}\label{sec: Approximating the shuffling with an explicitly tractable measure}

The purpose of this section is to prove the following theorem, which gives a detailed formulation of \Cref{thm: pnkt and nu}.

\begin{thm}\label{thm: restate of pnkt and nu}
Let $(n_N)_{N\ge 1},(k_N)_{N\ge 1}$, and $(t_N)_{N\ge 1}$ be sequences of positive integers, such that $\lim_{N\rightarrow\infty}n_N=\infty$. For all $N\ge 1$, let $t'_N:=t_N-\frac{n_N\log n_N}{k_N}$. Also, let $P_{n_N,k_N}^{*t_N}$ be defined as in \Cref{defi: Pnk and its convolution}, and $\nu_{n_N,k_N}^{t_N}$ be defined as in \Cref{defi: construction of nu}. 
\begin{enumerate}
\item Suppose that 
\begin{enumerate}
\item $k_N\ge 2$ for all $N\ge 1$, and $\lim_{N\rightarrow\infty}\frac{k_N\log n_N}{n_N}=0$.
\item $t_N'=cn_N/k_N$, where $c\in\R$ is an absolute constant.
\end{enumerate}
Then, we have
$$\lim_{N\rightarrow\infty}d_{TV}(P_{n_N,k_N}^{*t_N},\nu_{n_N,k_N}^{t_N})=0.$$ 
\label{item: dTV for large k}
\item Suppose that 
\begin{enumerate}
\item $k_N\ge 2$ for all $N\ge 1$, and $\limsup_{N\rightarrow\infty}\log_{n_N}k_N\le c_0$, where $c_0\in[0,1)$ is an absolute constant.
\item $\lim_{N\rightarrow\infty}\log\log n_N-|2k_Nt_N'/n_N|=+\infty$.
\end{enumerate}
Then, we have
$$\limsup_{N\rightarrow\infty}\log_{n_N}d_{TV}(P_{n_N,k_N}^{*t_N},\nu_{n_N,k_N}^{t_N})\le c_0-1.$$
\label{item: dTV for wide t}
\end{enumerate}
\end{thm}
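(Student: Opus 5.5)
Both $P_{n,k}^{*t}$ and $\nu_{n,k}^t$ are class functions on $\S_n$: the first as a convolution of a conjugation-invariant measure, the second because the recipe of \Cref{defi: construction of nu} is invariant under relabelling $[n]$. So I will compare their Fourier transforms, $\widehat{\mu}(\lambda)=\sum_\sigma \mu(\sigma)\chi^\lambda(\sigma)/d_\lambda$. For the walk this is $r_\lambda^t$, where $r_\lambda:=\chi^\lambda(k\text{-cycle})/d_\lambda$. For $\nu^t_{n,k}$ the transform is explicit by Frobenius reciprocity. The uniform measure on permutations fixing a given $j$-set $S$ and uniform on $\S_{[n]\setminus S}$ (or on $\A$, $\A^c$) has normalized transform $\langle \chi^\lambda|_{\S_{n-j}}, \bbone\rangle / d_\lambda$, up to the parity correction for $\A$ versus $\A^c$. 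This equals $f^{\lambda/(n-j)}/d_\lambda$, where $f^{\lambda/(n-j)}$ is the number of standard skew tableaux of shape $\lambda/(n-j)$, which is nonzero only if $\lambda_1\ge n-j$. Averaging over $M_t\sim\Pois(\gamma_t)$ truncated at $n$ gives
\[
\widehat{\nu}(\lambda)=\mathbf{E}\bigl[f^{\lambda/(n-M_t)}\bigr]/d_\lambda .
\]
For $\lambda=(n-j,\mu)$ with $|\mu|=j$ small, this is roughly $\mathbf E\binom{M_t}{j}\, f^\mu/d_\lambda\approx \gamma_t^j f^\mu/(j!\,d_\lambda)$. This matches the leading term of $r_\lambda^t$, since $r_\lambda\approx 1-kj/n$ for such $\lambda$ and so $r_\lambda^t\approx n^{-j}\gamma_t^{j}$, while $d_\lambda\approx n^j f^\mu/j!$. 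The parity twist is handled by pairing $\lambda$ with $\lambda'$: the $\A$ or $\A^c$ restriction contributes $\pm\widehat{\cdot}(\lambda')$ with the same sign convention in both measures. This is why the parity is built into both definitions.

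The plan is to bound $d_{TV}$ by the Plancherel/Cauchy–Schwarz (upper bound lemma) inequality,
\[
4\,d_{TV}(P^{*t},\nu^t)^2\le \sum_{\lambda\neq(n),(1^n)} d_\lambda^2\,\bigl|r_\lambda^t-\widehat{\nu}(\lambda)\bigr|^2 .
\]
Because $\nu$ already captures the near-trivial part, one can hope for an $o(1)$ bound here even at $t'=cn/k$, where the bound against uniform fails. I would split the partitions into two ranges.

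\emph{Small range:} $n-\lambda_1=j\le J$, or the conjugate analogue, with $J$ slowly growing in case (1) and of order $\log n$ in case (2). Here I use Hough's and Nestoridi–Olesker-Taylor's expansion of $r_\lambda$ for $\lambda=(n-j,\mu)$:
\[
r_\lambda=1-\frac{kj}{n}+O\!\Bigl(\frac{kj^2+k^2j}{n^2}\Bigr).
\]
Raising to the power $t$ gives
\[
r_\lambda^t=\Bigl(\frac{\gamma_t}{n}\Bigr)^{j}\Bigl(1+O\Bigl(\frac{j^2\log n}{n}+\frac{kj\log n}{n}\Bigr)\Bigr).
\]
I then compare this with the exact $f^{\lambda/(n-M)}$ expansion of $\widehat\nu(\lambda)$, including the terms where $M_t>j$ that come from removing cells of the first row. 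Summing $d_\lambda^2\,|\cdot|^2$ against $d_\lambda^2\le \binom{n}{j}^2 (f^\mu)^2$ gives a sum over $j$ controlled by $\sum_j \gamma_t^{2j}/j!\cdot(\text{error})^2$. This is $o(1)$ under $k=o(n/\log n)$ with $\gamma_t=e^{-c}$. In case (2) we have $\gamma_t^2\le e^{\log\log n-\omega(1)}=o(\log n)$, so the Poisson tail is still summable, and the error factor is $n^{c_0-1+o(1)}$.

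\emph{Large range:} here I bound the two terms separately. The character bounds of Hough and Nestoridi–Olesker-Taylor give $|r_\lambda|\le (\max(\lambda_1,\lambda_1')/n)^{k(1-o(1))}$ roughly. With this, $\sum d_\lambda^2 r_\lambda^{2t}$ over $j>J$ decays like $\sum_{j>J}(\gamma_t^2)^j/j!$, which is small. For $\nu$, $\widehat\nu(\lambda)$ requires $M_t\ge j$, so $\sum d_\lambda^2\widehat\nu(\lambda)^2\le \sum_{j>J}\mathbf E[\cdots]^2$. Since $\sum_\mu (f^\mu)^2=j!$, this is bounded by a Poisson tail $\mathbf{P}(\Pois(\gamma_t)\ge J)$-type quantity, again small. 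The global bound for $j\ge n/2$-type shapes uses the same estimates as in Hough's cutoff proof.

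The main obstacle is the small-range matching. Its error term must be sharp enough to give $o(1)$ in case (1) across the whole regime $k=o(n/\log n)$, and a polynomial rate $n^{c_0-1}$ in case (2) while $\gamma_t$ may be as large as $\sqrt{\log n}$. For this I would use the precise Frobenius/Hough formula for $r_{(n-j,\mu)}$ as a product over contents, rather than crude bounds, and check that its $t$-th power matches $\mathbf E\binom{M_t}{j}$ term by term.
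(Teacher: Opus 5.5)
Your architecture is the paper's. You use the Plancherel/Cauchy--Schwarz bound and compute the Fourier transform of $\nu_{n,k}^t$ by Frobenius reciprocity; your $f^{\lambda/(n-M)}$ is the Kostka number $K_{\lambda,(n-M,1^M)}=\binom{M}{j}f^{\lambda^*}$ of \Cref{thm :zetaM_fourier_An}. You pair $\lambda$ with $\lambda'$ to absorb the parity, match leading terms for small $j=n-\lambda_1$ via Hough's product formula, and bound the rest separately. One minor difference: the paper first truncates $M_t$ at $\log n$, at total variation cost $n^{-\omega(1)}$, so that $\widehat{\nu}$ vanishes off $\mathcal L$. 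Without this, your bound on the large-range Fourier mass of $\nu$ needs extra care, because $\lambda/(n-M)$ is a disjoint union of a row and $\lambda^*$ only when $M\le n/2$.

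The genuine gap is the large range. You treat it as routine, but it is where the paper's real work lies (\Cref{thm: removal}).

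\begin{itemize}
\item A bound $|r_\lambda|\le(\max(\lambda_1,\lambda_1')/n)^{k(1-o(1))}$ with an unquantified $o(1)$ only gives $|r_\lambda|^{2t}\le n^{-2j(1-o(1))}$. The lost factor $n^{o(j)}$ can swamp the $1/j!$ you need against $\sum_{\lambda_1=n-j}d_\lambda^2\le n^{2j}/j!$.
\item For $j\in[\log n,n^{5/6}]$ one needs $|r_\lambda|^{2t}\le\exp(-2j\log n+j\log j-\omega(j))$. The paper gets this from Hough's main-term bound $MT\le e^{-rk/n}$ plus explicit error-term control, using $j\ge\log n$ to absorb $|2kt'/n|\le\log\log n-\omega(1)$.
\item For $j\in[n^{5/6},0.499n]$ and $k\le 6\log n$, Hough's Lemmas 14--15 give only $|r_\lambda|^{2t}\le\exp(-2j\log n+(2/3+o(1))j\log j)$. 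So the decay there is not of the form $\gamma_t^{2j}/j!$, although it still suffices, leaving $\exp(-(1/3-o(1))j\log j)$.
\end{itemize}

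Most seriously, ``the same estimates as in Hough's cutoff proof'' do not suffice for $k>6\log n$ and shapes with $\max(\lambda_1,\lambda_1')<e^{-0.68}n$. Hough's Theorem 5(b) gives $|r_\lambda|\le e^{-k/2}$, hence only
\[
\sum_\lambda d_\lambda^2|r_\lambda|^{2t}\le n!\,e^{-kt}=\exp(-n-kt'+O(\log n)).
\]
This does not tend to zero when $c\le -1$ in case (1). It also fails in case (2), where $kt'$ may be as negative as $-\frac n2(\log\log n-\omega(1))$. Hough's upper bound never meets this because it only needs $c$ large and positive. You need a strictly better constant $e^{-(1/2+\delta)k}$. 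The paper extracts $e^{-0.51k}$ by re-optimizing Hough's proof (\Cref{prop: short first row and large k} and the remark after it).

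By contrast, the small-range matching you single out as the main obstacle is a direct expansion, carried out in \Cref{prop: small r}.
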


For the rest of the section, for notational ease, we drop the subscripts and simply write $n,k$ for $n_N,k_N$. Now, let us recall necessary background from representation theory.

\subsection{Nonabelian Fourier transform and related notations}

Given a finite group $G$, we let $\widehat G$ denote the set of irreducible representations. We define convolution $f_1,f_2:G\rightarrow\C$ to be
$$(f_1*f_2)(z):=\sum_{x\in G}f_1(x)f_2(x^{-1}z).$$
For $\rho\in\widehat G$, let $d_\rho$ denote the dimension of $\rho$. The nonabelian Fourier transform for a function $f:G\rightarrow\C$ is the map $\widehat f$ given by
$$\widehat f(\rho):=\sum_{g\in G}f(g)\rho(g)\in\Mat_{d_\rho}(\C),\quad\forall \rho\in\widehat G.$$
Then, for all $f_1,f_2:G\rightarrow\C$ and $\rho\in\widehat G$, we have
\begin{equation}\label{eq: convolution to multiplication}
\widehat{f_1*f_2}(\rho)=\widehat f_1(\rho)\cdot\widehat f_2(\rho).
\end{equation}
In this paper, we always take $G=\mathfrak{S}_n$. Following the standard construction of Specht modules, we can identify elements in $\widehat{\mathfrak{S}}_n$ with partitions of $n$. From now on, we use $\l\vdash n$ to denote that $\l$ is a partition of $n$. Given a partition $\l=(\l_1,\ldots,\l_j)$ with $\l_1\ge\cdots\ge\l_j$, we let $\l'$ denote the conjugate partition and $\l^*=(\l_2,\ldots,\l_j)$ be the partition of $n-\l_1$ obtained by truncating $\l$. Denote by
$l(\lambda)$ the number of parts of $\lambda$, and
\[
\lambda^\bullet:=\bigl((\lambda')^*\bigr)'\,,
\]
which is obtained from $\lambda$ by deleting its first column. We will also apply the notation introduced in the following definition, which follows from \cite{nestoridi2022limit} and \cite{hough2016random}.

\begin{defi}
Suppose $\l\vdash n$ has diagonal length $m$ (see the top of \cite[page 454]{hough2016random}). The \emph{Frobenius notation} is the expression
$$\l=(a_1,\ldots,a_m|b_1,\ldots,b_m),$$
where 
$$a_i:=\lambda_i-i+\frac{1}{2},b_i:=\lambda_i'-i+\frac{1}{2},\quad\forall 1\le i\le m.$$
\end{defi}

Let
\begin{equation}\label{eq: long first row and column partitions}
\mathcal L \ :=\ \Bigl\{\lambda\vdash n:\ \lambda_1\ge n-\log n\ \ \text{or}\ \ l(\l)\ge n-\log n\Bigr\}
\end{equation}
be the set of partitions with long first row or long first column. It is clear that $\mathcal L=\mathcal L_1\bigsqcup \mathcal L_2$, where
$$\mathcal L_1:=\ \Bigl\{\lambda\vdash n:\ \lambda_1\ge n-\log n\Bigr\},\quad\mathcal L_2:=\ \Bigl\{\lambda\vdash n:\ l(\l)\ge n-\log n\Bigr\}.$$
Moreover, under the conjugation map, the partitions in $\mathcal{L}_1$ have one-to-one correspondence with the partitions in $\mathcal{L}_2$. For the rest of this section, we  usually use the letter $r$ for $n-\l_1$.

Probability measures on $\mathfrak{S}_n$ will be viewed as real-valued functions. In our applications, we will furthermore restrict attention to functions $f$ which are class functions, i.e., functions that are constants on conjugacy classes. 
\begin{lemma}[Schur's lemma]
Suppose $f:\mathfrak{S}_n\rightarrow \C$ is a class function. Then, we have
$$\widehat f(\lambda)=\frac{\sum_{g\in G}f(g)\chi_\lambda(g)}{d_\lambda}\Id_{d_\lambda},$$
where $\chi_\lambda$ is the character corresponding to $\lambda\vdash n$.
\end{lemma}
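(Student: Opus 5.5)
The statement is the standard Schur's lemma for class functions on $\S_n$, and I will prove it with two facts: $\widehat f(\lambda)$ is an intertwiner of the irreducible representation $\rho_\lambda$, and the trace identity pins down the scalar.

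\emph{Step 1: commutation.} Fix $\lambda\vdash n$ and write $\rho=\rho_\lambda$. For any $h\in\S_n$,
$$\rho(h)\widehat f(\lambda)\rho(h)^{-1}=\sum_{g}f(g)\rho(hgh^{-1})=\sum_{g'}f(h^{-1}g'h)\rho(g')=\sum_{g'}f(g')\rho(g')=\widehat f(\lambda).$$
The second equality is the reindexing $g'=hgh^{-1}$, which is a bijection of $\S_n$. The third uses that $f$ is a class function. Hence $\widehat f(\lambda)$ commutes with every $\rho(h)$.

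\emph{Step 2: scalar.} Since $\rho$ is irreducible over $\C$, Schur's lemma in its abstract form gives $\widehat f(\lambda)=c\,\Id_{d_\lambda}$ for some $c\in\C$. To see this directly, pick an eigenvalue $c$ of $\widehat f(\lambda)$, which exists because $\C$ is algebraically closed. The kernel of $\widehat f(\lambda)-c\Id$ is a nonzero $\rho$-invariant subspace, since this operator commutes with $\rho$. By irreducibility the kernel is the whole space.

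\emph{Step 3: identify the scalar.} Take traces on both sides:
$$c\,d_\lambda=\Tr\widehat f(\lambda)=\sum_g f(g)\Tr\rho(g)=\sum_g f(g)\chi_\lambda(g).$$
Dividing by $d_\lambda$ gives the formula in the statement.

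No step is a real obstacle. The only point needing care is Step 2: irreducibility must hold over $\C$, which it does because the Specht modules are absolutely irreducible.
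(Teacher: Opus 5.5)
Your proof is correct. Showing that $\widehat f(\lambda)$ commutes with every $\rho_\lambda(h)$, applying Schur's lemma over $\C$ via an eigenvalue, and then taking traces to identify the scalar is the standard argument. The paper gives no proof and simply cites this as the classical Schur's lemma, so your write-up supplies exactly the reasoning it takes for granted.
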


In particular, let $P_{n,k}$ be the probability measure over $\mathfrak{S}_n$ as in \eqref{eq: measure of k-cycle}. Then, its Fourier transform is given by
\begin{equation}\label{eq: fourier transform of Pn}
\widehat P_{n,k}(\lambda)=\frac{\chi_\lambda(\tau_k)}{d_\lambda}\Id_{d_\lambda},\quad \lambda\vdash n.
\end{equation}
where $\tau_k$ is an arbitrary $k$-cycle. Following \cite{nestoridi2022limit}, we will also use the abbreviation $s_\lambda(k)$ for $\frac{\chi_\lambda(\tau_k)}{d_\lambda}$. In particular, by the hook length formula, we have $d_\l=d_{\l'}$. Furthermore, denote by $S^{\lambda'}$ and $S^\lambda$ the Specht module of types $\lambda'$ and $\lambda$, and let $\sgn_n$ be the sign representation. Then, the natural isomorphism $S^{\lambda'}\cong S^\lambda\otimes\sgn_n$ leads to the fact that $\chi_{\l'}(k)=\chi_\l(k)\cdot(-1)^{k-1}$, and therefore
$$s_{\l'}(k)=s_\l(k)\cdot(-1)^{k-1}.$$


\subsection{Character estimates for $P_{n,k}^{*t}$}

Following \eqref{eq: convolution to multiplication} and \eqref{eq: fourier transform of Pn}, it is clear that
$$\widehat {P_{n,k}^{*t}}(\l)=s_\l(k)^t\Id_{d_\l},\quad\l\vdash n.$$
In the following, we provide quantitative bounds that will be useful later. 

\begin{lemma}[{\cite[Theorem 5(a)]{hough2016random}}]\label{lem: MT and ET}
Let $0<\epsilon<\frac{1}{2}$, let $r=n-\l_1$ and suppose that $r+k+1<(\frac12-\epsilon)n$. Then
\begin{multline}\label{eq: MT and ET}
s_\l(k)=\frac{(n-r-1)^{\underline{k}}}{n^{\underline{k}}}\prod_{i=2}^m\left(1-\frac{k}{n-(1+r+\lambda_i-i)}\right)\prod_{i=1}^m\left(1-\frac{k}{n-(r-\lambda_i'+i)}\right)^{-1}\\
+O_\epsilon\left(\exp\left(k\left[\log\frac{(1+\epsilon)(k+1+r)}{n-k}+O_\epsilon\left(r^{-\frac{1}{2}}\right)\right]\right)\right).
\end{multline}
Here, the superscript $\underline{k}$ is the falling factorial. Moreover, when $r<k$, the error term on the second line is actually zero.
\end{lemma}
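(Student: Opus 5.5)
The plan is to go through the Frobenius contour-integral formula for the character ratio on a $k$-cycle, which is the route taken in \cite{hough2016random}.

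\textbf{Step 1: the contour-integral representation.} For $\lambda=(a_1,\ldots,a_m\mid b_1,\ldots,b_m)$ in Frobenius notation, set
\[
F(z):=\prod_{i=1}^m\frac{z-a_i}{z+b_i}.
\]
Frobenius' formula (equivalently, iterated Murnaghan--Nakayama removal of a single $k$-border strip) gives
\[
n^{\underline{k}}\,s_\lambda(k)=-\frac{1}{k}\cdot\frac{1}{2\pi i}\oint \frac{F(z-\tfrac k2)}{F(z+\tfrac k2)}\,\prod_{j=0}^{k-1}\Bigl(z+\tfrac{k}{2}-\tfrac12-j\Bigr)\,dz,
\]
where the contour encloses all poles. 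Since the shifted product is a polynomial, the poles come only from the zeros of $F(z+\tfrac k2)$ and the poles of $F(z-\tfrac k2)$. These are located near $a_i-\tfrac k2$ and $-b_i+\tfrac k2$.

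\textbf{Step 2: isolating the main term.} Because $r=n-\lambda_1$ is small compared with $n$, the part $a_1=\lambda_1-\tfrac12$ is huge and all other $a_i,b_i$ are at most $r+\tfrac12$. I would deform the contour into two pieces:
\begin{itemize}
\item a small circle around the isolated pole at $z=a_1-\tfrac k2$;
\item a circle of radius roughly $(\tfrac12+\epsilon)(r+k+1)$ about the origin, containing all the remaining poles.
\end{itemize}
The residue at $a_1-\tfrac k2$ should be evaluated exactly. The factor $\prod_j(a_1-j)/n^{\underline k}$ gives $(n-r-1)^{\underline k}/n^{\underline k}$. The ratios of the other Frobenius factors, after substituting $a_i=\lambda_i-i+\tfrac12$ and $b_i=\lambda_i'-i+\tfrac12$, rearrange into the two products $\prod_{i\ge2}\bigl(1-\tfrac{k}{n-(1+r+\lambda_i-i)}\bigr)$ and $\prod_{i\ge1}\bigl(1-\tfrac{k}{n-(r-\lambda_i'+i)}\bigr)^{-1}$. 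This is a bookkeeping computation. The $-1/k$ prefactor cancels against the residue of $1/(z+\tfrac k2-a_1)$ up to the sign, which I would check.

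\textbf{Step 3: the error term.} The remaining integral over the inner circle of radius $R\approx(1+\epsilon)(k+1+r)/2$ must be bounded by
\[
n^{\underline k}\exp\Bigl(k\log\tfrac{(1+\epsilon)(k+1+r)}{n-k}+O(k r^{-1/2})\Bigr).
\]
On this circle:
\begin{itemize}
\item The polynomial factor $\prod_j|z+\tfrac k2-\tfrac12-j|$ is at most about $(R+\tfrac k2)^k$. Dividing by $n^{\underline k}\approx(n-k)^k$ produces the main exponential.
\item The first-row factor $\frac{z-\frac k2-a_1}{z+\frac k2-a_1}$ is $1+O(k/n)$ in modulus.
\item The remaining ratio $\prod_{i\ge2}\frac{z-\frac k2-a_i}{z+\frac k2-a_i}\prod_i\frac{z+\frac k2+b_i}{z-\frac k2+b_i}$ involves the parts of $\lambda^*$, of total size $r$.
\end{itemize}
I would bound the log of this ratio by a sum over the parts. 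The constraint $\sum(a_i+b_i)\le r+O(m)$ and $m\le\sqrt r$ should make its contribution $O(k r^{-1/2})$ once the contour is kept at distance $\gtrsim\epsilon R$ from every pole. The hypothesis $r+k+1<(\tfrac12-\epsilon)n$ is exactly what separates the inner circle from the pole at $a_1-\tfrac k2$.

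\textbf{Step 4: the case $r<k$.} Here every pole other than $a_1-\tfrac k2$ has matching zeros. Equivalently, by Murnaghan--Nakayama, no $k$-border strip can be removed except from the first row, since $\lambda^*$ has fewer than $k$ cells. So the inner integral is exactly zero and the formula has no error term.

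\textbf{Main obstacle.} I expect Step 3 to be the hard part: choosing the radius so that it stays uniformly far from the poles $a_i-\tfrac k2$ and $-b_i+\tfrac k2$ when $\lambda^*$ is arbitrary. I would first try to pick the radius from an interval of length $\asymp\epsilon R$ by pigeonhole, so that it avoids all $2m$ poles by $\gtrsim R/m$, and then check that this loss is absorbed in the $O(r^{-1/2})$ term in the exponent.
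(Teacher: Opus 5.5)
Your route (Frobenius' contour formula, the main term as the residue at $a_1-\tfrac k2$, the error as an integral over a circle enclosing the remaining poles) is the one in Hough's paper. That is all the present paper relies on, since it quotes the lemma without proof. Your integral formula is correct: it checks on $\lambda=(n)$ and $\lambda=(1^n)$, and the residue bookkeeping in Step 2 gives exactly the stated main term. Step 3, however, has two genuine gaps.

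\textbf{The circle is too small.} The remaining poles are not confined to radius $\approx\tfrac12(r+k)$. They sit at $a_i-\tfrac k2$ ($i\ge 2$) and $\tfrac k2-b_i$, with $a_i\le r-\tfrac32$ and $b_i\le r+\tfrac12$, so their modulus can reach about $r+\tfrac12-\tfrac k2$. For $\lambda=(n-r,r)$ and $k=2$, the pole at $a_2-1=r-\tfrac52$ is not cancelled. It carries the entire error, of size $\asymp r^2/n^2$, yet it lies outside your circle once $r$ is large. Your pigeonhole fallback has the same defect: a radius threaded between poles leaves some residues outside, and then ``main residue plus inner integral'' no longer equals the total.

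The fix is to take $R=(1+\epsilon)(r+1)$ when $r\ge k$. Then:
\begin{itemize}
\item every other pole lies inside, at distance $\gg\epsilon(r+k)$ from the circle;
\item the hypothesis $r+k+1<(\tfrac12-\epsilon)n$ keeps $a_1-\tfrac k2$ at distance $\gg\epsilon n$;
\item $R+\tfrac k2\le(1+\epsilon)(r+k+1)$, so your polynomial bound survives.
\end{itemize}
Let $h$ be the product of the non-first-row factors. The termwise estimate then gives $|h|\le e^{O_\epsilon(kr^{-1/2})}$, using only $m\le\sqrt r$.

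\textbf{The contour length is not accounted for.} Even on the correct circle, you never account for the factor $R/k$ coming from $\tfrac1k$ times the length of the contour. For $k\gg_\epsilon\log r$ this is absorbed by running the argument with $\epsilon/2$. For bounded $k$ it is not. For $k=2$ the length-times-maximum bound gives $O(r^3/n^2)$ instead of the stated $O(r^2/n^2)$, and the latter is exactly what the proof of \Cref{prop: small r} uses. Bounding $|h-1|\ll km/R$ termwise still loses a factor $m$, which can be as large as $\sqrt r$.

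The missing idea is cancellation. Write $p$ for the polynomial factor and $g$ for the first-row factor.
\begin{itemize}
\item $pg$ is holomorphic on the closed disc, so $\oint pg\,dz=0$, and you may integrate $pg(h-1)$ instead.
\item Expanding $\log h$ to first order, the $m-1$ terms $-k/z$ from the $a_i$ ($i\ge2$) and the $m$ terms $+k/z$ from the $b_i$ collapse to a single $k/z$.
\item The remainder is controlled by $\sum_{i\ge2}a_i+\sum_i b_i=r+\tfrac12$ and $m\le\sqrt r$.
\end{itemize}
This gives $|h-1|\ll_\epsilon (k/R)\,e^{O(kr^{-1/2})}$ when $k\le c\epsilon r$, which exactly cancels the length factor. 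When $k\gg\epsilon r$ we have $R/k=O_\epsilon(1)$ anyway.

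\textbf{Step 4.} The Murnaghan--Nakayama remark is not a proof. What you need for $r<k$ is two facts about each remaining pole:
\begin{itemize}
\item it lies in $[-\tfrac{k-1}2,\tfrac{k-1}2]$ with the parity of the zeros of $p$, which holds;
\item it is simple. A collision $a_i-\tfrac k2=\tfrac k2-b_j$ would mean the hook length $a_i+b_j$ of a cell in row $i\ge2$ equals $k$. That is impossible, since such hooks have at most $r<k$ cells.
\end{itemize}
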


Following \cite{hough2016random}, we will abbreviate $MT$ (resp. $ET$) for the main (resp. error) term in \eqref{eq: MT and ET}. As we shall see, when we use the above bound, the main term is usually close to $1$, and the error term is usually close to zero.

\begin{lemma}[{\cite[Lemma 13]{hough2016random}}]\label{lem: lower bound for MT}
Suppose that $k+r+1<n/2$, where $r:=n-\l_1$. Then, we have $MT\le \exp(-rk/n)$.
\end{lemma}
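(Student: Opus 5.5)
The plan is to bound the three pieces of $MT$ in \eqref{eq: MT and ET} separately, writing each as a product of factors close to $1$. The factors of the second product and the factors of the third product with $i\ge 2$ will be paired so that each pair is at most $1$. The falling-factorial ratio will then be used to absorb the one unpaired factor, namely the $i=1$ term of the third product. Throughout, the hypothesis $k+r+1<n/2$ guarantees that every denominator below is positive and at least $n/2$, so all factors are well defined and positive.

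\textbf{Step 1 (pairing $i\ge 2$).} For $2\le i\le m$ we have $\lambda_i\ge i$ and $\lambda_i'\ge i$, because $i$ lies inside the diagonal. Hence
\[
n-(1+r+\lambda_i-i)\le n-r-1 \qquad\text{and}\qquad n-(r-\lambda_i'+i)\ge n-r .
\]
Therefore the $i$-th factor of the second product is at most $1-\frac{k}{n-r-1}$, and the $i$-th factor of the third product is at most $\bigl(1-\frac{k}{n-r}\bigr)^{-1}$. Their product is at most
\[
\frac{1-k/(n-r-1)}{1-k/(n-r)}\le 1 .
\]
Consequently $MT$ is bounded by the falling-factorial ratio times the single factor $(1-k/D)^{-1}$, where $D:=n-r+\lambda_1'-1$. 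Since $1\le\lambda_1'\le r+1$, this $D$ satisfies $n-r\le D\le n$.

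\textbf{Step 2 (the two remaining factors).} I will write
\[
\frac{(n-r-1)^{\underline k}}{n^{\underline k}}=\prod_{j=0}^{k-1}\Bigl(1-\frac{r+1}{n-j}\Bigr)\le \exp\Bigl(-(r+1)\sum_{j=0}^{k-1}\frac{1}{n-j}\Bigr).
\]
I will also telescope
\[
\Bigl(1-\frac kD\Bigr)^{-1}=\prod_{j=0}^{k-1}\frac{D-j}{D-j-1}\le\exp\Bigl(\sum_{j=0}^{k-1}\frac{1}{D-1-j}\Bigr).
\]
Given these two bounds, it suffices to show
\[
r\sum_{j=0}^{k-1}\Bigl(\frac1{n-j}-\frac1n\Bigr)\;\ge\;\sum_{j=0}^{k-1}\Bigl(\frac1{D-1-j}-\frac1{n-j}\Bigr).
\]

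\textbf{Step 3 (main obstacle).} The inequality in Step 2 is where the real work lies, since the crude bound $D\le n$ loses exactly one factor of size $e^{k/n}$ too many. The left side equals $r\sum_j \frac{j}{n(n-j)}$, which is of order $rk^2/n^2$. The right side equals $\sum_j\frac{n-D+1}{(D-1-j)(n-j)}$, and since $n-D+1=r+2-\lambda_1'\le r+1$ it is of order $(r+1)k/n^2$, with constants controlled because all denominators exceed $n/2$.

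For $k$ larger than a small absolute constant the inequality then follows by comparing these sums term by term. The delicate cases are small $k$ (notably $k=2$) and small $r$; these I would check directly, using the exact expressions. If the direct comparison is too lossy there, the fallback is to avoid discarding the slack in Step 1: when $m\ge 2$ the paired ratio is strictly below $1$ by roughly $e^{-k/n^2}$. The alternative is to reorganize $MT$ as in Hough's derivation, as a ratio of products over contents, and pair the factor $(1-k/D)^{-1}$ with the $j=k-1$ term of the falling factorial rather than bounding the two separately.
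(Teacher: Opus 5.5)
Your Step 1 is correct. One small inaccuracy: the second-product denominators $n-(1+r+\lambda_i-i)$ are only bounded below by $n-2r+1>2k+3$, not by $n/2$. That still makes every factor positive, which is all the pairing needs.

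The gap is in Steps 2--3. Replacing $1-\frac{r+1}{n-j}$ by $e^{-(r+1)/(n-j)}$ and $1+\frac{1}{D-1-j}$ by $e^{1/(D-1-j)}$ throws away the second-order slack. The resulting sufficient inequality is false for small $k$, not merely delicate. Take $k=2$ and $\lambda=(n-r,r)$ with $r\ge 1$. Then $\lambda_1'=2$, $D=n-r+1$ and $n-D+1=r$. Your left side is $\frac{r}{n(n-1)}$, while your right side is $r\bigl(\frac{1}{n(n-r)}+\frac{1}{(n-1)(n-r-1)}\bigr)$, which is strictly larger. The same failure occurs for $k=3$ with every two-row shape, for $k=4$ with $r$ near $n/2$ (e.g. $n=1000$, $r=490$), and for $r=0$ with every $k$. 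With your bounds $n-D+1\le r$ and $D-1-j>n/2$, the term-by-term comparison closes exactly when $\sum_{j<k}\frac{j-2}{n-j}>0$, i.e. for $k\ge 5$. Since $k=2$ is the central case, ``check directly'' cannot be completed along this route; you would have to abandon Step 2 altogether.

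The missing observation is that no second-order analysis is needed. Your remark that the crude bound loses a factor $e^{k/n}$ is mistaken. The relevant crude bound is the lower bound $D\ge n-r$, which gives $(1-k/D)^{-1}\le\frac{n-r}{n-r-k}$, and it is lossless because of an exact telescoping:
\[
MT\le\frac{(n-r-1)^{\underline k}}{n^{\underline k}}\cdot\frac{n-r}{n-r-k}=\frac{(n-r)^{\underline k}}{n^{\underline k}}=\prod_{j=0}^{k-1}\Bigl(1-\frac{r}{n-j}\Bigr)\le\Bigl(1-\frac{r}{n}\Bigr)^k\le e^{-rk/n}.
\]
This is precisely the ``pair $(1-k/D)^{-1}$ with the $j=k-1$ term'' alternative in your last sentence. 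Made the main argument, it proves the lemma for all $k\ge 2$ and $r\ge 0$ at once, and Steps 2--3 become unnecessary. The paper itself gives no proof here (it quotes Hough's Lemma 13), so there is no in-paper argument to compare against.
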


\begin{prop}\label{prop: small r}
Suppose $r:=n-\l_1\in[1, \log n]$, and $2\le k\le o(n/\log n)$. Moreover, suppose that $t':=t-\frac{n\log n}{k}$ satisfies $|t'|\le \frac{n(\log\log n-\omega(1))}{2k}$. Then, we have
$$s_\l(k)^t=\exp\left(-\frac{rkt}{n}-\frac{r(k+r)\log n}{2n}(1+o(1)+O(1/k))\right).$$
\end{prop}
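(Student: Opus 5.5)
We would apply \Cref{lem: MT and ET} directly. Since $r\le\log n$ and $k=o(n/\log n)$, the hypothesis $r+k+1<(\frac12-\epsilon)n$ holds for large $n$. The plan is to show that the error term is negligible relative to the main term even after raising to the power $t$, and then to expand $\log MT$ to second order.

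\textbf{Step 1 (main term).} Write $\log MT$ as the sum of three pieces. The first is
\[
\log\frac{(n-r-1)^{\underline k}}{n^{\underline k}}=\sum_{j=0}^{k-1}\log\Bigl(1-\frac{r+1}{n-j}\Bigr).
\]
Expanding with $\log(1-x)=-x-x^2/2+O(x^3)$ gives
\[
-\frac{(r+1)k}{n}-\frac{(r+1)k(k-1)}{2n^2}-\frac{(r+1)^2k}{2n^2}+O\Bigl(\frac{rk^3+r^3k}{n^3}\Bigr).
\]
The two products over the rows $\lambda_i$ ($i\ge2$) and columns $\lambda_i'$ each contribute terms of size $k/n$ times the number of factors. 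Since $\lambda^*\vdash r$, the diagonal length satisfies $m\le\sqrt r+1$, and the denominators $n-(1+r+\lambda_i-i)$ and $n-(r-\lambda_i'+i)$ equal $n(1+O(r/n))$. The $i=1$ factor of the second product, with $\lambda_1'$, combines with the $(r+1)$ above to cancel the ``$+1$'' and produce $-rk/n$ at first order. Indeed, by the standard identity behind $s_\lambda(2)$, the first-order contributions of all factors sum to $-\frac{k}{n}\bigl(r+O(\text{content sum}/n)\bigr)$, and the content sum of $\lambda^*$ is $O(r^2)$.

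Collecting terms, we expect
\[
\log MT=-\frac{rk}{n}-\frac{r(k+r)k}{2n^2}\bigl(1+O(1/k)+o(1)\bigr).
\]
The $O(1/k)$ absorbs shifts such as $k(k-1)$ versus $k^2$, and the $o(1)$ absorbs the cubic terms, which are smaller by a factor $(k+r)/n=o(1)$.

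\textbf{Step 2 (multiply by $t$).} Multiply by $t=\frac{n\log n}{k}+t'$. The first-order term gives exactly $-rkt/n$. For the second-order term,
\[
t\cdot\frac{r(k+r)k}{2n^2}=\frac{r(k+r)\log n}{2n}\Bigl(1+\frac{kt'}{n\log n}\Bigr),
\]
and $kt'/(n\log n)=O(\log\log n/\log n)=o(1)$. This yields the claimed formula.

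\textbf{Step 3 (error term).} This is the step we expect to be the main obstacle. If $r<k$, the error term vanishes and there is nothing to do. Otherwise $k\le r\le\log n$, and
\[
ET=\exp\Bigl(k\bigl[\log\tfrac{(1+\epsilon)(k+1+r)}{n-k}+O(r^{-1/2})\bigr]\Bigr)\le(C\log n/n)^{k}.
\]
Meanwhile $MT\ge\exp(-2rk/n)$, by the Step 1 expansion or by a crude lower bound on each factor, so $ET/MT=O\bigl((C\log n/n)^k\bigr)$. We then need $t\cdot ET/MT=o\bigl(r(k+r)\log n/n\bigr)$, so that it fits into the $o(1)$ in the exponent. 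Using $t\le 2n\log n/k$ and $k\ge2$, we have $t\,(C\log n/n)^k\le (C'\log n)^{3}/n$. This is $o\bigl(r(k+r)\log n/n\bigr)$ only up to polylogarithmic factors, so the bound must be tightened: for $k\ge3$ the extra power of $n^{-1}$ suffices. For $k=2$ we would instead note that $\chi_\lambda(\tau_2)/d_\lambda$ is given exactly by the content formula, so no error term arises. Hence
\[
\log s_\lambda(k)=\log MT+\log\bigl(1+ET/MT\bigr),
\]
and the second term is negligible after multiplication by $t$, completing the proof.
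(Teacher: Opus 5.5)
Your route is the paper's: Hough's formula, a second-order expansion of $\log MT$, and $t\cdot ET$ negligible for $k\ge 3$. Steps 2 and 3 are fine for $k\ge 3$.

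\emph{The gap.} Step 1 does not justify the claimed precision, because of how you control the $1/n^2$ corrections coming from the denominators $n-(1+r+\lambda_i-i)$ and $n-(r-\lambda_i'+i)$. You bound them by $\frac kn\,O(\text{content sum}/n)$ with a content sum of size $O(r^2)$. That leaves an unknown error of order $kr^2/n^2$ in $\log MT$, which is the same order as the term $\frac{kr^2}{2n^2}$ you are trying to pin down whenever $r\gtrsim k$. When $k\to\infty$ (say $k=\sqrt{\log n}$, $r=\log n$), this error is not absorbed by $1+o(1)+O(1/k)$, since then $O(1/k)=o(1)$.

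The content-sum heuristic is the wrong guide here. The true $s_\lambda(2)$ does depend on the content sum of $\lambda^*$ at order $r^2/n^2$, but that dependence lives in $ET$, not in $MT$.

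\emph{The fix.} What you need is a cancellation between the row and column products. At this order the row product contributes $-\frac{k}{n^2}\sum_{i=2}^m(1+r+\lambda_i-i)$ and the column product contributes $+\frac{k}{n^2}\sum_{i=1}^m(r-\lambda_i'+i)$. Each sum is of size about $mr$. However, since $\sum_{i\ge2}\lambda_i\le r$, $\sum_{i\le m}\lambda_i'\le r+m$ and $m(m-1)\le r$, they equal $(m-1)r+O(r)$ and $mr+O(r)$ respectively. So the net is $O(kr/n^2)$, which is $O(1/(k+r))$ relative to $\frac{kr(k+r)}{2n^2}$; this is exactly what the paper's estimates for $P_1$ and $P_2$ encode.

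The net is in fact exactly $0$. Write $1+r+\lambda_i-i=r+\tfrac12+a_i$ and $r-\lambda_i'+i=r+\tfrac12-b_i$. The identity $\sum_{i=1}^m(a_i+b_i)=n$ together with $a_1=n-r-\tfrac12$ gives $\sum_{i=1}^m b_i+\sum_{i=2}^m a_i=r+\tfrac12$. Hence $\log MT=-\frac{rk}{n}-\frac{rk(k+r+1)}{2n^2}$ up to third-order terms.

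\emph{A smaller point about $k=2$.} The content formula is exact, but it is not equal to $MT$. For $\lambda=(n-2,2)$ it gives $\frac{n-4}{n}$, whereas $MT=\frac{(n-5)(n-2)}{n(n-3)}$. So Step 1 cannot be reused there. Expanding the content formula directly gives only $\log s_\lambda(2)=-\frac{2r}{n}+O\bigl(\frac{r(r+1)}{n^2}\bigr)$, which suffices only because $O(1/k)=O(1)$ when $k=2$.

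That same slack makes the detour unnecessary. When $ET\neq 0$ we have $k\le r$, hence $k+1+r\le 3r$ and $ET=O((Cr/n)^k)$; your bound $(C\log n/n)^k$ throws this away. For $k=2$ one then gets $t\cdot ET=O(r^2\log n/n)$, which the $O(1/k)$ term absorbs; this is how the paper proceeds. Your requirement $t\cdot ET/MT=o\bigl(r(k+r)\log n/n\bigr)$ is stronger than the statement needs when $k$ is bounded.
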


\begin{proof}
We first estimate the main term in \eqref{eq: MT and ET}. 
Write
$$P_0:=\frac{(n-r-1)^{\underline{k}}}{n^{\underline{k}}},P_1:=\prod_{i=2}^m\left(1-\frac{k}{n-(1+r+\lambda_i-i)}\right),P_2:=\prod_{i=1}^m\left(1-\frac{k}{n-(r-\lambda_i'+i)}\right)^{-1}.$$
We have
\begin{align}
\label{eq: estimate of P0_small r}
\begin{split}
P_0&=\frac{n-r-k}{n-r}\cdot\frac{(n-r)\cdots(n-r-k+1)}{n(n-1)\cdots(n-k+1)}\\
&=(1-k/n)\left(1+\frac{O(kr)}{n^2}\right)\prod_{i=0}^{k-1}\left((1-r/n)\left(1-\frac{(1+o(1))i}{2n^2}\right)\right)\\
&=(1-r/n)^k(1-k/n)\left(1-\frac{(1+o(1))k^2r+O(kr)}{2n^2}\right).
\end{split}
\end{align}
Following the definition of the Frobenius notation, it is clear that $m(m-1)\le r$ and $\sum_{i=2}^m \l_i\le r$. Therefore, we have
\begin{align}\label{eq: estimate of P1_small r}
\begin{split}
P_1&=\prod_{i=2}^m\left((1-k/n)\cdot\left(1-\frac{k(1+r+\lambda_i-i)}{n^2}-\frac{k}{n^{3-o(1)}}\right)\right)\\
&=(1-k/n)^{m-1}\cdot\left(1-\frac{k\sum_{i=2}^m(1+r+\lambda_i-i)}{n^2}-\frac{k}{n^{3-o(1)}}\right)\\
&=(1-k/n)^{m-1}\cdot\left(1-\frac{k(m+O(1))r}{n^2}\right).
\end{split}
\end{align}
Notice that $\sum_{i=1}^m\lambda_i'\le r+m$, we also have
\begin{align}\label{eq: estimate of P2_small r}
\begin{split}
P_2&=\prod_{i=1}^m\left((1-k/n)^{-1}\cdot\left(1+\frac{k(r-\lambda_i'+i)}{n^2}+\frac{k}{n^{3-o(1)}}\right)\right)\\
&=(1-k/n)^{-m}\cdot\left(1+\frac{k\sum_{i=1}^m(r-\lambda_i'+i)}{n^2}+\frac{k}{n^{3-o(1)}}\right)\\
&=(1-k/n)^{-m}\cdot\left(1+\frac{k(m+O(1))r}{n^2}\right).
\end{split}
\end{align}
Combining the estimates in \eqref{eq: estimate of P0_small r}, \eqref{eq: estimate of P1_small r} and \eqref{eq: estimate of P2_small r}, we have
\begin{align}\label{eq: estimate of MT_small r}
\begin{split}
MT&=P_0P_1P_2\\
&=(1-r/n)^k\left(1-\frac{(1+o(1))k^2r+O(kr)}{2n^2}\right)\\
&=\exp\left(-\frac{rk}{n}-\frac{(1+o(1))(k^2r+kr^2)+O(kr)}{2n^2}\right).
\end{split}
\end{align}
We now deal with the error term in \eqref{eq: MT and ET}. When $k>r$, the error term is zero. If $k\le r$, then by taking $\epsilon=0.1$ we have 
\begin{equation}\label{eq: estimate of ET_small r}
ET=O(\exp(k[(-1+o(1))\log n]))=O\left(n^{(-1+o(1))k}\right)=O\left(\frac{1}{n^{3-o(1)}}\right)
\end{equation}
when $k\ge 3$, and
\begin{equation}\label{eq: estimate of ET_small r_k=2}
ET=O(\exp(2(\log r-\log n+O(1)))=O\left(\frac{r^2}{n^2}\right)
\end{equation}
when $k=2$. Therefore, by \Cref{lem: MT and ET} and the estimates in \eqref{eq: estimate of MT_small r}, \eqref{eq: estimate of ET_small r} and \eqref{eq: estimate of ET_small r_k=2}, we have
$$s_\lambda(k)=MT+ET=\exp\left(-\frac{rk}{n}-\frac{(1+o(1))(k^2r+kr^2)+O(kr+r^2)}{2n^2}\right),$$
and therefore
\begin{align}
\begin{split}
s_\l(k)^{t}&=\exp\left(-\frac{rkt}{n}-t\cdot\frac{(1+o(1))(k^2r+kr^2)+O(kr+r^2)}{2n^2}\right)\\
&=\exp\left(-\frac{rkt}{n}-\frac{r(k+r)\log n}{2n}(1+o(1)+O(1/k))\right)\\
\end{split}
\end{align}
since $t=(1+o(1))n\log n/k$.
\end{proof}

The following result is a technical adjustment of \Cref{prop: small r}. However, in this case, only an upper bound is needed.

\begin{prop}\label{prop: middle r and ml r and large k}
Let $r:=n-\l_1$. Suppose that one of the following holds:
\begin{enumerate}
\item $r\in[\log n,n^{5/6}]$, and $2\le k\le o(n/\log n)$. \label{item: middle r}
\item $r\in[n^{5/6},0.499 n]$, and $6\log n\le k\le o(n/\log n)$. \label{item: ml r and large k}
\end{enumerate}
Moreover, suppose that $t':=t-\frac{n\log n}{k}$ satisfies $|t'|\le \frac{n(\log\log n-\omega(1))}{2k}$. Then, we have 
$$|s_\l(k)|^{2t}\le \exp\left(-2r\log n+r\log r-\omega(r)\right).$$
\end{prop}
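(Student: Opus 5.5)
We will bound $|s_\l(k)|\le MT+|ET|$ via \Cref{lem: MT and ET}, with the main term controlled by \Cref{lem: lower bound for MT}, and then raise to the power $2t$. The time hypothesis gives $2t = \frac{2n\log n}{k}+\frac{2nt'}{k}$ with $|2kt'/n|\le \log\log n-\omega(1)$. Hence
\[
\frac{2tk}{n}\ \ge\ 2\log n-\log\log n+\omega(1).
\]
In both regimes the hypothesis $r+k+1<(\tfrac12-\epsilon)n$ holds for a fixed small $\epsilon$ (say $\epsilon=10^{-4}$), since $r\le 0.499n$ and $k=o(n)$, so \Cref{lem: MT and ET} and \Cref{lem: lower bound for MT} both apply. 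The target inequality reads $2t\log|s_\l(k)|\le -2r\log n+r\log r-\omega(r)$.

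Suppose first that $ET$ is negligible relative to $MT$, in the sense that $|s_\l(k)|\le \exp(-rk/n)(1+\delta)$ with $2t\delta=o(r)$. Then
\[
|s_\l(k)|^{2t}\le \exp\left(-\frac{2rkt}{n}+o(r)\right)\le \exp\left(-2r\log n+r\log\log n-\omega(r)\right).
\]
Since $r\ge \log n$ forces $r\log\log n\le r\log r$, this gives the claim. So everything reduces to showing
\[
t\,|ET|\,e^{rk/n}=o(r)
\]
or, when that fails, bounding $ET$ directly against the target.

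For \emph{case \eqref{item: middle r}}, where $\log n\le r\le n^{5/6}$, we have
\[
ET=O\left(\exp\left(k\left[\log\tfrac{(1+\epsilon)(k+1+r)}{n-k}+O(r^{-1/2})\right]\right)\right).
\]
If $k\le r$, then $(k+1+r)/n\le 3n^{-1/6}$, so $ET\le n^{-k/6+o(k)}$, while $e^{rk/n}\le e^{o(r)}$ is harmless.

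The difficulty here is that for $k=2$ the error term is $\approx r^2/n^2$, comparable to the decay $rk/n$ only through $t$. Here I would compute $t\cdot ET\approx \frac{n\log n}{2}\cdot\frac{r^2}{n^2}=\frac{r^2\log n}{2n}$, which is $o(r)$ exactly when $r\log n=o(n)$. This holds since $r\le n^{5/6}$.

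For general small $k$ one gets $t\cdot ET\lesssim \frac{n\log n}{k}\bigl(\tfrac{Cr}{n}\bigr)^k$, which is again $o(r)$. The subcase $k>r$ has $ET=0$, and it is the easy one.

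For \emph{case \eqref{item: ml r and large k}}, where $n^{5/6}\le r\le 0.499n$, the main term alone suffices, because $MT\le e^{-rk/n}$. The error term is at most
\[
\exp\bigl(k[\log(0.5(1+\epsilon)+o(1))+O(n^{-5/12})]\bigr)\le e^{-0.69k}\le n^{-4}
\]
using $k\ge 6\log n$. One must still compare $n^{-4}$ with $e^{-rk/n}$, which can be as small as $e^{-0.499k}$. So instead of a multiplicative comparison I would bound
\[
|s_\l(k)|\le e^{-rk/n}+e^{-0.69k}\le 2e^{-rk/n},
\]
using $r/n\le 0.499<0.69$. This gives $|s_\l(k)|^{2t}\le 2^{2t}e^{-2rkt/n}$.

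The loss $2t\log 2=O(n\log n/k)=O(n)$ must then be $\omega(r)$-absorbed. Since $r\ge n^{5/6}$, this requires more care. I would use the sharper bound
\[
e^{-0.69k}/e^{-rk/n}\le e^{-0.19k}\le n^{-1.1},
\]
so that $2t\log(1+n^{-1.1})=o(1)=o(r)$. This resolves the issue.

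The \textbf{main obstacle} is the $k=2$ (small $k$) regime in case \eqref{item: middle r} near $r\sim n^{5/6}$, where the error term competes with the main term. It also requires checking that the $O(r^{-1/2})$ correction in the exponent of $ET$ does not spoil the bound; this holds since $kO(r^{-1/2})$ is tiny relative to $k\log(n/r)$.
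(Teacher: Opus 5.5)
Your proposal is correct and follows essentially the same route as the paper. Both bound $|s_\l(k)|\le e^{-rk/n}+|ET|$ using \Cref{lem: lower bound for MT}, show that $2t\,|ET|\,e^{rk/n}=o(r)$ (via $|ET|\lesssim (Cr/n)^k$ in case (1), and $e^{-0.19k}\le n^{-1.1}$ in case (2), matching the paper's $e^{-(\log 2-0.5)k}\le n^{-1.15}$), and conclude from $2kt/n\ge 2\log n-\log\log n+\omega(1)$ and $\log r\ge \log\log n$. One small imprecision: $e^{rk/n}$ is ``harmless'' not because $e^{rk/n}\le e^{o(r)}$, but because $e^{rk/n}(Cr/n)^k=(Ce^{r/n}r/n)^k$, which your refined $(Cr/n)^k$ estimate already absorbs.
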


\begin{proof}
Following \Cref{lem: lower bound for MT}, we have
\begin{equation}\label{eq: estimate of MT_middle r}
MT\le \exp(-rk/n).
\end{equation}

Now, let us bound the error term by taking $\epsilon=10^{-10}$. For case \eqref{item: middle r}, when $k>r$, the error term is zero. If $k\le r$, we have that
\begin{equation}
ET=O(\exp(k[(-1+o(1))\log\frac nr]))=O\left(\left(\frac rn\right)^{(1-o(1))k}\right),
\end{equation}
and therefore
$$|s_\l(k)|=MT+ET\le\exp(-rk/n)+O\left(\left(\frac rn\right)^{(1-o(1))k}\right)=\exp\left(-\frac{rk}{n}+O\left(\left(\frac rn\right)^{(1-o(1))k}\right)\right).$$
Notice that 
$$2kt/n\ge2\log n-\log\log n+\omega(1),\quad\log r\ge \log \log n,$$ 
we have $-2rkt/n\le -2r\log n+\frac23 r\log r-\omega(r)$. Moreover, we have $t\cdot\left(\frac rn\right)^{(1-o(1))k}\le n\log n\cdot\left(\frac rn\right)^{2-o(1)}=o(r)$, and thus
$$|s_\l(k)|^{2t}\le\exp\left(-\frac{2rkt}{n}+O\left(t\cdot\left(\frac rn\right)^{(1-o(1))k}\right)\right)\le\exp\left(-2r\log n+r\log r-\omega(r)\right).$$

For case \eqref{item: ml r and large k}, when $k>r$, the error term is zero. If $k\le r$, we have 
\begin{equation}
ET=O\left(\exp\left(k\cdot\log\frac 12\right)\right)=O(\exp(-k\log 2)),
\end{equation}
and therefore 
\begin{align}
\begin{split}
|s_\l(k)|&= MT+ET\\
&\le\exp(-rk/n)+O(\exp(-k\log 2))\\
&=\exp\left(-\frac{rk}{n}\right)(1+O(\exp(-k(\log 2-0.5)))\\
&=\exp\left(-\frac{rk}{n}+O(n^{-1.15})\right).
\end{split}
\end{align}
Here, in the last line, we are using the facts that $k\ge 6\log n$ and $6(\log 2-0.5)\ge 1.15$. For the same reason as in case \eqref{item: middle r}, we have $-2rkt/n\le -2r\log n+r\log r-\omega(r)$, and therefore
$$|s_\l(k)|^{2t}\le\exp\left(-\frac{2rkt}{n}+O\left(tn^{-1.15}\right)\right)\le\exp\left(-2r\log n+r\log r-\omega(r)\right).$$
Our bounds for case \eqref{item: middle r} and \eqref{item: ml r and large k} together give the proof.
\end{proof}

\begin{prop}\label{prop: ml r small k}
Suppose that $r:=n-\l_1\in[n^{5/6},0.499n]$. Furthermore, suppose $2\le k\le 6\log n$. Moreover, suppose that $t':=t-\frac{n\log n}{k}$ satisfies $|t'|\le \frac{n(\log\log n-\omega(1))}{2k}$. Then, for sufficiently large $n$, we have
$$|s_\l(k)|^{2t}\le\exp(-2r\log n+(2/3+o(1))r\log r).$$
\end{prop}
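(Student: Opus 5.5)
The plan is to combine the main-term bound of \Cref{lem: lower bound for MT} with a careful treatment of the error term of \Cref{lem: MT and ET}, and to spend the slack $(2/3+o(1))r\log r$ deliberately. Write $x:=r/n\in[n^{-1/6},0.499]$. Two inputs will be used throughout.

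\emph{First input (time).} Since $|t'|\le \frac{n(\log\log n-\omega(1))}{2k}$, we have $2kt/n\ge 2\log n-\log\log n+\omega(1)$.

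\emph{Second input (slack).} Since $r\ge n^{5/6}$, we have $\log r\ge (5/6)\log n$. So the allowed slack $(2/3)r\log r$ is at least $(5/9)r\log n$, a fixed positive fraction of the main exponent $2r\log n$. This is what makes the bounded-$k$ range tractable: we only need
$$-2t\log|s_\l(k)|\ \ge\ 2r\log n-(2/3+o(1))r\log r.$$
The main term alone gives this comfortably, because $\exp(-rk/n)^{2t}\le \exp(-2r\log n+r\log\log n)$ and $r\log\log n=o(r\log r)$.

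\emph{Step 1: the error term.} Apply \Cref{lem: MT and ET} with a tiny $\epsilon$. The hypothesis $r+k+1<(\frac12-\epsilon)n$ holds because $r\le 0.499n$ and $k\le 6\log n$. Since $r\ge n^{5/6}$, the correction $O_\epsilon(r^{-1/2})$ in the exponent is $O(n^{-5/12})$, and $k\cdot O(n^{-5/12})=o(1)$. This gives
$$|ET|\le (1+o(1))\bigl((1+\epsilon)(x+O(\log n/n))\bigr)^k .$$
Combining with $MT\le e^{-kx}$ from \Cref{lem: lower bound for MT}, we get
$$\log|s_\l(k)|\le -kx+\log\bigl(1+C q^k\bigr),\qquad q:=(1+2\epsilon)\,x\,e^{x}.$$

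\emph{Step 2: small $x$.} Fix a small constant $\delta>0$ and assume $x\le\delta$. Then $q\le 2x$, and the loss is
$$2t\log(1+Cq^k)\le 2tC(2x)^k\lesssim \frac{n\log n}{k}\,x^2=\frac{r\,x\log n}{k}\le \delta\, r\log n .$$
Choosing $\delta<1/3$, say, keeps this below $(5/9-\eta)r\log n$ for some fixed $\eta>0$. That gives the claim in this range. For $k\ge 3$ the loss is even smaller, by an extra factor $x^{k-2}$.

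\emph{Step 3: $x\in[\delta,0.499]$ (main obstacle).} Here $q$ is a constant below $1$; at $x=0.499$, $q\approx 0.82$. For $k=2$, $\log(1+Cq^k)$ is then comparable to $kx$, so Steps 1--2 lose roughly half of $2r\log n$. That exceeds the slack. So for bounded $k$ I would \emph{not} use Hough's error term in this range. Instead I would bound $|s_\l(k)|$ directly by a first-row/first-column estimate of the form
$$|s_\l(k)|\le\Bigl(\frac{\max(\l_1,\l'_1)}{n}+O\Bigl(\frac kn\Bigr)\Bigr)^{k}\le (1-x+O(k/n))^k .$$
For $k=2$ this follows from the explicit formula $s_\l(2)=\frac{1}{n(n-1)}\sum_i\bigl(\l_i^2-(2i-1)\l_i\bigr)$, together with $\sum_i\l_i^2\le \l_1 n$ and the conjugation symmetry $s_{\l'}(k)=(-1)^{k-1}s_\l(k)$. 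For general bounded $k$, I would try to derive it from the Murnaghan--Nakayama rule, by removing a $k$-rim hook and inducting on the first row (this is a Fomin--Lulov type estimate). Granting this bound,
$$|s_\l(k)|^{2t}\le \exp\bigl(2tk\log(1-x+O(k/n))\bigr)\le \exp\Bigl(-\frac{2tkx}{1}\cdot\frac{-\log(1-x)}{x}+o(r)\Bigr)\le \exp\bigl(-2r\log n+r\log\log n+o(r)\bigr),$$
using $-\log(1-x)\ge x$ and the first input. This lies within the required bound.

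I expect proving this direct character estimate uniformly for $2\le k\le 6\log n$ to be the hardest part. If it resists, the fallback is to use it only for $k\le K_0$, with $K_0$ a large constant. For $k>K_0$, the quantity $q^k$ is already small enough that the Step 1 bound loses only about $q^{K_0}\,r\log n$, which fits inside the $(5/9)r\log n$ slack.
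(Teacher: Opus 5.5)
Your Steps 1--2 are sound for $r\le\delta n$, provided $\delta$ is chosen small relative to the implied constant $C_\epsilon$ in Hough's error term (not merely $\delta<1/3$). Your large-$k$ fallback also handles $r\in[\delta n,0.499n]$ when $k>K_0$. The gap is the remaining case, $r\in[\delta n,0.499n]$ with $2\le k\le K_0$. That case rests entirely on the estimate $|s_\lambda(k)|\le(\max(\lambda_1,\lambda_1')/n+O(k/n))^k$, and this estimate is false.

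For the two-row shape $\lambda=(n-r,r)$, the $k=2$ formula you quote gives
\[
s_\lambda(2)=\frac{(n-r)(n-r-1)+r(r-3)}{n(n-1)}=(1-x)^2+x^2+O(1/n).
\]
This equals $5/8$ at $x=1/4$ and about $1/2$ at $x=0.499$, whereas your claimed bound gives $(1-x)^2=9/16$ and about $1/4$. What your $k=2$ argument actually proves, via $\sum_i\lambda_i^2\le\lambda_1 n$, is only the first-power bound $|s_\lambda(2)|\le 1-x$. That is too weak: at $x=1/4$ it gives $|s_\lambda(2)|^{2t}\le(3/4)^{(1+o(1))n\log n}=\exp(-(0.287\ldots+o(1))n\log n)$. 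The target is $\exp(-2r\log n+(\frac23+o(1))r\log r)=\exp(-(\frac13+o(1))n\log n)$.

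The underlying reason is that, for bounded $k$, the character ratio is governed by the $k$-th power sum of \emph{all} Frobenius coordinates. The rows below the first therefore contribute a term of order $x^k$, which no bound of the form $(\lambda_1/n)^k$ can absorb. A Fomin--Lulov estimate does not help either, since it concerns permutations built entirely from $k$-cycles, not a single $k$-cycle.

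The missing ingredient is the one the paper imports: \Cref{lem: lem14 from Hough}. It bounds $|s_\lambda(k)|$ by $\sum_{a_i>kn^{1/2}}(a_i/n)^k+\sum_{b_i>kn^{1/2}}(b_i/n)^k$, up to a factor $1+n^{-1/4+o(1)}$; the additive error is negligible since $(1-x)^k\ge e^{-0.7k}$. With this, the argument goes as follows.
\begin{enumerate}
\item Since $a_1=n-r-\frac12$ and the remaining Frobenius coordinates sum to $r+\frac12$, the power sum is at most $(1-x)^k+(x+\frac1{2n})^k$.
\item On $[0,\frac12]$ we have $(1-x)^k+x^k\le 2^{-kx}$ for every $k\ge2$. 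For $k=2$ this follows from concavity of $-\log(1-2x+2x^2)$; for $k\ge3$ it follows from monotonicity of $\ell^k$-norms.
\item Hence $-2t\log|s_\lambda(k)|\ge(2\log 2-o(1))\,r\log n$. This beats the required $(\frac43+o(1))\,r\log n$ once $r\ge\delta n$.
\end{enumerate}

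Be aware that the paper's own treatment of the range $r\ge\min\{c_0,1/5\}n$ does not close as written. It uses the cruder bound $\exp(-\frac{rk}{2n})$ from \Cref{lem: lem15 from Hough}, and the displayed inequality there would require $\log r\ge 2\log n$. That bound only yields $-2t\log|s_\lambda(k)|\ge(1-o(1))\,r\log n$. So bounding the Frobenius sum directly, as above, is what actually makes this range work.
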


When the parameters $r,k$ lie in the range of \Cref{prop: ml r small k}, the bound in \Cref{lem: MT and ET} no longer works. Therefore, in order to prove \Cref{prop: ml r small k}, we need some new estimates, which we list below.


\begin{lemma}[{\cite[Lemma 14]{hough2016random}}]\label{lem: lem14 from Hough}
Suppose that $r:=n-\l_1\in[n^{5/6},0.499n]$. Furthermore, suppose $2\le k\le 6\log n$. Then, we have the bound
$$|s_\l(k)|\le\left(1+O\left(\frac{\log n}{n^{1/4}}\right)\right)\left[\sum_{a_i>kn^{1/2}}\frac{a_i^k}{n^k}+\sum_{b_i>kn^{1/2}}\frac{b_i^k}{n^k}\right]+O\left(\frac{\exp(-k)\log n}{n^{1/4}}\right).$$
\end{lemma}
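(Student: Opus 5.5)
The plan is to follow Hough's contour-integral approach. It rests on the Frobenius formula for the character ratio on a $k$-cycle, written in the Frobenius coordinates $\lambda=(a_1,\ldots,a_m\,|\,b_1,\ldots,b_m)$:
\begin{equation*}
s_\lambda(k)=\frac{-1}{k\, n^{\underline{k}}}\cdot\frac{1}{2\pi i}\oint \prod_{j=0}^{k-1}\bigl(z-j\bigr)\,\frac{F(z-k)}{F(z)}\,dz,
\qquad
F(z):=\prod_{i=1}^m\frac{z-a_i+\tfrac12}{z+b_i-\tfrac12},
\end{equation*}
possibly after a harmless shift of $z$ by $k/2$. The contour encloses all poles. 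Equivalently, $s_\lambda(k)$ is a signed sum of residues at the points $a_i+\frac12$ (arm type) and $-b_i+\frac12$ (leg type), coming from the zeros of $F$, together with the removal of rim hooks of length $k$.

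\textbf{Step 1: split the poles.} Split the poles into \emph{large} ones, with $a_i>kn^{1/2}$ or $b_i>kn^{1/2}$, and \emph{small} ones, with all Frobenius coordinates at most $kn^{1/2}$. Deform the contour into two parts:
\begin{enumerate}
\item small circles around each large pole;
\item a single circle $|z|=R$ that encloses all the small poles, with $R$ of order $kn^{1/2}$ (say $R=2kn^{1/2}$), chosen to avoid the large poles.
\end{enumerate}
The hypothesis $r\le 0.499n$ is used here. Since $\lambda_1=n-r\ge 0.501n$, there is exactly one huge arm $a_1\approx n-r$. Every other coordinate satisfies $\sum_{i\ge2}a_i+\sum_i b_i\le r+O(m)$, and the number of large coordinates is at most $r/(kn^{1/2})$.

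\textbf{Step 2: residues at large poles.} For a large pole at $a_i$, the residue is $\frac{1}{k\,n^{\underline k}}$ times a product of the shape
\begin{equation*}
a_i^{\underline{k}}\prod_{j\neq i}\frac{a_i-a_j+\cdots}{a_i-a_j-k+\cdots}\cdot\frac{a_i+b_j+\cdots}{a_i+b_j-k+\cdots},
\end{equation*}
which I will compare with $a_i^k/n^k$. Since $k\le 6\log n$ and the $a_j,b_j$ are separated by integers, each factor is $1+O(k/|a_i-a_j|)$. Summing over $j$ with the crude bound $\sum_j 1/|a_i-a_j|=O(\log n)$ gives $\exp(O(k\log n/\text{gap}))$. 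Near-collisions between large poles are handled by the observation that consecutive $a_j$ differ by at least $1$. This produces the factor $1+O(\log n/n^{1/4})$, the $n^{1/4}$ coming from the threshold $kn^{1/2}$ against $n^{\underline k}$ versus $n^k$. The leg-type poles at $-b_i$ are treated symmetrically.

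\textbf{Step 3: the big circle (main obstacle).} I expect this to be the hard step. On $|z|=R$ I need
\begin{equation*}
\Bigl|\prod_j(z-j)\Bigr|\le (R+k)^k
\quad\text{and}\quad
\Bigl|\frac{F(z-k)}{F(z)}\Bigr|\le \exp\bigl(O(1)\bigr),
\end{equation*}
the latter only after choosing the radius in a gap of the small poles so that no factor blows up. Then the contribution is at most $R(R+k)^k/(k\,n^{\underline k})$, roughly $(3k/n^{1/2})^k$, which is much smaller than $e^{-k}n^{-1/4}\log n$. The large poles outside the circle, including $a_1$, contribute factors $(z-a_i)/(z-a_i+k)=1+O(k/a_i)$. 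Their product over the at most $O(\sqrt n)$ outside poles must be controlled by $\exp(O(k\sum 1/a_i))$. I would first try Hough's averaging trick: integrate the radius $R$ over a dyadic range and use that the total mass of small poles is at most $r$, so that some radius is at distance $\gtrsim 1/\log n$ from all poles. If this crude choice loses too much, I would take $R$ slightly larger, of order $kn^{1/2}\log n$, and absorb the loss into $e^{-k}$ using $k\ge2$. Together, Steps 2 and 3 give the claimed bound.
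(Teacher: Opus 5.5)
The paper gives no proof of this lemma; it is quoted from Hough~\cite{hough2016random}. Your outline therefore has to stand on its own, and it has a genuine gap in Step~2: the residue at an individual large pole is \emph{not} comparable to $a_i^k/n^k$, so the large poles cannot be estimated one at a time. With the correct orientation, the pole at $a_i$ contributes
\[
\frac{(a_i-\tfrac12)^{\underline k}}{n^{\underline k}}\prod_{j\ne i}\frac{a_i-a_j-k}{a_i-a_j}\,\prod_{j}\frac{a_i+b_j}{a_i+b_j-k}
\]
to $s_\lambda(k)$. For $i=1$ this is exactly the main term in \eqref{eq: MT and ET}.

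Two equal rows give $a_j=a_i+1$. The $j$-th factor at $a_i$ is then $k+1$, while the residue at $a_j$ picks up the factor $1-k$. A run of $q$ equal rows gives the lowest pole of the run the factor $\prod_{s=1}^{q-1}\frac{s+k}{s}=\binom{q+k-1}{k}$. These residues cancel only after summation.

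A concrete example: take $k=2$, $r=0.4n$ and $\lambda=(n-r,r/2,r/2)$, so $a_1,a_2,a_3$ are all large.
\begin{itemize}
\item The residues at $a_3$ and $a_2$ are about $3\cdot 0.04$ and $-0.04$.
\item Only their sum matches $(a_2^2+a_3^2)/n^2\approx 0.08$, as it must, since $s_\lambda(2)=\sum_i(a_i^2-b_i^2)/(n(n-1))$ exactly.
\item Summing absolute values of residues gives about $0.36+0.12+0.04=0.52$, instead of $(1+o(1))\cdot 0.44$.
\end{itemize}
So the factor $1+O(\log n/n^{1/4})$ is lost by a constant, and the additive error cannot absorb this. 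Two further points in Step~2 do not help. Your remark that consecutive coordinates differ by at least $1$ only yields a bound of the form $\exp(O(k\log n))$. Comparing $n^{\underline k}$ with $n^k$ gives $1+O(k^2/n)$, which is no mechanism for an $n^{-1/4}$ error.

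What is missing is an argument that handles the residues of a whole run (more generally, a cluster) of large Frobenius coordinates jointly. One option is to integrate over a contour that stays away from the entire run. On such a contour the telescoped product $\prod_{s=0}^{q-1}\frac{z-A-s-k}{z-A-s}=\prod_{u=0}^{k-1}\frac{z-A-q-u}{z-A-u}$ can be controlled. You would then compare the result with $\sum_{\text{run}}a_i^k/n^k$, or absorb it into the dominant term $a_1^k/n^k$. This cancellation is the real content of the lemma.

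By contrast, the difficulty you flagged in Step~3 looks comparatively mild. The circle bound has polynomial room to spare, so the radius can be taken somewhat larger than $kn^{1/2}$. Large poles swallowed by the circle do no harm for an upper bound.

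Two minor corrections. With half-integer Frobenius coordinates and prefactor $\prod_{j=0}^{k-1}(z-j)$, the denominator of $F$ should be $z+b_i+\tfrac12$; you can check this on $\lambda=(n)$. Also, the $a$-factors in your Step~2 display are inverted.
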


\begin{lemma}[{\cite[Lemma 15]{hough2016random}}]\label{lem: lem15 from Hough}
Let $k\ge 2$, and let $r:=n-\l_1\in[n^{5/6},0.499n]$. Assuming $l(\l)\le \l_1$. Then, we have
$$\sum_{a_i>kn^{1/2}}\frac{a_i^k}{n^k}+\sum_{b_i>kn^{1/2}}\frac{b_i^k}{n^k}\le\exp\left(-\frac{rk}{2n}\right).$$
Moreover, there exists a constant $c_0>0$ such that the following holds. Suppose $r>c_0n$, then
$$\sum_{a_i>kn^{1/2}}\frac{a_i^k}{n^k}+\sum_{b_i>kn^{1/2}}\frac{b_i^k}{n^k}\le\exp\left(-\frac{rk}{n}+\frac{kr^2}{n^2}\right).$$
\end{lemma}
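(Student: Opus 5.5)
The statement immediately above is \cite[Lemma 15]{hough2016random}, which we may simply quote. It is still worth recording how I would prove it directly, since the argument is short. The plan is to use that the Frobenius coordinates satisfy $\sum_i a_i+\sum_i b_i=n$. I would isolate the single dominant term $a_1=\lambda_1-\tfrac12=n-r-\tfrac12$ and show that everything else is small by convexity of $x\mapsto x^k$.

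\emph{Step 1: the dominant term.} We have $a_1^k/n^k\le (1-r/n)^k\le \exp(-rk/n)$.

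\emph{Step 2: the remaining coordinates.} The coordinates $a_i$ ($i\ge 2$) and $b_i$ ($i\ge1$) are nonnegative and sum to $n-a_1=r+\tfrac12$. Each of them is at most $r+1$: indeed $a_i\le\lambda_2\le r$ for $i\ge 2$, and $b_i\le b_1<l(\lambda)\le r+1$ because $\lambda_1+l(\lambda)-1\le n$. The hypothesis $l(\lambda)\le\lambda_1$ guarantees that $a_1$, not $b_1$, is the largest coordinate, so the bookkeeping is consistent. Restricting to coordinates exceeding $kn^{1/2}$ only decreases the sums. Since $x\mapsto x^k$ is convex and each coordinate is capped, the sum of $k$-th powers of the remaining coordinates is at most $(r+1)^k$. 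Hence
$$\sum_{a_i>kn^{1/2}}\frac{a_i^k}{n^k}+\sum_{b_i>kn^{1/2}}\frac{b_i^k}{n^k}\le (1-x)^k+\Bigl(x+\tfrac1n\Bigr)^k,\qquad x:=r/n\in[n^{-1/6},0.499].$$

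\emph{Step 3: elementary inequalities.} For the first claim I need $(1-x)^k+(x+1/n)^k\le e^{-kx/2}$ for $k\ge 2$ and $x\le 0.499$. At $k=2$ this reads $1-2x+2x^2+O(1/n)\le e^{-x}$, which holds on the whole interval; for example, at $x=0.499$ it is $0.5\le 0.607$. For larger $k$, the ratio $\bigl((1-x)/e^{-x/2}\bigr)^k$ decreases in $k$, so the inequality persists.

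For the second claim, with $x>c_0$, I would write $(1-x)^k=\exp\bigl(-kx-kx^2/2-O(kx^3)\bigr)$. Then
$$e^{-kx+kx^2}-(1-x)^k\ \ge\ e^{-kx}\bigl(e^{kx^2}-e^{-kx^2/2}\bigr)\ \gtrsim\ e^{-kx}.$$
It remains to check that $x^k$ is smaller than this gap. Since $x\le 0.499<0.567$, where $x=e^{-x}$, we have $x<e^{-x}$. So $x^k/e^{-kx}=(xe^{x})^k$ decays geometrically in $k$, and at $k=2$ it is beaten by $e^{2x^2}-e^{-x^2}\ge e^{2c_0^2}-e^{-c_0^2}$ once $c_0$ is chosen suitably. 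This forces the claimed bound.

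\emph{Main obstacle.} The main obstacle is this last calibration: choosing $c_0$ and handling small $k$ uniformly with the $+1/n$ perturbation. I would treat $k=2,3$ by direct expansion and all larger $k$ by the geometric decay of $(xe^{x})^k$.

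\emph{Use of the lemma.} Feeding this bound into \Cref{lem: lem14 from Hough}, and transposing $\lambda$ via $s_{\lambda'}(k)=\pm s_\lambda(k)$ when $l(\lambda)>\lambda_1$, gives $|s_\lambda(k)|\le\exp(-rk/n+O(kr^2/n^2))$ in the range of \Cref{prop: ml r small k}. Raising to the power $2t$ with $2kt/n\ge 2\log n-\log\log n$ then yields that proposition.
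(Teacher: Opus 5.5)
\textbf{Overall.} The paper itself gives no proof here beyond the citation to Hough, so there is no route to compare against. Your reduction is the natural one. Since $\sum_i a_i+\sum_i b_i=n$ and $a_1=n-r-\tfrac12$, the other coordinates sum to $r+\tfrac12$. Moreover $\sum_j y_j^k\le(\sum_j y_j)^k$ for $y_j\ge 0$, so neither the cap nor the hypothesis $l(\lambda)\le\lambda_1$ is actually needed. It is cleaner to set $y:=(r+\tfrac12)/n\le\tfrac12$, so the left side is at most $(1-y)^k+y^k$ exactly. Your check of the first inequality is fine, provided you also note that $(x+\tfrac1n)e^{x/2}<1$. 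Then both terms, not just $(1-x)^k$, decay geometrically in $k$ relative to $e^{-kx/2}$.

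\textbf{The gap.} It is in the second inequality, the step you yourself flag as the obstacle. Your lower bound $e^{-kx}\bigl(e^{kx^2}-e^{-kx^2/2}\bigr)\gtrsim e^{-kx}$ on the slack needs $x$ bounded below. Comparing the uniform constant $e^{2c_0^2}-e^{-c_0^2}$ with $\sup_{x\le 0.499}(xe^{x})^2\approx 0.68$ then only succeeds for $c_0\gtrsim 0.45$. That covers the statement as printed, but only on a sliver of the range, and it is not the regime the paper needs.

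In the proof of \Cref{prop: ml r small k} the second bound is invoked for $r\le\min\{c_0,1/5\}n$. So ``$r>c_0n$'' in the statement is a slip for $r<c_0n$, as in Hough. Your own last paragraph also asserts $\exp(-rk/n+O(kr^2/n^2))$ throughout $[n^{5/6},0.499n]$.

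As $x\to 0$ the slack is only of order $kx^2e^{-kx}$. Your route would therefore need the pointwise inequality $(xe^{x})^2\le e^{2x^2}-e^{-x^2}$ on $(0,0.499]$. This happens to be true, but it is exactly the step you leave unverified.

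\textbf{A fix with no calibration.} For $k\ge 2$, use $\|\cdot\|_k\le\|\cdot\|_2$ followed by $1-u\le e^{-u}$:
\[
(1-y)^k+y^k\le\bigl((1-y)^2+y^2\bigr)^{k/2}=\bigl(1-2y(1-y)\bigr)^{k/2}\le e^{-ky(1-y)}.
\]
The map $u\mapsto u(1-u)$ is increasing on $[0,\tfrac12]$, and $r/n\le y\le\tfrac12$. Hence this is at most $\exp\bigl(-\tfrac{rk}{n}+\tfrac{kr^2}{n^2}\bigr)$ for every $r$ in the range, with no constant $c_0$ at all. The first inequality then follows from $\tfrac rn\bigl(1-\tfrac rn\bigr)\ge\tfrac{r}{2n}$.
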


\begin{proof}[Proof of \Cref{prop: ml r small k}]
Let $c_0$ be the same as in \Cref{lem: lem15 from Hough}. On the one hand, if $r\le \min\{c_0,1/5\}n$, we have
$$\frac{r^2}{n^2}\le\frac{r}{5n}\le\frac{r\log (n^{5/6})}{4n\log n}\le\frac{r\log r}{4n\log n}.$$
Therefore,
$$\log\left(\sum_{a_i>kn^{1/2}}\frac{a_i^k}{n^k}+\sum_{b_i>kn^{1/2}}\frac{b_i^k}{n^k}\right)\le\exp\left(-\frac{rk}{n}+\frac{kr^2}{n^2}\right)\le\exp\left(-\frac{rk}{n}+\frac{kr\log r}{4n\log n}\right).$$
In this case, applying \Cref{lem: lem14 from Hough}, we have
\begin{align}
\begin{split}
|s_\l(k)|&\le\left(1+O\left(\frac{\log n}{n^{1/4}}\right)\right)\exp\left(-\frac{rk}{n}+\frac{kr\log r}{4n\log n}\right)+O\left(\frac{\exp(-k)\log n}{n^{1/4}}\right)\\
&\le\exp\left(-\frac{rk}{n}+\frac{rk\log r}{3n\log n}\right).
\end{split}
\end{align}
Here, the last line follows because $r\ge n^{5/6}$ so that $\frac{r\log r}{n\log n}\ge n^{-1/6+o(1)}$. On the other hand, if $r\ge \min\{c_0,1/5\}n$, we already have $-\frac{rk}{2n}\le-\frac{rk}{n}+\frac{rk\log r}{4n\log n}$ when $n$ is sufficiently large, and therefore the inequality $|s_\l(k)|\le\exp\left(-\frac{rk}{n}+\frac{rk\log r}{3n\log n}\right)$ follows the same way as the above. Thus, when $n$ is sufficiently large, the inequality
$$|s_\l(k)|^{2t}\le\exp\left(-\frac{2rkt}{n}+\frac{2r\log rkt}{3n\log n}\right)=\exp(-2r\log n+(2/3+o(1))r\log r)$$
always holds.
\end{proof}

The following result is quoted from \cite[Theorem 3.5]{nestoridi2022limit}. 

\begin{prop}\label{prop: short first row and large k}
Suppose that $6\log n<k=o(n/\log n)$, and let $\theta:=0.68$, so that $\exp(-\theta)>0.506$. Consider $\lambda\vdash n$ with $b_1\le a_1<e^{-\theta}n$. Then, we have
$$|s_\lambda(k)|\le \exp(-0.51k).$$
\end{prop}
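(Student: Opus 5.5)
The plan is to follow the contour-integral approach of Hough \cite{hough2016random}, as used by Nestoridi and Olesker-Taylor \cite{nestoridi2022limit}. Write $\lambda=(a_1,\ldots,a_m\,|\,b_1,\ldots,b_m)$ in Frobenius notation. The starting point is the Frobenius formula for the character ratio at a $k$-cycle, written as a contour integral:
$$
s_\lambda(k)\;=\;-\frac{1}{k\,n^{\underline k}}\cdot\frac{1}{2\pi i}\oint \prod_{j=1}^{k}\Bigl(z-j+\tfrac12\Bigr)\prod_{i=1}^{m}\frac{(z-a_i-k)(z+b_i)}{(z-a_i)(z+b_i-k)}\,dz .
$$
The contour encloses all poles $a_i$ and $k-b_i$. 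I will take this identity from Hough's paper, where it is the basis of \Cref{lem: MT and ET}. The whole task is then to choose a good contour and bound the integrand on it.

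\textbf{Step 1 (contour choice).} Since $b_1\le a_1<e^{-\theta}n$, every pole lies in the disc $|z|\le a_1+k$. I will integrate over a circle, or a slightly modified circle, of radius $R\approx a_1+k$ centred near $k/2$. The circle should stay at distance $\gtrsim 1$ from every pole; this is always possible by pigeonholing among the $O(k)$ candidate radii.

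\textbf{Step 2 (bounding the rational product).} On this contour I compare the factor $\prod_i\frac{(z-a_i-k)(z+b_i)}{(z-a_i)(z+b_i-k)}$ with $1$. Each individual factor is a ratio of two linear terms shifted by $k$. Grouping the factors and using $\sum_i (a_i+b_i)=n$, I expect the logarithm of the product to be at most $O(k\log n)$, and more precisely of size $\exp(O(k\sum_i \frac{1}{|z-a_i|}))$. This step exploits the separation of the poles from the contour and the fact that the $a_i$ are distinct half-integers.

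\textbf{Step 3 (main factor).} The polynomial factor $\prod_{j}(z-j+\frac12)$ divided by $n^{\underline k}$ is bounded on the contour by
$$
\frac{(R+k)^{\underline k}}{n^{\underline k}}\lesssim\Bigl(\frac{a_1+2k}{n}\Bigr)^k\exp\bigl(O(k^2/n)\bigr)\le \exp\bigl(-(\theta-o(1))k\bigr),
$$
using $a_1<e^{-\theta}n$ and $k=o(n/\log n)$. The contour length contributes only $O(n)$ and the prefactor $1/k$ helps.

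\textbf{Step 4 (combining).} This step gives $|s_\lambda(k)|\le n^{O(1)}\,e^{O(\cdot)}\,e^{-0.68k}$. Because $k>6\log n$, any factor $n^{C}$ is at most $e^{Ck/6}$. Thus the target $e^{-0.51k}$ follows provided the total loss from Steps 1--2 is at most $e^{0.17k}$.

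\textbf{Main obstacle.} The hard step is Step 2: the bound on the rational product must be uniform over all $\lambda$ with short first row, including those with many Frobenius parts. A crude bound of the form $n^{O(1)}$ per factor is far too weak. I would first try the approach of grouping consecutive hooks: pair each $a_i$ with $a_i+k$ so that the product telescopes, up to the discrepancy between the multisets $\{a_i\}$ and $\{a_i+k\}$. That discrepancy involves at most $O(n/k)$-controlled terms, and each of these terms contributes $O(1)$ in logarithm on a contour of radius $\asymp n$. If this step falls short, I would fall back on Hough's splitting of the hooks into large ones ($a_i>kn^{1/2}$) and small ones, exactly as in \Cref{lem: lem14 from Hough}. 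In that approach the small hooks contribute $1+o(1)$ and the large hooks are handled by the radius argument.
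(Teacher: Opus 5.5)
\textbf{There is a genuine gap, at Step 2.} You flag Step 2 as the main obstacle and leave it unresolved, but it is the entire content of the estimate. With your contour it cannot be closed by the means you indicate.

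First, the budget is much tighter than you state. The prefactor $R/k$ coming from the contour length is of order $n$, which for $k$ near $6\log n$ is about $e^{k/6}$. The polynomial factor gives at best $e^{-0.68k}$. So the rational product must be $e^{o(k)}$ (concretely, at most about $e^{0.003k}$), not $e^{0.17k}$.

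None of your candidate bounds comes close to this:
\begin{itemize}
\item A logarithm of size $O(k\log n)$ is fatal.
\item The bound $\exp(O(k\sum_i|z-a_i|^{-1}))$ fails on a circle of radius $\approx a_1+k$, which passes within $O(k)$ of the largest poles. When many $a_i$ are packed with spacing $1$ just below $a_1$ (e.g.\ a rectangular $\lambda$ with $n^{0.4}$ rows), this bound is of order $e^{ck\log n}$.
\item The telescoping heuristic, with $O(n/k)$ discrepancy terms each of size $O(1)$, loses $e^{O(n/k)}$. This is far more than $e^{k}$ when $k\approx 7\log n$.
\end{itemize}
Rescuing your contour would need a case analysis trading the crowding of the $a_i$ against the smaller main factor $((a_1+2k)/n)^k$, and the proposal contains none. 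The fallback to \Cref{lem: lem14 from Hough} does not help either: that lemma is for $k\le 6\log n$, which is the complementary regime.

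\textbf{The missing idea is to keep the contour far from all poles, using the slack in the hypothesis $a_1<e^{-\theta}n$.} Integrate over $|z|=R:=e^{-\theta'}n$ with $\theta'=0.677$.
\begin{itemize}
\item All poles $a_i$ and $k-b_i$ lie in $|z|\le\max(a_1,k)<e^{-\theta}n$. Hence each pole is at distance at least $(e^{-0.677}-e^{-0.68})n>10^{-3}n$ from the contour, for large $n$.
\item Write the rational factor as $\prod_i\bigl(1-\frac{k}{z-a_i}\bigr)\bigl(1+\frac{k}{z+b_i-k}\bigr)$ and expand the logarithms. The linear term is $k\sum_i\frac{k-a_i-b_i}{(z-a_i)(z+b_i-k)}=O(k/n+mk^2/n^2)$, using $\sum_i(a_i+b_i)=n$. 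The quadratic terms are $O(mk^2/n^2)$.
\item Since $m\le\sqrt n$ and $k=o(n/\log n)$, all of this is $o(k)$, so the rational factor is $e^{o(k)}$.
\end{itemize}
Then
\[
|s_\lambda(k)|\le\frac{R}{k}\cdot\frac{(R+k)^k}{n^{\underline k}}\,e^{o(k)}\le n\,e^{-\theta'k+O(k^2/n)+o(k)}\le e^{-(\theta'-\frac16-o(1))k}\le e^{-0.51k},
\]
using $n<e^{k/6}$.

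This is exactly the paper's proof, obtained by inspecting Hough's proof of Theorem 5(b). It also explains the constants: you give up $\theta\to\theta'$ in the main factor to make the rational factor trivial, and $\theta'-\frac16>0.51$.
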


\begin{proof}
Choose $\theta':=0.677$, so that $\theta'-\frac{1}{6}>0.51$. Now, inspection of the proof of \cite[Theorem 5(b)]{hough2016random} gives the bound
$$|s_\l(k)|\le\exp(k(-\theta'+1/6+o(1)))\le\exp(-0.51k).$$
\end{proof}

\begin{rmk}
In the original version, \cite[Theorem 5(b)]{hough2016random} provides the slightly weaker bound
\[
|s_\lambda(k)| \le \exp(-0.5\,k).
\]
For our technical purposes this estimate is not sufficient (see the proof of \Cref{thm: removal}),
since we need a strict constant improvement in the exponent (the improvement may be
arbitrarily small, but must be positive). Fortunately, by following the proof of
\cite[Theorem~5(b)]{hough2016random} and optimizing the numerical constants, one can obtain such
an improvement with no additional ideas.
\end{rmk}

\begin{prop}[{\cite[Lemma 3.6]{nestoridi2022limit}}]\label{prop: short first row}
The following statement holds when $n$ is sufficiently large. Assume that $2\le k\le 6 \log n$. Let $\lambda\vdash n$ such that $l(\lambda)\le\lambda_1$ and $r:=n-\lambda_1\in[\frac{1}{3}n,n]$. Then, we have
$$|s_\lambda(k)|\le \exp\left(-\frac{3rk}{5n}\right).$$
\end{prop}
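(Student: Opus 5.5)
This estimate is cited from \cite[Lemma 3.6]{nestoridi2022limit}, but it can be recovered from the two ingredients already recorded above, \Cref{lem: lem14 from Hough} and \Cref{lem: lem15 from Hough}. Split according to the size of $r$. On the range $r\in[\frac13 n,0.499n]$ both lemmas apply directly, since $l(\lambda)\le\lambda_1$ and $2\le k\le 6\log n$. Combining them gives
\[
|s_\lambda(k)|\le\Bigl(1+O\bigl(\tfrac{\log n}{n^{1/4}}\bigr)\Bigr)\exp\Bigl(-\frac{rk}{n}+\frac{kr^2}{n^2}\Bigr)+O\Bigl(\frac{e^{-k}\log n}{n^{1/4}}\Bigr)
\]
whenever $r>c_0n$, and the weaker exponent $-\frac{rk}{2n}$ otherwise.

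\textbf{Case $r\in[\frac13 n,0.499n]$.} Write $x=r/n\in[1/3,0.499]$. We have $x-x^2\ge x\cdot 0.501>\frac35x$, so the main term is at most $\exp(-\frac{rk}{n}(1-x))\le\exp(-0.501\,\frac{rk}{n})$. This is strictly better than $\exp(-\frac35\frac{rk}{n})$? No: $0.501<0.6$, so this bound is too weak. One must instead use the fine form of Hough's argument directly. The sum $\sum_{a_i>kn^{1/2}}a_i^k/n^k$ is controlled by $\max a_i/n$ times the total $\sum a_i/n\le 1$, which gives a bound of order $(a_1/n)^{k-1}$. With $a_1\le n-r$ and $a_1+b_1\le n$, this is at most $(1-x)^{k-1}$ plus the column contribution $(b_1/n)^{k-1}$.

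The plan is to bound
\[
(1-x)^{k-1}\,\frac{a_1+\cdots}{n}+\Bigl(\frac{b_1}{n}\Bigr)^{k-1}\frac{\sum b_i}{n},
\]
using $\sum a_i+\sum b_i=n$. The convexity of $y\mapsto y^{k-1}$ then shows that the worst case puts mass $n-r$ on $a_1$ and the rest on $b_1\le a_1$. This yields $|s_\lambda(k)|\lesssim (1-x)^k+x^k$. For $k\ge2$ and $x\in[1/3,1/2]$ one checks $(1-x)^k+x^k\le e^{-\frac{3}{5}xk}\cdot(1-\delta)$ except possibly at $k=2$, where $(1-x)^2+x^2=1-2x+2x^2\le e^{-1.2x}$ holds on $[1/3,1/2]$ with room. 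The additive error $O(e^{-k}\log n/n^{1/4})$ is absorbed because it is much smaller than $e^{-0.6k}$ when $k\le 6\log n$.

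\textbf{Case $r\in[0.499n,n]$.} Here \Cref{lem: lem14 from Hough} is not stated, so I would redo Hough's Frobenius-coordinate bound, the Murnaghan–Nakayama ratio estimate underlying Lemma 14, on the whole range $r\le n$. The constraint $l(\lambda)\le\lambda_1$ forces $b_1\le a_1\le n-r$, hence $\max(a_1,b_1)/n\le 1-x\le 1/2$. The same convexity argument then gives $|s_\lambda(k)|\le 2(1-x)^{k-1}\cdot\frac12+o(\cdot)\le (1-x)^{k-1}$, roughly. One needs $(1-x)^{k-1}\le e^{-0.6xk}$ for $x\in[1/2,1]$. For $k\ge3$ this follows from $\log(1-x)\le -x$ with slack. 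For $k=2$, the exact formula $s_\lambda(2)=\frac{\sum a_i^2-\sum b_i^2}{n(n-1)}$ (up to lower order) gives $|s_\lambda(2)|\le (1-x)^2+\ldots$ directly.

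\textbf{Main obstacle.} The hardest step is making the error terms uniform as $r\to n$ and at small $k$, especially $k=2,3$. In that regime the margin between $(1-x)^k$ and $e^{-0.6xk}$ is thin near $x=1/3$ and the $n^{-1/4}$ error terms must be tracked carefully. I would handle $k\in\{2,3\}$ separately via the explicit content formula for $s_\lambda(k)$.
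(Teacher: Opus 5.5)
The paper gives no argument for this statement; it imports it verbatim from \cite[Lemma 3.6]{nestoridi2022limit}. Your derivation from \Cref{lem: lem14 from Hough} is sound on the range $r\in[\frac13n,0.499n]$. Write $x=r/n$. All Frobenius coordinates satisfy $a_i,b_i\le a_1<(1-x)n$ (using $l(\lambda)\le\lambda_1$), and $\sum_i(a_i+b_i)=n$. Majorization therefore gives $\sum_i(a_i/n)^k+\sum_i(b_i/n)^k\le(1-x)^k+x^k$. This is at most $(1-\delta)e^{-3xk/5}$ uniformly in $k\ge2$; the worst case is $k=2$, $x\to\frac12$, with ratio about $0.91$. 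Both error terms of \Cref{lem: lem14 from Hough} are then absorbed. The cruder $(a_1/n)^{k-1}$ bound you mention first would leave only about half a percent of room at $k=2$, $x=\frac13$, so the majorization step is the one you actually need.

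The genuine gap is the range $r\in[0.499n,n]$, where you have no character estimate at all. \Cref{lem: lem14 from Hough}, as available, stops at $0.499n$, and ``redo Hough's bound on the whole range'' is a plan, not an argument. Your inequality $(1-x)^{k-1}\le e^{-3xk/5}$ is correct, but it only helps once you know
\[
|s_\lambda(k)|\le\bigl(1+o(1)\bigr)\Bigl[\sum_i\Bigl(\frac{a_i}{n}\Bigr)^k+\sum_i\Bigl(\frac{b_i}{n}\Bigr)^k\Bigr]+O\Bigl(\frac{e^{-k}\log n}{n^{1/4}}\Bigr)
\]
uniformly in $2\le k\le 6\log n$ for $r\ge 0.499n$, and you never establish this. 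Your small-$k$ fallback does work. The identity $s_\lambda(2)=\sum_i(a_i^2-b_i^2)/(n(n-1))$ is exact and gives $|s_\lambda(2)|\le\lambda_1/(n-1)$. Likewise $n^{\underline 3}s_\lambda(3)=\sum_i(a_i^3+b_i^3)-\frac32n^2+\frac54n$ settles $k=3$.

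For $k$ growing up to $6\log n$, however, the lower-degree corrections to $n^{\underline k}s_\lambda(k)$ grow in number and size with $k$. Controlling them uniformly is exactly what Hough's contour estimate provides, so $4\le k\le 6\log n$ is left open. This is not a peripheral case. The missing range is precisely where the paper invokes the proposition (Regime III in the proof of \Cref{thm: removal}). The range you do prove is never used there, since it is covered by \Cref{prop: ml r small k}. To close the gap you need a version of Hough's Lemma 14 valid for all $r\ge n^{5/6}$ with $l(\lambda)\le\lambda_1$, with proof or precise citation. Alternatively, use the citation to \cite{nestoridi2022limit}, as the paper does.
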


\begin{lemma}[{\cite[Lemma 3.3]{jain2024hitting}}]\label{lem: dimension bound}
Let $\l\vdash n$, and $r:=n-\l_1$. Then, we have
\begin{enumerate}
\item $d_\lambda\le\binom{n}{r}\cdot \sqrt{r!}\le n^r/\sqrt{r!}.$
\item When $r\le \log n$, we have
$$d_\l=\binom{n}{r}\cdot d_{\l^*}\left(1-\frac{r}{n}+O(n^{-2+o(1)})\right).$$
\end{enumerate}
\end{lemma}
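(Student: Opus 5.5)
The plan is to derive both parts from a combinatorial description of $d_\lambda$ as the number of standard Young tableaux of shape $\lambda$, together with the hook length formula. Throughout, write $r=n-\lambda_1$, so that $\lambda^*=(\lambda_2,\lambda_3,\ldots)\vdash r$.

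\emph{Part (1).} I will construct an injection from standard Young tableaux of shape $\lambda$ into pairs $(A,T^*)$. Here $A\subset[n]$ is a set with $|A|=r$, and $T^*$ is a standard Young tableau of shape $\lambda^*$.
\begin{itemize}
\item Given a tableau $T$ of shape $\lambda$, let $A$ be the set of entries lying outside the first row.
\item Let $T^*$ be the restriction of $T$ to rows $2,3,\ldots$, relabelled order-preservingly by $1,\ldots,r$.
\item The map is injective: the first row must contain $[n]\setminus A$ in increasing order, and $T^*$ together with $A$ recovers rows $2,3,\ldots$.
\end{itemize}
This gives $d_\lambda\le\binom{n}{r}d_{\lambda^*}$. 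Since $\sum_{\mu\vdash r}d_\mu^2=r!$, we get $d_{\lambda^*}\le\sqrt{r!}$. Finally $\binom nr\le n^r/r!$, which yields $\binom nr\sqrt{r!}\le n^r/\sqrt{r!}$.

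\emph{Part (2).} I will compute the exact ratio $d_\lambda/\bigl(\binom nr d_{\lambda^*}\bigr)$ using the hook length formula.
\begin{itemize}
\item A cell in row $i\ge2$ has the same hook in $\lambda$ as in $\lambda^*$, because the first row does not contribute to hooks of cells below it.
\item So $d_\lambda=n!/\bigl(H_1\cdot\prod_{c\in\lambda^*}h(c)\bigr)$, where $H_1=\prod_{j=1}^{\lambda_1}(\lambda_1-j+\lambda'_j)$ is the product of first-row hooks.
\item Combined with $d_{\lambda^*}=r!/\prod_{c\in\lambda^*}h(c)$, this gives $d_\lambda=\binom nr d_{\lambda^*}\cdot\lambda_1!/H_1$.
\item For $j>\lambda_2$ we have $\lambda'_j=1$, so the hook equals $\lambda_1-j+1$.
\end{itemize}
Hence
\[
\frac{\lambda_1!}{H_1}=\prod_{j=1}^{\lambda_2}\frac{\lambda_1-j+1}{\lambda_1-j+\lambda'_j}=\prod_{j=1}^{\lambda_2}\Bigl(1-\frac{\lambda'_j-1}{\lambda_1-j+\lambda'_j}\Bigr).
\]

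\emph{The estimate.} This is the only real step. Each denominator equals $n-O(r)$, because $\lambda_1=n-r$, $j\le\lambda_2\le r$ and $\lambda'_j\le r+1$. Therefore each factor is $1-\frac{\lambda'_j-1}{n}+O\bigl(\frac{r(\lambda'_j-1)}{n^2}\bigr)$. The quantities $\lambda'_j-1$ are nonnegative and count the cells of column $j$ below the first row, so $\sum_j(\lambda'_j-1)=r$. Expanding the product, the linear term is $-r/n$. The remaining error terms, including cross terms, are bounded by $O(r^2/n^2)$. Since $r\le\log n$, this is $O(n^{-2+o(1)})$, which gives the claimed formula.
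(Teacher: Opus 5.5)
Your proof is correct and complete. The injection $T\mapsto(A,T^*)$ gives $d_\lambda\le\binom{n}{r}d_{\lambda^*}$, and the hook-length identity $d_\lambda=\binom{n}{r}d_{\lambda^*}\prod_{j=1}^{\lambda_2}\bigl(1-\frac{\lambda'_j-1}{\lambda_1-j+\lambda'_j}\bigr)$, together with $\sum_j(\lambda'_j-1)=r$ and denominators of size $n-O(r)$, yields the factor $1-\frac{r}{n}+O(r^2/n^2)$.

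The paper gives no proof of its own here; it only cites Jain--Sawhney. Your argument is the standard route to this statement. A small remark: every factor in that product lies in $(0,1]$, so the exact identity already gives $d_\lambda\le\binom{n}{r}d_{\lambda^*}$ for all $\lambda$, and the injection in Part (1) is optional.
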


The following bound generalizes \cite[Lemma 3.4]{jain2024hitting}. It shows that the sum $\sum_{\lambda\vdash n}d_\l^2|s_\l(k)|^{2t}$ is mainly contributed by partitions in $\mathcal{L}$. 

\begin{thm}\label{thm: removal}
Suppose that $2\le k\le o(n/\log n)$. Moreover, suppose that $t':=t-\frac{n\log n}{k}$ satisfies $|t'|\le \frac{n(\log\log n-\omega(1))}{2k}$. Then, we have
$$\sum_{\substack{\lambda\vdash n\\\lambda\notin \mc{L}}}d_\l^2|s_\l(k)|^{2t}=n^{-\omega(1)}.$$
\end{thm}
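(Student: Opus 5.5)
The plan is to reduce to partitions with $l(\lambda)\le\lambda_1$ and then split by the range of $r=n-\lambda_1$, pairing the character bounds above with the dimension bound of \Cref{lem: dimension bound}.

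\emph{Reduction.} Since $d_\lambda=d_{\lambda'}$ and $|s_{\lambda'}(k)|=|s_\lambda(k)|$, and $\mathcal L$ is closed under conjugation, it suffices (up to a factor $2$) to sum over $\lambda\notin\mathcal L$ with $l(\lambda)\le\lambda_1$. For such $\lambda$ we have $r>\log n$; moreover $\lambda_1\ge\sqrt n$, so $r\le n-\sqrt n$.

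\emph{Main estimate.} By \Cref{lem: dimension bound}, $d_\lambda^2\le n^{2r}/r!=\exp(2r\log n-r\log r+r+o(r))$. The number of partitions with a given $r$ is $p(r)=e^{O(\sqrt r)}$. I will show that in every range
$$d_\lambda^2|s_\lambda(k)|^{2t}\le \exp(-\omega(r))$$
with the $\omega$ uniform, or at least $\le \exp(-c\,r\log r)$. Summing $p(r)\exp(-\omega(r))$ over $r\ge\log n$ then gives $n^{-\omega(1)}$.

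\emph{Case split by $r$ and $k$.}
\begin{enumerate}
\item For $r\in[\log n,n^{5/6}]$, or for $r\in[n^{5/6},0.499n]$ with $k\ge 6\log n$, \Cref{prop: middle r and ml r and large k} gives $|s_\lambda(k)|^{2t}\le\exp(-2r\log n+r\log r-\omega(r))$. The product with $d_\lambda^2$ is then $\exp(r+o(r)-\omega(r))=\exp(-\omega(r))$.
\item For $r\in[n^{5/6},0.499n]$ with $k\le 6\log n$, \Cref{prop: ml r small k} leaves $\exp(-(1/3-o(1))r\log r+r)$, which is far smaller still.
\item For $r\ge 0.499n$ with $k\le6\log n$: when $r\ge n/3$, \Cref{prop: short first row} gives $|s_\lambda|^{2t}\le\exp(-\tfrac65 r\cdot 2kt/(2n))=\exp(-(6/5)(1-o(1))r\log n)$, since $kt/n=(1+o(1))\log n$. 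Against $d_\lambda^2\le n^{2r}/r!$ with $r\log r=(1-o(1))r\log n$ for $r\ge 0.499n$, the exponent is $2-6/5-1+o(1)<0$, so we get $\exp(-\Omega(r\log n))$.
\item For $r\ge0.499n$ with $k>6\log n$, I will use the cruder bound $d_\lambda^2\le n!\le \exp(n\log n)$ together with \Cref{prop: short first row and large k}.
\end{enumerate}

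\emph{Main obstacle.} The last case is the delicate one. Its hypothesis $b_1\le a_1<e^{-\theta}n$ holds because $a_1\approx\lambda_1\le n-0.499n<0.506n$, and $b_1\le a_1$ follows from $l(\lambda)\le\lambda_1$. The proposition then gives $|s_\lambda|^{2t}\le\exp(-1.02kt)=\exp(-1.02(1+o(1))n\log n)$. This beats $n!$ only thanks to the strict improvement $0.51>1/2$, which is exactly why the remark after \Cref{prop: short first row and large k} is needed. Summing over at most $p(n)=e^{O(\sqrt n)}$ partitions leaves $\exp(-(0.02-o(1))n\log n)$, which is $n^{-\omega(1)}$.

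I expect the main care to go into uniformity of the $-\omega(r)$ terms at the boundary $r\approx\log n$, where $\omega(r)$ must dominate both $\sqrt r$ and the $\log n$ number of terms. Here the $\omega(1)$ slack in the hypothesis on $t'$ supplies an extra $-r\cdot\omega(1)$ factor. Since $r\ge\log n$, this makes every summand at most $n^{-\omega(1)}$, and $\sum_r e^{O(\sqrt r)-r\omega(1)}$ converges to $n^{-\omega(1)}$.
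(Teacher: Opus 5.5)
Your proposal is correct and follows the paper's proof essentially step for step: the same conjugation reduction to $l(\lambda)\le\lambda_1$, the same split into $r\in[\log n,n^{5/6}]$, $[n^{5/6},0.499n]$, $[0.499n,n]$ further divided by whether $k\le 6\log n$, the same four character bounds paired with $d_\lambda^2\le n^{2r}/r!$ and $p(r)=e^{O(\sqrt r)}$, and the same reliance on $0.51>1/2$ against $n!$ in the last case. Your bookkeeping is slightly cleaner than the printed version: your exponent $-(1/3-o(1))r\log r+r$ has the right sign, whereas the paper's displays $\exp((1/3-o(1))r\log r)$ and $\exp(0.1r\log r+O(r))$ drop a minus sign. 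In the last case, bounding each $d_\lambda^2\le n!$ and multiplying by $p(n)$, rather than using $\sum_\lambda d_\lambda^2=n!$, is an inessential variation.
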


\begin{proof}

\medskip
\noindent\textbf{Step 1: reduce to $l(\lambda)\le \lambda_1$.}
Partition conjugation preserves dimensions ($d_{\lambda'}=d_\lambda$), and since the step measure is a
class function supported on $k$-cycles we have $|s_{\lambda'}(k)|=|s_\lambda(k)|$.
Moreover, $\lambda\notin\mc L$ implies $\lambda'\notin\mc L$ by symmetry of \eqref{eq: long first row and column partitions}.
Hence it suffices to bound
\begin{equation}\label{eq:reduce-half}
\sum_{\substack{\lambda\vdash n\\ \lambda\notin\mc L\\ l(\lambda)\le \lambda_1}}
d_\lambda^2\,|s_\lambda(k)|^{2t},
\end{equation}
and the full sum is at most twice~\eqref{eq:reduce-half}.

\medskip
\noindent\textbf{Step 2: split by the size of $r=n-\lambda_1$.}
Fix $\lambda$ with $l(\lambda)\le \lambda_1$ and $\lambda\notin\mc L$, and write $r:=n-\lambda_1$.
Then, we have $r\ge \log n$. Denote by $p(r)$ the partition number of $r$. Recall that the Hardy-Ramanujan formula for partitions gives the asymptotic estimate $p(r)\lesssim\exp(\pi\sqrt{2r/3})$. Moreover, we have the estimate $d_\l^2\le \frac{n^{2r}}{r!}$ given in \Cref{lem: dimension bound}. We will use these bounds in the following three regimes.

\medskip
\noindent\textbf{Regime I: $r\in[\log n,n^{5/6}] $.}
Applying \Cref{prop: middle r and ml r and large k}, we have
$$|s_\l(k)|^{2t}\le \exp\left(-2r\log n+r\log r-\omega(r)\right),$$
and therefore
\[
\sum_{\l_1=n-r}d_\lambda^2\,|s_\lambda(k)|^{2t}
 \le \frac{n^{2r}}{r!}\cdot p(r)\cdot|s_\l(k)|^{2t}\le\exp(-\omega(r))
 = n^{-\omega(1)}.
\]
Summing over $r$ gives $n^{-\omega(1)}$.
\medskip

\noindent\textbf{Regime II: $r\in[n^{5/6}, 0.499n]$.}
We split further according to $k$.

\begin{enumerate}
\item Suppose $2\le k\le 6\log n$. Applying \Cref{prop: ml r small k}, we have
\[
|s_\lambda(k)|^{2t}
 \le \exp(-2r\log n+(2/3+o(1))r\log r),
\]
and therefore 
\[
\sum_{\l_1=n-r}d_\lambda^2\,|s_\lambda(k)|^{2t}
 \le \frac{n^{2r}}{r!}\cdot p(r)\cdot|s_\l(k)|^{2t}\le\exp((1/3-o(1))r\log r)
 = n^{-\omega(1)}.
\]
Summing over $r$ gives $n^{-\omega(1)}$.
\item Suppose $6\log n<k\le o(n/\log n)$. Applying \Cref{prop: middle r and ml r and large k}, we have
$$|s_\l(k)|^{2t}\le \exp\left(-2r\log n+r\log r-\omega(r)\right),$$
and therefore
\[
\sum_{\l_1=n-r}d_\lambda^2\,|s_\lambda(k)|^{2t}
 \le \frac{n^{2r}}{r!}\cdot p(r)\cdot|s_\l(k)|^{2t}\le\exp(-\omega(r))
 = n^{-\omega(1)}.
\]
Summing over $r$ gives $n^{-\omega(1)}$.
\end{enumerate}

\medskip
\noindent\textbf{Regime III: $r\in[0.499n, n]$.}
We split further according to $k$. 
\begin{enumerate}
\item Suppose $2\le k\le 6\log n$. Applying \Cref{prop: short first row}, we have
\[
|s_\lambda(k)|^{2t}
 \le \exp\left(-\frac{6rkt}{5n}\right)
\le\exp(-1.1r\log n)
\]
when $n$ is sufficiently large, and therefore 
\[
\sum_{\l_1=n-r}d_\lambda^2\,|s_\lambda(k)|^{2t}
 \le \frac{n^{2r}}{r!}\cdot p(r)\cdot\exp(-1.1r\log n)=\exp(0.1r\log r+O(r))
 = n^{-\omega(1)}.
\]
Summing over $r$ gives $n^{-\omega(1)}$.
\item Suppose $6\log n<k\le o(n/\log n)$. Applying \Cref{prop: short first row and large k}, we have
\[
|s_\lambda(k)|^{2t}
 \le \exp(-1.02 kt)
 \le \exp(-1.01n\log n)
\]
when $n$ is sufficiently large, and therefore
\[
\sum_{\l_1=n-r} d_\lambda^2\,|s_\lambda(k)|^{2t}
\ \le\ \Big(\sum_{\lambda\vdash n} d_\lambda^2\Big)\exp(-1.01n\log n) = n!\exp(-1.01n\log n)
 = n^{-\omega(1)}.
\]
Summing over $r$ gives $n^{-\omega(1)}$.
\end{enumerate}
\medskip

Combining the three regimes discussed above, we complete the proof.
\end{proof}

\subsection{Representation-theoretic preliminaries for the Fourier transform of $\nu_{n,k}^t$}

Fix integers $n\ge 1$ and $0\le M\le n/3$. In this case, for $\l\vdash n$, at most one of the following holds:
\begin{enumerate}
\item $\l_1\ge n-M$.
\item $l(\l)\ge n-M$.
\end{enumerate}

Define a probability measure $\xi_M$ (resp. $\xi_M^c$) on $\mathfrak{A}_n$ as follows:
choose a subset $S\subset[n]$ uniformly among all subsets of size $M$, then sample a permutation
$\pi$ uniformly from $\A_{[n]\setminus S}$ (resp. $\A_{[n]\setminus S}^c$ ), and extend it to an element of $\mathfrak{A}_n$ (resp. $\mathfrak{A}_n^c$)
by fixing every point of $S$. The following lemma gives an expression for the Fourier transform of $\xi_M$ and $\xi_M^c$.

\begin{thm}[Fourier transform of $\xi_M$ and $\xi_M^c$]
\label{thm :zetaM_fourier_An}
We have
\[
\widehat{\xi_M}(\lambda)
=
\frac{K_{\lambda,\mu}+K_{\lambda',\mu}}{d_\lambda}\,\Id_{d_\lambda}, \quad\widehat{\xi_M^c}(\lambda)
=
\frac{K_{\lambda,\mu}-K_{\lambda',\mu}}{d_\lambda}\,\Id_{d_\lambda},
\]
where
\[
\mu=\mu_M:=(n-M,1,1,\dots,1)\vdash n,
\]
and $K_{\alpha,\mu}$ denotes the Kostka number.

Moreover, if $\l_1<n-M$ and $l(\l)<n-M$, then $\widehat{\xi_M}(\lambda)=\widehat{\xi_M^c}(\lambda)=0$. If $\l_1\ge n-M$, we have
$$\widehat{\xi_M}(\lambda)=\binom{M}{\,n-\lambda_1\,}\frac{
d_{\lambda^*}}{d_\lambda}\,\Id_{d_\lambda},\quad\widehat{\xi_M^c}(\lambda)=\binom{M}{\,n-\lambda_1\,}\frac{
d_{\lambda^*}}{d_\lambda}\,\Id_{d_\lambda}.$$
If $l(\l)\ge n-M$, we have
$$\widehat{\xi_M}(\lambda)=\binom{M}{\,n-l(\lambda)\,}\frac{
d_{\lambda^\bullet}
}{d_\lambda}\,\Id_{d_\lambda},\quad\widehat{\xi_M^c}(\lambda)=-\binom{M}{\,n-l(\lambda)\,}\frac{
d_{\lambda^\bullet}
}{d_\lambda}\,\Id_{d_\lambda}.$$
\end{thm}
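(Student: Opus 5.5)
The plan is to compute $\widehat{\xi_M}(\lambda)$ by averaging characters over a Young subgroup and then counting semistandard tableaux. First, $\xi_M$ and $\xi_M^c$ are class functions on $\S_n$. The law of $S$ is invariant under relabeling $[n]$, so conjugating a $\xi_M$-distributed permutation by any $\sigma\in\S_n$ gives the same law. Schur's lemma then gives $\widehat{\xi_M}(\lambda)=\frac{1}{d_\lambda}\E_{\xi_M}[\chi_\lambda(g)]\,\Id_{d_\lambda}$. By conjugation invariance we may condition on $S=\{n-M+1,\dots,n\}$. Then $g$ is uniform on $\A_{n-M}$, embedded in $\S_{n-M}\times\S_1^{M}\subset\S_n$, the subgroup that fixes every point of $S$.

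Next, write the uniform measure on $\A_{n-M}$ as $\frac{1+\sgn_{n-M}(g)}{(n-M)!}$ on $\S_{n-M}$, and the uniform measure on $\A^c_{n-M}$ as $\frac{1-\sgn_{n-M}(g)}{(n-M)!}$. This is valid because $n-M\ge 2n/3\ge 2$, so $\A_{n-M}$ has index two. Hence
\[
\E_{\xi_M}[\chi_\lambda]=\langle \Res^{\S_n}_{\S_{n-M}}\chi_\lambda,\bbone\rangle+\langle \Res^{\S_n}_{\S_{n-M}}\chi_\lambda,\sgn_{n-M}\rangle ,
\]
and $\E_{\xi_M^c}[\chi_\lambda]$ is the same expression with a minus sign. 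Since $\S_{n-M}\times\S_1^M$ acts on the last $M$ points trivially, Frobenius reciprocity identifies the first term with the multiplicity of $S^\lambda$ in $\Ind_{\S_{n-M}\times\S_1^M}^{\S_n}\bbone=M^{\mu}$, where $\mu=(n-M,1^M)$. By Young's rule this multiplicity is $K_{\lambda,\mu}$. For the second term, $\sgn_n$ restricts to $\sgn_{n-M}$ on this subgroup. So the second term equals $\langle\Res(\chi_\lambda\cdot\sgn_n),\bbone\rangle=K_{\lambda',\mu}$, using $S^{\lambda'}\cong S^\lambda\otimes\sgn_n$. This proves the first formula.

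It remains to evaluate the Kostka numbers. A semistandard tableau of shape $\lambda$ and content $\mu$ has its $n-M$ ones in the first row, so $K_{\lambda,\mu}=0$ unless $\lambda_1\ge n-M$. When $\lambda_1\ge n-M$, the remaining entries $2,\dots,M+1$ are distinct and fill the skew shape $\lambda/(n-M)$ as a standard tableau. This skew shape has two parts: the cells of row $1$ in columns $n-M+1,\dots,\lambda_1$, and the diagram of $\lambda^*$. The key check is that these parts share no column, which holds because $\lambda_2\le n-\lambda_1\le M<n-M+1$ (this uses $M\le n/3$). For a disjoint union of a horizontal strip of length $M-(n-\lambda_1)$ and the shape $\lambda^*$, the number of standard fillings is $\binom{M}{n-\lambda_1}d_{\lambda^*}$: choose which labels go to $\lambda^*$, then fill the strip in increasing order.

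In the same case, $\lambda'$ has first row $l(\lambda)\le n-\lambda_1+1\le M+1<n-M$, so $K_{\lambda',\mu}=0$. This gives the stated formula for both $\xi_M$ and $\xi_M^c$ with the same sign. Applying the same argument to $\lambda'$ handles the case $l(\lambda)\ge n-M$. There $K_{\lambda,\mu}=0$ and $K_{\lambda',\mu}=\binom{M}{n-l(\lambda)}d_{(\lambda')^*}=\binom{M}{n-l(\lambda)}d_{\lambda^\bullet}$, using $d_{\alpha'}=d_\alpha$, and the sign is $+$ for $\xi_M$ and $-$ for $\xi_M^c$. If neither $\lambda_1\ge n-M$ nor $l(\lambda)\ge n-M$ holds, both Kostka numbers vanish. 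The only delicate point is the disconnectedness of the skew shape, which is where $M\le n/3$ is used; everything else is standard.
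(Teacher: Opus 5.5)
Your proposal is correct and follows essentially the same route as the paper. Your splitting of the uniform measure on $\A_{n-M}$ as $(1\pm\sgn_{n-M})/(n-M)!$ is the character-level form of the paper's decomposition $\Ind_H^{\S_\mu}\bbone_H\cong\bbone_{\S_\mu}\oplus\widetilde{\sgn}$, and the rest is the same Frobenius reciprocity, Young's rule and sign-twist argument. The only difference is that you compute the Kostka numbers directly via the disconnected skew shape $\lambda/(n-M)$, correctly identifying where $M\le n/3$ enters, whereas the paper cites Jain--Sawhney for that step.
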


\begin{proof}

We will only prove the theorem for $\xi_M$, since the proof for $\xi_M^c$ is analogous.

\textbf{Step 1: rewrite as a subgroup average.}
Denote
\[
H:=\S_{\{1\}}\times \cdots \times \S_{\{M\}}\times \A_{\{M+1,\dots,n\}}.
\]
By conjugation-invariance of $\chi_\lambda$, we may write $\widehat{\xi_M}(\lambda)=a_\lambda\Id_{d_\lambda}$, where
$$
a_\lambda=\frac{1}{d_\lambda}\cdot\sum_{\sigma\in \mathfrak{S}_n}\xi_M(\sigma)\chi_\lambda(\sigma)=
\frac{1}{d_\lambda}\cdot\frac{1}{|\A_{n-M}|}\sum_{\sigma\in H}\chi_\lambda(\sigma).
$$
Let $\bbone_H$ denote the trivial character of $H$. By Frobenius reciprocity, we have
$$
\frac{1}{|\A_{n-M}|}\sum_{\sigma\in H}\chi_\lambda(\sigma)=\left\langle\bbone_H,\Res_H^{\S_n}\chi_\lambda\right\rangle_H=
\left\langle \Ind_H^{\mathfrak{S}_n}\bbone_H,\ \chi_\lambda\right\rangle_{\mathfrak{S}_n}.
$$
Therefore, we have 
\[
a_\lambda
=
\frac{1}{d_\lambda}\left\langle \Ind_H^{\mathfrak{S}_n}\bbone_H,\ \chi_\lambda\right\rangle_{\mathfrak{S}_n}.
\]
Thus, it remains to identify the coefficient of $\chi_\l$ when we write $\Ind_H^{\mathfrak{S}_n}\bbone_H$ as a linear sum of the form $\chi_\alpha$ with $\alpha\vdash n$, which is what we will do in the following.

\textbf{Step 2: split $\Ind_H^{\mathfrak{S}_n}\bbone_H$ into two induced pieces.}
Let
\[
\S_\mu:=\S_{\{1\}}\times\cdots\times \S_{\{M\}}\times \S_{\{M+1,\dots,n\}},
\]
the Young subgroup corresponding to $\mu=(n-M,1^M)$. Then $H\subset \S_\mu$ is a subgroup of index $2$, so the induced representation satisfies
\[
\Ind_H^{\S_\mu}\bbone_H \;\cong\; \bbone_{\S_\mu} \oplus \widetilde{\sgn},
\]
where $\widetilde{\sgn}$ is the inflation of the sign representation from the factor $\S_{\{M+1,\dots,n\}}$ to $\S_\mu$.
By transitivity of induction,
\begin{equation}\label{eq: summand of induction}
\Ind_H^{\mathfrak{S}_n}\bbone_H
\cong
\Ind_{\S_\mu}^{\mathfrak{S}_n}\bbone_{\S_\mu}
 \oplus
\Ind_{\S_\mu}^{\mathfrak{S}_n}\widetilde{\sgn}.
\end{equation}

\textbf{Step 3: identify multiplicities using Young's rule and tensoring by the sign representation.}
Recall that Young's rule gives the decomposition
\[
\Ind_{\S_\mu}^{\mathfrak{S}_n}\bbone_{\S_\mu}
=
\sum_{\alpha\vdash n} K_{\alpha,\mu}\,\chi_\alpha,
\]
so the multiplicity of $\chi_\lambda$ in the first summand of \eqref{eq: summand of induction} is $K_{\lambda,\mu}$.

Next, observe that $\widetilde{\sgn}=\Res_{\S_\mu}^{\S_n}\sgn_n$ (since the $\S_{\{i\}}$ factors are
trivial for all $1\le i\le M$), hence
$$
\Ind_{\S_\mu}^{\mathfrak{S}_n}\widetilde{\sgn}
=
\Ind_{\S_\mu}^{\mathfrak{S}_n}\left(\Res_{\S_\mu}^{\S_n}\sgn_n\right)
\cong
\left(\Ind_{\S_\mu}^{\mathfrak{S}_n}\bbone_{\S_\mu}\right)\otimes \sgn_n.
$$
Notice that tensoring an irreducible $\mathfrak{S}_n$-representation with $\sgn_n$ conjugates the partition, i.e.,
$\chi_\alpha\otimes \sgn_n=\chi_{\alpha'}$. Therefore,
\[
\Ind_{\S_\mu}^{\mathfrak{S}_n}\widetilde{\sgn}
=
\sum_{\alpha\vdash n} K_{\alpha,\mu}\,\chi_{\alpha'},
\]
so the multiplicity of $\chi_\lambda$ in the second summand of \eqref{eq: summand of induction} is $K_{\lambda',\mu}$.
Combining the two summands yields
\[
\left\langle \Ind_H^{\mathfrak{S}_n}\bbone_H,\ \chi_\lambda\right\rangle_{\mathfrak{S}_n}
=
K_{\lambda,\mu}+K_{\lambda',\mu},
\]
and hence
$$
\widehat{\xi_M}(\lambda)=\frac{K_{\lambda,\mu}+K_{\lambda',\mu}}{d_\lambda}\Id_{d_\lambda}.
$$

\textbf{Step 4: explicit form of the Kostka number.} As in the proof of \cite[Lemma 3.5]{jain2024hitting}, we have
$$
K_{\lambda,\mu}=\begin{cases}
0 & \l_1<n-M \\
d_{\lambda^*}\binom{M}{n-\lambda_1} & \l_1\ge n-M
\end{cases}.
$$
Similarly, we have the conjugation analogue given by
$$
K_{\lambda',\mu}=\begin{cases}
0 & l(\l)<n-M \\
\binom{M}{n-l(\lambda)}d_{\lambda^\bullet} & l(\l)\ge n-M
\end{cases}.
$$
Substituting into the Kostka-number formula gives the claimed explicit expression.
\end{proof}

\subsection{Proof of \Cref{thm: restate of pnkt and nu}}

We already have the necessary preparation to prove \Cref{thm: restate of pnkt and nu}. 

\begin{proof}[Proof of \Cref{thm: restate of pnkt and nu}]
For the rest of the proof, unless otherwise specified, the deductions work for both cases \eqref{item: dTV for large k} and \eqref{item: dTV for wide t}. In order to simplify computations, we consider the following truncated version $\mu_{n,k}^t$ of
$\nu_{n,k}^t$, which is defined in a similar fashion, except that the distribution of $M_t$ (i.e., the size of the random subset $S_t$) is given by
$$\mathbf{P}[M_t=x]=\mathbf{P}[\Pois(\gamma_t)=x]/\mathbf{P}[\Pois(\gamma_t)\le\log n],\quad x\le \log n.$$
Here, $\gamma_t:=\exp(-kt'/n)$ follows from \Cref{defi: construction of nu}. Due to our assumption on $t'$ for both cases \eqref{item: dTV for large k} and \eqref{item: dTV for wide t}, we have $\gamma_t=o(\sqrt{\log n})$, and therefore
$$\mathbf{P}[\Pois(\gamma_t)\ge \log n]\le \mathbf{P}[\Pois(\sqrt{\log n})\ge \log n]=n^{-\omega(1)}.$$
Therefore, it follows from the natural decoupling that $d_{TV}(\nu_{n,k}^t,\mu_{n,k}^t)=n^{-\omega(1)}$. Thus, it suffices to prove the same bound for $d_{TV}(P_{n,k}^{*t},\mu_{n,k}^t)$. Let $f_1$ be the probability density function of $P_{n,k}^{*t}$, and $f_2$ be the probability density function of $\mu_{n,k}^t$. Using the Cauchy–Schwarz inequality followed by the Plancherel formula, we have that (the dagger superscript refers to the conjugate transpose matrix)
\begin{align}
\begin{split}
4\cdot d_{TV}(P_{n,k}^{*t},\mu_{n,k}^t)^2&\le n!\sum_{\sigma\in\S_n}(f_1(\sigma)-f_2(\sigma))^2\\
&=\sum_{\l\vdash n}d_\l\Tr\left((\widehat f_1(\l)-\widehat f_2(\l))(\widehat f_1(\l)-\widehat f_2(\l))^\dag\right)\\
&=\sum_{\l\notin\L}d_\l^2|s_\l(k)|^{2t}+\sum_{\l\in\L}d_\l\Tr\left((\widehat f_1(\l)-\widehat f_2(\l))(\widehat f_1(\l)-\widehat f_2(\l))^\dag\right)\\
&=n^{-\omega(1)}+\sum_{\l\in\L}d_\l\Tr\left((\widehat f_1(\l)-\widehat f_2(\l))(\widehat f_1(\l)-\widehat f_2(\l))^\dag\right).\\
\end{split}
\end{align}
Here, the third line follows from \Cref{thm :zetaM_fourier_An} since $\widehat f_2(\l)=0$ when $\l\notin\L$, and the final line follows from \Cref{thm: removal}. Moreover, due to the symmetry under conjugation, the partitions in $\mathcal{L}_1$ and $\mathcal{L}_2$ contribute the same to the whole sum, and we only need to prove
\begin{equation*}
\sum_{\l\in\L_1}d_\l\Tr\left((\widehat f_1(\l)-\widehat f_2(\l))(\widehat f_1(\l)-\widehat f_2(\l))^\dag\right)=\begin{cases}
o(1) & \text{Case } \eqref{item: dTV for large k}\\
n^{2c_0-2-o(1)} & \text{Case } \eqref{item: dTV for wide t}
\end{cases}.
\end{equation*}
Now, suppose that $\l\in\mathcal{L}_1$, and denote $r:=n-\l_1$. On the one hand, by \Cref{prop: small r}, we have
\begin{equation}
\widehat f_1(\l)=s_\l(k)^t\Id_{d_\l}=\exp\left(-\frac{rkt}{n}-\frac{r(k+r)\log n}{2n}(1+o(1)+O(1/k))\right)\Id_{d_\l}.
\end{equation}
On the other hand, notice that
$$\mu_{n,k}^t\sim\begin{cases}
\sum_{l=0}^{\log n}\xi_l^c\cdot\frac{\mathbf{P}[\Pois(\gamma_t)=l]}{\mathbf{P}[\Pois(\gamma_t)\le\log n]} & t \text{ odd},k\text{ even}\\
\sum_{l=0}^{\log n}\xi_l\cdot\frac{\mathbf{P}[\Pois(\gamma_t)=l]}{\mathbf{P}[\Pois(\gamma_t)\le\log n]} & \text{else}
\end{cases},$$
where $\xi_l,\xi_l^c$ are the distributions in the statement of \Cref{thm :zetaM_fourier_An}. Therefore, it follows from \Cref{thm :zetaM_fourier_An} and \Cref{lem: dimension bound} that
\begin{align}
\begin{split}
\widehat f_2(\l)&=\mathbf{P}[\Pois(\gamma_t)\le\log n]^{-1}\cdot\sum_{l=0}^{\log n}\frac{d_{\lambda^*}\binom{l}{r}}{d_\lambda}\mathbf{P}[\Pois(\gamma_t)=l]\Id_{d_\l}\\
&=\left(1-\frac rn+O(n^{-2+o(1)})\right)\sum_{l=0}^{\log n}\binom{l}{r}\cdot r!n^{-r}\cdot\frac{\gamma_t^l e^{-\gamma_t}}{l !}\Id_{d_\l}\\
&=\left(1-\frac rn+O(n^{-2+o(1)})\right)\sum_{l=r}^{\log n}n^{-r}\cdot\frac{\gamma_t^l e^{-\gamma_t}}{(l-r)!}\Id_{d_\l}\\
&=\left(1-\frac rn+O(n^{-2+o(1)})\right)n^{-r}\cdot\gamma_t^r\cdot\mathbf{P}[\Pois(\gamma_t)\le \log n-r]\Id_{d_\l}\\
&=\left(1-\frac rn+O(n^{-2+o(1)})\right)\exp\left(-rkt/n\right)\mathbf{P}[\Pois(\gamma_t)\le\log n-r]\Id_{d_\l}.
\end{split}
\end{align}

We are close to done at this point. Suppose
$$\widehat f_1(\l)-\widehat f_2(\l)=\alpha_\l\Id_{d_\l},\quad \l\in\L_1,$$
so that 
$$\sum_{\l\in\L_1}d_\l\Tr\left((\widehat f_1(\l)-\widehat f_2(\l))(\widehat f_1(\l)-\widehat f_2(\l))^\dag\right)=\sum_{\l\in\L_1}d_{\l}^2|\alpha_{\l}|^2.$$
We break the above sum into two parts, depending on whether $r\ge\frac12\log n$ or not.
\begin{enumerate}
\item Suppose $r\ge \frac 12\log n$. In this case, it is clear that
$$\frac{r(k+r)\log n}{2n}(1+o(1)+O(1/k))\ge o(1)$$ for both cases \eqref{item: dTV for large k} and \eqref{item: dTV for wide t}. This implies
$$\widehat f_1(\l)\le\exp(-rkt/n)(1+o(1)).$$
Therefore, we can use the rough bound $\widehat f_1(\l)-\widehat f_2(\l)=\alpha_\l\Id_{d_\l}$ where $|\alpha_\l|\le 3\exp(-rkt/n)$, along with the dimension bound $d_\l^2\le n^{2r}/r!$ in \Cref{lem: dimension bound} to see that
\begin{align}
\begin{split}
\sum_{n-\l_1\in[\frac 12\log n,\log n]}d_\l^2|\alpha_\l|^2&\le9\sum_{r\in[\frac 12\log n,\log n]}\sum_{\l_1=n-r}\exp(-2rkt/n)\frac{n^{2r}}{r!}\\
&\le O(\log n)\cdot\max_{r\in[\frac 12\log n,\log n]}\frac{\exp(-2rkt'/n)}{r!}\exp\left(\pi\sqrt{2r/3}\right)\\
&= n^{-\omega(1)}.
\end{split}
\end{align}
Here, in the last inequality, we used Stirling's formula and the assumption $|2kt'/n|= \log\log n-\omega(1)$ for both cases \eqref{item: dTV for large k} and \eqref{item: dTV for wide t}.

\item Suppose $r<\frac 12\log n$. In this case, we have 
$$\mathbf{P}[\Pois(\gamma_t)\le \log n-r]\ge \mathbf{P}\left[\Pois(\sqrt {\log n})\le \frac 12\log n\right]\ge1-n^{-\omega(1)},$$ 
and therefore
$$\widehat f_2(\l)=\left(1-\frac rn+O(n^{-2+o(1)})\right)\exp\left(-rkt/n\right).$$
Also, by \Cref{lem: dimension bound}, we have
\begin{equation}\label{eq: estimate of sum of d_l^2}
\sum_{\l_1=n-r}d_\l^2\le \binom{n}{r}^2\sum_{\mu\vdash r}d_\mu^2=\binom{n}{r}^2\cdot r!\le\frac{n^{2r}}{r!}.
\end{equation}

For case \eqref{item: dTV for large k}, we split the interval $[1,\frac12 \log n)$ into two smaller parts. If $r<\min\{\sqrt{\frac{n}{k\log n}},\frac 12\log n\}$, we have
$$\frac{r(k+r)\log n}{2n}(1+o(1)+O(1/k))=o(1),$$
and therefore
\begin{align*}
\begin{split}
\alpha_\lambda&=\exp(-rkt/n)\left(\exp\left(\frac{-r(k+r)\log n}{2n}(1+o(1)+O(1/k))\right)-\left(1-\frac rn+O(n^{-2+o(1)})\right)\right)\\
&=O\left(\frac{r(k+r)\log n}{2n}\right)\exp(-rkt/n).
\end{split}
\end{align*}
If $\sqrt{\frac{n}{k\log n}}\le r<\frac 12\log n$, we have $k=\omega(1)$, which implies
$$\frac{r(k+r)\log n}{2n}(1+o(1)+O(1/k))=\frac{r(k+r)\log n}{2n}(1+o(1)),$$
and therefore
\begin{align*}
\begin{split}
\alpha_\lambda&=\exp(-rkt/n)\left(\exp\left(\frac{-r(k+r)\log n}{2n}(1+o(1)+O(1/k))\right)-\left(1-\frac rn+O(n^{-2+o(1)})\right)\right)\\
&=O(\exp(-rkt/n)).
\end{split}
\end{align*}
Summing up the above two cases and applying \eqref{eq: estimate of sum of d_l^2}, we have
\begin{align}
\begin{split}
\sum_{n-\l_1<\frac 12\log n}d_\l^2|\alpha_\l|^2&\le \sum_{n-\l_1<\sqrt{\frac{n}{k\log n}}}d_\l^2|\alpha_\l|^2+\sum_{\sqrt{\frac{n}{k\log n}}\le n-\l_1<\frac 12\log n}d_\l^2|\alpha_\l|^2\\
&= \sum_{r<\sqrt{\frac{n}{k\log n}}}O\left(\frac{r(k+r)\log n}{2n}\right)^2\frac{n^{2r}}{r!}\exp(-2rkt/n)\\
&+\sum_{\sqrt{\frac{n}{k\log n}}\le r<\frac 12\log n}\frac{n^{2r}}{r!}O(\exp(-2rkt/n))\\
&\le \sum_{r\ge 0}O\left(\frac{r(k+r)\log n}{2n}\right)^2\frac{\exp(-2cr)}{r!}+\sum_{r\ge\sqrt{\frac{n}{k\log n}}}O\left(\frac{\exp(-2cr)}{r!}\right)\\
&= O\left(\frac{k^2(\log n)^2}{n^2}\right)+o(1)\\
&=o(1).
\end{split}
\end{align}

For case \eqref{item: dTV for wide t}, we have
$$\frac{r(k+r)\log n}{2n}(1+o(1)+O(1/k))=O(n^{c_0-1+o(1)}),$$
and therefore
\begin{align*}
\begin{split}
\alpha_\lambda&=\exp(-rkt/n)\left(\exp(O(n^{c_0-1+o(1)}))-\left(1-\frac rn+O(n^{-2+o(1)})\right)\right)\\
&\le n^{c_0-1+o(1)}\exp(-rkt/n).
\end{split}
\end{align*}
Applying \eqref{eq: estimate of sum of d_l^2}, we have
\begin{align}
\begin{split}
\sum_{n-\l_1<\frac 12\log n}d_\l^2|\alpha_\l|^2&\le n^{2c_0-2+o(1)}\sum_{r<\frac 12\log n}\frac{n^{2r}}{r!}\exp(-2rkt/n)\\
&\le n^{2c_0-2+o(1)}\sum_{r\ge 0}\frac{\exp(-2rkt'/n)}{r!}\\
&=n^{2c_0-2+o(1)}.
\end{split}
\end{align}
Here, we use the fact that $|2kt'/n|=\log \log n-\omega(1)$, which implies $\exp\exp(-2kt'/n)=n^{o(1)}$. 
\end{enumerate}

Combining our discussion of $r\in[1,\frac 12\log n)$ and $r\in[\frac 12\log n,\log n]$ above, we complete the proof.
\end{proof}

\section{From tractable measure to total variation distance}\label{sec: From tractable measure to total variation distance}

In this section, we prove the following theorems, which elaborate on \Cref{thm: window growing m} and \Cref{thm: window fixed m} in detail.

\begin{thm}\label{thm: restate of window growing m}
Let $(n_N)_{N\ge 1},(m_N)_{N\ge 1},(l_N)_{N\ge 1},(k_N)_{N\ge 1}$, and $(t_N)_{N\ge 1}$ be sequences of positive integers that satisfy the following:
\begin{enumerate}
\item $n_N=m_N\cdot l_N$ for all $N\ge 1$.
\item $\lim_{N\rightarrow\infty}l_N=\infty$, but $\lim_{N\rightarrow\infty}\frac{l_N}{\log n_N}=0$.
\item $k_N\ge 2$ for all $N\ge 1$, and $\limsup_{N\rightarrow\infty}\log n_N k_N\le c_0$, where $c_0\in[0,1)$ is an absolute constant.
\item $t_N=\frac{n_N}{k_N}(\log n_N-\frac 12\log l_N+c)$ for all $N\ge 1$, where $c\in\R$ is an absolute constant.
\end{enumerate}
Then, we have
$$\lim_{N\rightarrow\infty}d_{TV}(\sim_{l_N}\backslash P_{n_N,k_N}^{*t_N},\sim_{l_N}\backslash U_{\S_{n_N}})=2\Phi(\exp(-c)/2)-1.$$
\end{thm}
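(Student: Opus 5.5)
The plan has three steps: replace the shuffle by the auxiliary measure $\nu_{n,k}^t$, reduce the comparison on the quotient to a comparison of quotient fixed-point counts, and then compare two Poisson laws via a Gaussian approximation.

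\textbf{Step 1: switch to $\nu_{n,k}^t$.} With $t=\frac nk(\log n-\frac12\log l+c)$ we get $t'=\frac nk(c-\frac12\log l)$, so $|2kt'/n|\le \log l+O(1)$. Since $l=o(\log n)$, this gives $|2kt'/n|=\log\log n-\omega(1)$. Hence case \eqref{item: dTV for wide t} of \Cref{thm: restate of pnkt and nu} applies, and $d_{TV}(P_{n,k}^{*t},\nu_{n,k}^t)\le n^{c_0-1+o(1)}=o(1)$. Projecting to $\sim_l\backslash\S_n$ can only decrease total variation. It therefore suffices to compute $d_{TV}(\sim_l\backslash\nu_{n,k}^t,\sim_l\backslash U_{\S_n})$. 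Note that $\gamma_t=e^{-kt'/n}=\sqrt{l}\,e^{-c}$.

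The parity constraint in \Cref{defi: construction of nu} is harmless on the quotient. Since $l\ge2$, every coset contains permutations of both parities, because it is closed under a transposition swapping two identical cards. The same holds for every coset restricted to permutations fixing a given set $S$, provided some card type has two copies outside $S$. This fails only with probability $n^{-\omega(1)}$, because $M_t\le\log n$ with overwhelming probability. So, up to $o(1)$, we may replace $\A$ by $\S$ throughout.

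\textbf{Step 2: reduce to the number of quotient fixed points.} Call position $i$ a quotient fixed point of $\sigma$ if $\sigma(i)\equiv i\pmod m$, i.e.\ the card at position $i$ has its own type. Let $F$ denote the number of such points; this is well defined on cosets. For a coset $x$, its $\nu$-mass equals
\[
\mathbf E_S\Big[\,|\{\sigma\in x:\sigma|_S=\mathrm{id}\}|/(n-M)!\,\Big].
\]
Given $|S|=M$, the count inside the expectation is nonzero only when $S$ is contained in the set of quotient fixed points of $x$. When it is nonzero, it equals a product of factorials depending only on how $S$ meets the blocks $T_{i,l}$. Averaging over uniformly random $S$, I expect the ratio $\nu(x)/U(x)$ to equal $\mathbf E[\binom{F(x)}{M}/\binom nM\cdot(\text{explicit correction})]$. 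The correction should be $1+o(1)$ uniformly on the typical event $M\le\log n$, with no block hit twice. This makes the likelihood ratio asymptotically a function of $F(x)$ alone, so the total variation reduces to $d_{TV}$ between the laws of $F$ under the two measures, up to $o(1)$. I expect this step to be the main obstacle: the exact symmetry claim (conditional uniformity given $F$) must be made rigorous, and the correction factors must be controlled uniformly.

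\textbf{Step 3: Poisson and Gaussian comparison.} Under $U_{\S_n}$, each position is a quotient fixed point with probability $l/n$, so $\mathbf E F=l$. By Chen--Stein, $d_{TV}(F,\Pois(l))=o(1)$, with an error of order $l/n$-type dependencies, uniformly for $l=o(\log n)$. Under $\nu$, we have $F=M+F'$ up to $o(1)$. Here $M\sim\Pois(\sqrt l e^{-c})$, and $F'$ is the quotient fixed-point count of a uniform permutation of $[n]\setminus S$, which is $\approx\Pois(l)$ and asymptotically independent of $M$. Hence $F\approx\Pois(l+\sqrt l e^{-c})$. Finally, $\Pois(l+a\sqrt l)$ and $\Pois(l)$ with $a=e^{-c}$ and $l\to\infty$ satisfy a local CLT after centering by $l$ and scaling by $\sqrt l$. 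Their total variation therefore converges to
\[
d_{TV}(\mathcal N(a,1),\mathcal N(0,1))=2\Phi(a/2)-1=2\Phi(e^{-c}/2)-1,
\]
which is the claimed limit.
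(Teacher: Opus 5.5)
Your proposal is correct and follows the paper's proof. Step~1 is exactly the application of case \eqref{item: dTV for wide t} of \Cref{thm: restate of pnkt and nu}, together with the identification of $\sim_l\backslash\nu_{n,k}^t$ with $\sim_l\backslash\nu_{n,k}^{\mathrm{sym},t}$, which you justify more carefully than the paper via a transposition of two identical cards outside $S$. Step~2 is the paper's observation that both laws are conditionally uniform given the quotient fixed-point set, combined with non-clustering. Step~3 is the same Poisson comparison followed by a local limit theorem.

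One correction to Step~2. For a coset $x$ with non-clustered $F(x)$, the ratio is exactly $\nu(x)/U(x)=\sum_{M}\mathbf P[M_t=M]\,|F(x)|^{\underline M}/l^{M}$, which is essentially the $\Pois(l+\gamma_t)$-to-$\Pois(l)$ likelihood ratio $e^{-\gamma_t}(1+\gamma_t/l)^{|F(x)|}$. So the factor multiplying $\binom{|F(x)|}{M}/\binom nM$ is $n^{\underline M}/l^{M}$, not $1+o(1)$. Since that factor depends only on $M$, the ratio depends on $x$ only through $|F(x)|$, and your reduction holds exactly on non-clustered cosets rather than merely asymptotically.
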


\begin{thm}\label{thm: restate of window fixed m}
Let $l\ge 2$ be a fixed integer, $(n_N)_{N\ge 1},(m_N)_{N\ge 1},(k_N)_{N\ge 1}$, and $(t_N)_{N\ge 1}$ be sequences of positive integers that satisfy the following:
\begin{enumerate}
\item $n_N=m_N\cdot l$ for all $N\ge 1$.
\item $\lim_{N\rightarrow\infty}m_N=\infty$.
\item $k_N\ge 2$ for all $N\ge 1$, and $\lim_{N\rightarrow\infty}\frac{k_N\log n_N}{n_N}=0$.
\item $t_N=\frac{n_N}{k_N}(\log n_N+c)$ for all $N\ge 1$, where $c\in\R$ is an absolute constant.
\end{enumerate}
Then, we have
$$\lim_{N\rightarrow\infty}d_{TV}(\sim_{l}\backslash P_{n_N,k_N}^{*t_N},\sim_{l}\backslash U_{\S_{n_N}})=d_{TV}(\Pois(l+\exp(-c)),\Pois(l)).$$
\end{thm}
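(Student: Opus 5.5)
The plan is to reduce to the auxiliary measure and then compute an explicit likelihood ratio on the quotient. Since $t'_N=t_N-\frac{n_N\log n_N}{k_N}=cn_N/k_N$ and $k_N\log n_N/n_N\to 0$, case \eqref{item: dTV for large k} of \Cref{thm: restate of pnkt and nu} gives $d_{TV}(P^{*t}_{n,k},\nu^t_{n,k})=o(1)$. The quotient map $\sigma\mapsto\sim_l\backslash\sigma$ can only decrease total variation. Hence it suffices to show
$$d_{TV}(\sim_l\backslash\nu^t_{n,k},\sim_l\backslash U_{\S_n})\to d_{TV}(\Pois(l+e^{-c}),\Pois(l)).$$
Note that here $\gamma_t=e^{-c}+o(1)$ because of the floor in \Cref{defi: construction of nu}.

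\textbf{Step 1: identify the quotient with words.} The coset of $\sigma$ is determined by the word $w(i):=\sigma(i)\bmod m$, $i\in[n]$, which uses each residue exactly $l$ times. Under $U_{\S_n}$ all such words are equally likely, each with probability $(l!)^m/n!$. Call $i$ a \emph{quotient fixed point} if $w(i)\equiv i\pmod m$, and let $F(w)$ be the set of quotient fixed points.

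Under $\nu$, condition on $S$ with $|S|=s$. Let $s_j:=|S\cap T_{j,l}|$ be the number of points of $S$ of type $j$. Because $l\ge2$, the subgroup $\S_l\times\cdots\times\S_l$ contains a transposition of two equal-type values. Left-multiplying by it swaps parity without changing $w$. This holds as long as some type has two copies outside $S$, which is automatic once $s<n-m$. Therefore the parity restriction in $\nu$ is invisible on the quotient. Given $S$, the word is that of a uniform permutation of $[n]\setminus S$, with $w|_S$ forced to be the identity residues. This yields
$$\frac{\nu(w)}{U(w)}=\sum_{S\subset F(w)}\mathbf P[M_t=|S|]\,|S|!\prod_{j=1}^m\frac{(l-s_j)!}{l!}.$$

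\textbf{Step 2: approximate the likelihood ratio.} When the points of $S$ have distinct types, the product $\prod_j (l-s_j)!/l!$ equals $l^{-s}$. Using $\mathbf P[M_t=s]\approx e^{-\gamma}\gamma^s/s!$, the ratio becomes
$$L(w)\approx\sum_{s}\binom{|F(w)|}{s}e^{-\gamma}(\gamma/l)^s=e^{-\gamma}(1+\gamma/l)^{|F(w)|}.$$
This is exactly $\frac{\mathbf P[\Pois(l+\gamma)=x]}{\mathbf P[\Pois(l)=x]}$ at $x=|F(w)|$. Consequently
$$d_{TV}=\mathbb E_U[(1-L)^+]\approx \mathbb E\Big[\big(1-\tfrac{\mathbf P[\Pois(l+\gamma)=X]}{\mathbf P[\Pois(l)=X]}\big)^+\Big],\qquad X=|F(w)|\ \text{under }U.$$

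\textbf{Step 3: Poisson limit under the uniform measure.} I would show that $|F(w)|\to\Pois(l)$ in distribution under the uniform word. The method is factorial moments: $\mathbb E\binom{|F|}{s}\to l^s/s!$, since each position is a quotient fixed point with probability $l/n$ and any fixed $s$ positions are asymptotically independent. The resulting bounded functional $(1-\text{ratio})^+$ then converges, and with $\gamma\to e^{-c}$ this gives the claimed limit.

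The main obstacle is making Step 2 rigorous, because $L$ is a nonlinear function of the random set $F(w)$. Two kinds of error must be controlled. First, there are configurations where $F(w)$ contains two positions of the same type, so that some $s_j\ge2$; second, there are the truncation corrections in $\mathbf P[M_t=s]$. I would first restrict to the event $\{|F(w)|\le K\}$ with all types in $F(w)$ distinct, whose uniform probability is $1-o_K(1)-O(K^2/m)$. On this event the formula for $L$ is exact up to a factor $1+o(1)$. Off the event, I would use $0\le(1-L)^+\le1$, together with the fact that the event has small probability under $\nu$ as well. Under $\nu$ this follows because $M_t$ is tight and $|F|-|S|$ is again approximately $\Pois(l)$.
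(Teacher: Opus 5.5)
Your proposal is correct, but it reaches the limit differently from the paper. Both arguments start the same way. Case (1) of \Cref{thm: restate of pnkt and nu}, together with monotonicity of total variation under the quotient map, reduces the problem to comparing $\sim_l\backslash\nu_{n,k}^t$ with $\sim_l\backslash U_{\S_n}$. You also give the transposition argument for why parity is invisible on the quotient, which the paper only asserts. After that the routes split.

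The paper argues structurally. Conditionally on the quotient fixed-point set $F=T$, the two laws agree, so it compares the laws of $F$. Non-clustering and symmetry then reduce this to the laws of $|F|$. Finally, \Cref{lem: non cluster and Poisson} gives $|F|\approx M_t+\Pois(l)\approx\Pois(l+\gamma_t)$ under $\nu$ and $|F|\approx\Pois(l)$ under $U$; this is a Poisson approximation the paper cites rather than proves.

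You instead compute the density $L=\nu/U$ on words exactly. It depends only on $F(w)$, which is the paper's conditional-uniformity observation in quantitative form. On non-clustered words it equals $e^{-\gamma}(1+\gamma/l)^{|F(w)|}$ up to the $1+o(1)$ truncation factor, which is precisely the likelihood ratio of $\Pois(l+\gamma)$ against $\Pois(l)$.

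This buys you something concrete. Since $d_{TV}=\mathbb{E}_U[(1-L)^+]$, you need the Poisson limit of $|F|$ only under the uniform measure, which is an elementary factorial-moment computation. You never need a Poisson approximation under $\nu$ or the convolution $M_t+\Pois(l)$. Your route also shows transparently why the profile is a distance between Poisson laws.

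Your final remark about the bad event having small $\nu$-probability is unnecessary. Because $0\le(1-L)^+\le 1$ is integrated against $U$, only $U(G^c)$ matters.

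The paper's route gives quantitative $n^{-1+o(1)}$ errors. It is also set up to be reused directly for the growing-$l$ regime. Your density formula would adapt there as well, replacing the Poisson limit with a CLT for $|F|$ under $U$.
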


As in \Cref{sec: Approximating the shuffling with an explicitly tractable measure}, for notational convenience, we suppress the subscript $N$ for the rest of this section.

\begin{defi}
Let $n=ml$. We define the \emph{fixed point set} $F_{n,l}:\S_n\rightarrow 2^{[n]}$ as follows. For $\sigma\in\S_n$, let
$$F_{n,l}(\sigma):=\{i\in [n]:\ \sigma(i)\equiv i \pmod m\}.$$
The function $F_{n,l}$ naturally descends to a well-defined function on $\sim_l\backslash\S_n$, which we still denote by $F_{n,l}$. We say that an element $\sigma\in\S_n$ is \emph{clustered} if $|F_{n,l}(\sigma)\cap T_{i,l}|\ge 2$ for some $1\le i\le m$.
\end{defi}

\begin{defi}\label{defi: construction of symmetric nu}
Retain the notation and the sampling procedure from \Cref{defi: construction of nu}. We define a probability measure $\nu_{n,k}^{\mathrm{sym},t}$ on $\S_n$ by following exactly the same construction as for $\nu_{n,k}^t$, except that in the final step, after sampling $S_t$, we choose a uniformly random element of $\S_{[n]\backslash S_t}$ and then view it as an element of $\S_n$ by fixing every element of $S_t$. We denote by $\nu_{n,k}^{\mathrm{sym},t}$ the law of the resulting random permutation.
\end{defi}

Recall that $\nu_{n,k}^t$ is a measure over $\A_n$ or $\A_n^c$, but we can regard it as a measure in $\S_n$ where the extended part has zero measure. In this way, we write $\sim_l\backslash\nu_{n,k}^t$ as the pushforward measure of $\nu_{n,k}^t$ over $\sim_l\backslash\S_n$. Similarly, we write $\sim_l\backslash\nu_{n,k}^{\sym,t}$ for the pushforward measure of $\nu_{n,k}^{\sym,t}$ over $\sim_l\backslash\S_n$. When $l\ge 2$, it is clear that $\sim_l\backslash\nu_{n,k}^t$ and $\sim_l\backslash\nu_{n,k}^{\sym,t}$ are identical.

\begin{lemma}\label{lem: non cluster and Poisson}
Let $n=ml$, such that $2\le l\le n^{o(1)}$. Moreover, suppose that $x=n^{o(1)}$. Sample a uniformly random subset $T$ of size $x$ in $[n]$, then sample a uniformly random element $\sigma\in\S_{[n]\backslash T}$ and view it as an element of $\S_n$ by fixing all of the elements in $T$. Then, we have
\begin{equation}\label{eq: non cluster}
\mathbf{P}(\sigma\text{ is clustered})\le n^{-1+o(1)},
\end{equation}
and
\begin{equation}\label{eq: fixed point poisson}
d_{TV}(|F_{n,k}(\sigma)|,x+\Pois(l))\le n^{-1+o(1)}.
\end{equation}
\end{lemma}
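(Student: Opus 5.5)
Here $|F_{n,k}(\sigma)|$ in \eqref{eq: fixed point poisson} should be read as $|F_{n,l}(\sigma)|$. The plan is to condition on $T$ and decompose
\[
F_{n,l}(\sigma)=T\sqcup F',\qquad F':=\{i\in[n]\setminus T:\ \sigma(i)\equiv i\pmod m\},
\]
where $\sigma':=\sigma|_{[n]\setminus T}$ is uniform on $\S_{[n]\setminus T}$ with $n':=n-x=n(1-n^{-1+o(1)})$. Then $|F_{n,l}(\sigma)|=x+|F'|$, and both claims reduce to statements about $F'$ and about $T$.

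\textbf{Clustering.} A cluster in class $T_{i,l}$ means two distinct points $a,b\in T_{i,l}$ that both lie in $F_{n,l}(\sigma)$. I would union bound over the three possible types of pair.
\begin{itemize}
\item If $a,b\in T$, the expected number of such pairs is $m\binom{l}{2}\binom{x}{2}/\binom n2\le x^2 l/n=n^{-1+o(1)}$, using $x,l=n^{o(1)}$.
\item If $a\in T$ and $b\notin T$, then $b$ must satisfy $\sigma(b)\in T_{i,l}\setminus T$. This has probability at most $l/n'$ for each such $b$. There are at most $x\cdot l$ choices of $(a,b)$, so the total is at most $x l^2/n'=n^{-1+o(1)}$.
\item If $a,b\notin T$, then $\sigma(a),\sigma(b)\in T_{i,l}$. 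This has probability at most $l^2/(n'(n'-1))$. The number of pairs is at most $m l^2=nl$, so the total is at most $l^3/n=n^{-1+o(1)}$.
\end{itemize}
This proves \eqref{eq: non cluster}.

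\textbf{Poisson approximation.} We need $d_{TV}(|F'|,\Pois(l))\le n^{-1+o(1)}$, which gives the claim by shifting by $x$. Write $|F'|=\sum_{i\notin T}I_i$ with $I_i:=\mathbf 1\{\sigma(i)\in T_{[i],l}\setminus T\}$. Then
\[
\mathbf E I_i=\frac{l-|T\cap T_{[i],l}|}{n'},\qquad \mathbf E|F'|=\frac{\sum_j (l-t_j)^2}{n'},\quad t_j:=|T\cap T_{j,l}|.
\]
Off the clustered event for $T$ (which has probability $n^{-1+o(1)}$ by the first bullet above), every $t_j\le1$. In that case
\[
\mathbf E|F'|=\frac{ml^2-(2l-1)x}{n-x}=l+O(xl/n)=l+n^{-1+o(1)}.
\]
The Poisson parameter shift therefore costs $n^{-1+o(1)}$ in total variation.

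For the main estimate, I would use the Stein--Chen method with the standard size-bias coupling for uniform permutations. To realize $\sigma(i)=j$, compose $\sigma$ with the transposition swapping $\sigma(i)$ and $j$. The resulting bound has the form
\[
d_{TV}\le \min(1,\lambda^{-1})\sum_i\Bigl(p_i^2+\sum_{i'\ne i}p_i\,\mathbf E|I_{i'}-I_{i'}^{(i)}|\Bigr).
\]
The coupling changes at most two values of $\sigma$, so each $I_{i'}$ is affected with probability $O(l/n)$ beyond the $O(1)$ directly touched indices. Each term is then $O(l^2/n)$ summed over $i$, giving $d_{TV}\le O(l^2/n)=n^{-1+o(1)}$.

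Alternatively, one can work with the permutation matrix of $\sigma'$ in the "block" sense. $|F'|$ counts the $i\notin T$ with $\sigma'(i)$ in the same residue class, so it is a permanent-type statistic. The classical Chen bound for "matches in a random permutation against a 0-1 matrix with row and column sums $\le l$" gives error $O(l^2/n)$.

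\textbf{Main obstacle.} The main obstacle is carrying out the Stein--Chen coupling cleanly, that is, checking that the dependency error really is $O(\mathrm{poly}(l)/n)$ uniformly, rather than something like $l/\sqrt n$. I would cite the Chen (1975) / Barbour--Holst--Janson result for the matching problem with a general 0-1 matrix, which gives exactly this rate. The remaining steps are simple union bounds.
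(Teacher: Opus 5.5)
Your proposal is correct and follows essentially the paper's route: your three pair types (both points in $T$, one in $T$, neither in $T$) match the paper's three steps ($T$ meets each class $T_{i,l}$ at most once; classes disjoint from $T$ carry at most one quotient fixed point; classes meeting $T$ acquire no extra ones), packaged as a single first-moment count. The Poisson part is, as in the paper, a standard Chen--Stein argument, and you make it more explicit by computing the conditional mean $l-(l-1)x/(n-x)$. The only detail to tighten is the coupling: draw $j$ uniformly from $T_{[i],l}\setminus T$, so that $(\sigma(i)\ j)\circ\sigma$ has the law of $\sigma$ given $I_i=1$, and note that the only other index whose indicator can change is $\sigma^{-1}(j)$, which happens with probability $O(l/n)$.
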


\begin{proof}
First, we have 
\begin{equation}\label{eq: St not cluster}
\mathbf{P}(|T\cap T_{i,l}|\le 1\quad\forall 1\le i\le m)=\left(1-\frac ln\right)\left(1-\frac {2l}{n}\right)\cdots\left(1-\frac{(x-1)l}{n}\right)=1-n^{-1+o(1)}.
\end{equation}
Second, for all $1\le i\le m$ such that $T\cap T_{i,l}=\emptyset$, we have
$$
\mathbf{P}(F_{n,l}(\sigma)\cap T_{i,l}\ne\emptyset)=\left(1-\frac{l}{n-x}\right)\cdots\left(1-\frac{l}{n-x-l+1}\right)=1-\frac{l^2}{n}-n^{-2+o(1)},
$$
so that 
$$\mathbf{P}(F_{n,l}(\sigma)\cap T_{i,l}=\emptyset)=\frac{l^2}{n}+O(n^{-2+o(1)}).$$
Also, we have
$$\E[F_{n,l}(\sigma)\cap T_{i,l}]=\frac{l^2}{n-x}=\frac{l^2}{n}+O(n^{-2+o(1)}),$$
and this implies
$$\mathbf{P}(|F_{n,l}(\sigma)\cap T_{i,l}|\ge 2)\le n^{-2+o(1)}.$$
Summing up all $1\le i\le m$ such that $T\cap T_{i,l}=\emptyset$, we have
\begin{equation}\label{eq: sigma not cluster}
\mathbf{P}(|F_{n,l}(\sigma)\cap T_{i,l}|\ge 2,\quad\forall i\text{ such that }T\cap T_{i,l}=\emptyset)\le n^{-1+o(1)}.
\end{equation}
Third, for all $1\le i\le m$ such that $T\cap T_{i,l}\ne\emptyset$, we have
$$
\mathbf{P}(F_{n,l}(\sigma)\cap T_{i,l}\ne\emptyset)\ge\left(1-\frac{l}{n-x}\right)\cdots\left(1-\frac{l}{n-x-l+1}\right)=1-\frac{l^2}{n}-n^{-2+o(1)}=1-n^{-1+o(1)},
$$
and therefore
\begin{equation}\label{eq: sigma not intersect St}
\mathbf{P}(F_{n,l}(\sigma)\cap T_{i,l}\ne\emptyset,\quad\forall i\text{ such that }S_t\cap T_{i,l}\ne\emptyset)\le x\cdot n^{-1+o(1)}=n^{-1+o(1)}.
\end{equation}
The bounds in \eqref{eq: St not cluster}, \eqref{eq: sigma not cluster}, and \eqref{eq: sigma not intersect St} together complete the proof of \eqref{eq: non cluster}. The proof of \eqref{eq: fixed point poisson} is a standard Poisson approximation argument; see, for example, the papers from Arratia-Goldstein-Gordon \cite{arratia1989two,arratia1990poisson}.
\end{proof}

Now, we are ready for our proof of \Cref{thm: restate of window growing m} and \Cref{thm: restate of window fixed m}.

\begin{proof}[Proof of \Cref{thm: restate of window growing m}]
In this case, we have 
$$t'=\frac{n(c-\frac 12\log l)}{k}=-\frac{n(\log\log n-\omega(1))}{2k}$$ 
because $l=o(\log n)$. Therefore, applying the natural quotient map and using monotonicity of total variation distance under pushforward, we can obtain from case \eqref{item: dTV for wide t} of \Cref{thm: pnkt and nu} that 
$$
d_{\mathrm{TV}}(\sim_l\backslash P_{n,k}^{*t},\sim_l\backslash\nu_{n,k}^{\sym,t})=o(1).
$$
Hence it suffices to prove 
$$
d_{\mathrm{TV}}(\sim_l\backslash\nu_{n,k}^{\sym,t},\sim_l\backslash U_{\S_n})=2\Phi(\exp(-c)/2)-1+o(1).
$$
Now, let us sample $\tilde\sigma\in\sim_l\backslash \S_n$  with respect to the law of $\sim_l\backslash\nu_{n,k}^{\sym,t}$, and sample $\tilde\tau\in\sim_l\backslash \S_n$ with respect to the law of $\sim_l\backslash U_{\S_n}$.
Observe that for all $T\subseteq[n]$, the conditional law of $\tilde\sigma$ given the event $F_{n,k}(\tilde\sigma)=T$ is the same as the conditional law of $\tilde\tau$ under the event $F_{n,k}(\tilde\tau)=T$. Therefore, it suffices to prove
$$d_{TV}(F_{n,k}(\tilde\sigma),F_{n,k}(\tilde\tau))=2\Phi(\exp(-c)/2)-1+o(1).$$
Recall from our definition that $\gamma_t=\exp(-c+\frac 12\log l)=\sqrt{l}\exp(-c)$. Hence, applying \eqref{eq: fixed point poisson}, we have
$$\mathbf{P}(|F_{n,k}(\tilde\sigma)|\ge\log n)\le n^{-1+o(1)}+\mathbf{P}\left(\Pois\left(l+\sqrt l\exp(-c)\right)\ge \log n\right)= n^{-1+o(1)}, $$
$$\mathbf{P}(|F_{n,k}(\tilde\tau)|\ge\log n)\le n^{-1+o(1)}+\mathbf{P}(\Pois(l)\ge \log n)= n^{-1+o(1)}. $$
Therefore, by natural truncation it follows from \eqref{eq: non cluster} and \eqref{eq: fixed point poisson} that
\begin{align}
\begin{split}
d_{TV}(F_{n,k}(\tilde\sigma),F_{n,k}(\tilde\tau))&=d_{TV}(|F_{n,k}(\tilde\sigma)|,| F_{n,k}(\tilde\tau)|)+o(1)\\
&=d_{TV}\left(\Pois\left(l+\sqrt l\exp(-c)\right),\Pois(l)\right)+o(1)\\
&=2\Phi(\exp(-c)/2)-1+o(1).\\
\end{split}
\end{align}
Here, the third line is a straightforward consequence of $l=\omega(1)$, together with the one-crossing property and the classical local limit approximation for Poisson laws; see, for example, standard references on limit theorems such as Petrov \cite[Chapter VII]{petrov1975sums}.
\end{proof}

\begin{proof}[Proof of \Cref{thm: restate of window fixed m}]
Applying the natural quotient map and using monotonicity of total variation distance under pushforward, we obtain from case \eqref{item: dTV for large k} of \Cref{thm: pnkt and nu} that 
$$
d_{\mathrm{TV}}(\sim_l\backslash P_{n,k}^{*t},\sim_l\backslash\nu_{n,k}^{\sym,t})=o(1).
$$
Hence it suffices to prove 
$$
d_{\mathrm{TV}}(\sim_l\backslash\nu_{n,k}^{\sym,t},\sim_l\backslash U_{\S_n})
=
d_{\mathrm{TV}}(\Pois(l+\exp(-c)),\Pois(l))+o(1).
$$
Now, let us sample $\tilde\sigma\in\sim_l\backslash \S_n$  with respect to the law of $\sim_l\backslash\nu_{n,k}^{\sym,t}$, and sample $\tilde\tau\in\sim_l\backslash \S_n$ with respect to the law of $\sim_l\backslash U_{\S_n}$.
Observe that for all $T\subseteq[n]$, the conditional law of $\tilde\sigma$ under the event $F_{n,k}(\tilde\sigma)=T$ is the same as the conditional law of $\tilde\tau$ given the event $F_{n,k}(\tilde\tau)=T$. Therefore, it suffices to prove
$$d_{TV}(F_{n,k}(\tilde\sigma),F_{n,k}(\tilde\tau))=d_{\mathrm{TV}}(\Pois(l+\exp(-c)),\Pois(l))+o(1).$$
Recall from our definition that $\gamma_t=\exp(-c)$. Hence, applying \eqref{eq: fixed point poisson}, we have
$$\mathbf{P}(|F_{n,k}(\tilde\sigma)|\ge\log n)\le n^{-1+o(1)}+\mathbf{P}(\Pois(l+\exp(-c))\ge \log n)= n^{-1+o(1)}, $$
$$\mathbf{P}(|F_{n,k}(\tilde\tau)|\ge\log n)\le n^{-1+o(1)}+\mathbf{P}(\Pois(l)\ge \log n)= n^{-1+o(1)}. $$
Therefore, by natural truncation it follows from \eqref{eq: non cluster} and \eqref{eq: fixed point poisson} that
\begin{align}
\begin{split}
d_{TV}(F_{n,k}(\tilde\sigma),F_{n,k}(\tilde\tau))&=d_{TV}(|F_{n,k}(\tilde\sigma)|,| F_{n,k}(\tilde\tau)|)+o(1)\\
&=d_{TV}(\Pois(l+\exp(-c)),\Pois(l))+o(1).
\end{split}
\end{align}
This completes the proof.
\end{proof}

\bibliographystyle{plain}
\bibliography{references.bib}

\end{document}